\documentclass[a4paper,12pt]{article}

\usepackage[margin=1.1in]{geometry}
\usepackage{amsmath,amssymb,amsthm,mathtools}
\usepackage{algorithm}
\usepackage{comment}
\usepackage{xcolor}
\usepackage{hyperref}

\usepackage{booktabs,tabularx}

\usepackage{algpseudocode}
\newtheorem{definition}{Definition}
\newtheorem{theorem}{Theorem}

\newtheorem{remark}{Remark}
\newtheorem{lemma}{Lemma}

\def\x{{\mathbf{x}}}

\def\c{{\mathbf{c}}}
\def\g{{\mathbf{g}}}
\def\e{{\mathbf{e}}}

\def\a{{\mathbf{a}}}
\def\u{{\mathbf{u}}}
\def\v{{\mathbf{v}}}
\def\z{{\mathbf{z}}}
\def\w{{\mathbf{w}}}

\def\h{{\mathbf{h}}}
\def\y{{\mathbf{y}}}
\def\q{{\mathbf{q}}}
\def\p{{\mathbf{p}}}
\def\b{{\mathbf{b}}}

\def\X{{\mathbf{X}}}
\def\Y{{\mathbf{Y}}}
\def\A{{\mathbf{A}}}
\def\M{{\mathbf{M}}}
\def\I{{\mathbf{I}}}

\def\V{{\mathbf{V}}}

\def\U{{\mathbf{U}}}
\def\Q{{\mathbf{Q}}}
\def\P{{\mathbf{P}}}

\newcommand{\mA}{\mathcal{A}}
\newcommand{\R}{\mathcal{R}}
\newcommand{\mX}{\mathcal{X}}
\newcommand{\mV}{\mathcal{V}}

\newcommand{\mC}{\mathcal{C}}

\newcommand{\mK}{\mathcal{K}}
\newcommand{\mS}{\mathcal{S}}

\newcommand{\mB}{\mathcal{B}}
\newcommand{\mbS}{\mathbb{S}}
\newcommand{\mD}{\mathcal{D}}
\newcommand{\E}{\mathbb{E}}
\newcommand{\Id}{\textbf{I}}

\newcommand{\dist}{\textrm{dist}}
\newcommand{\nnz}{\textrm{nnz}}
\newcommand{\conv}{\textrm{conv}}

\newcommand{\rank}{\textrm{rank}}
\newcommand{\reals}{\mathbb{R}}

\newcommand{\supp}{\mathop{\mbox{\rm supp}}}
\newcommand{\dom}{\mathop{\mbox{\rm dom}}}

\DeclareMathOperator*{\argmin}{argmin}

\def\c{{\mathbf{c}}}
\def\s{{\mathbf{s}}}
\newcommand{\mY}{\mathcal{Y}}

\newcommand{\simplex}{\Delta}

\newcommand{\rr}{{\mathbf{r}}}

\newcommand{\ip}[2]{\left\langle #1,#2\right\rangle}
\newcommand{\norm}[1]{\left\lVert #1\right\rVert}

\title{Complexities of Weak Proximal Oracle Methods \\ for Composite Convex Optimization}
\author{Dan Garber\\  \small{dangar@technion.ac.il}}
\date{\vspace{-5pt} Faculty of Data and Decision Sciences \vspace{3pt} \\ Technion - Israel Institute of Technology}

\begin{document}
\maketitle

\begin{abstract}
We consider a standard convex composite optimization problem with either smooth or nonsmooth objective function, and under quadratic growth. In recent years, several works gave algorithms based on a \textit{weak proximal oracle} (WPO) that essentially match in oracle complexities  proximal (sub)gradient methods relying on exact prox operations. Importantly, such WPOs, which relax the strong optimality condition of the standard prox operator, may admit much more efficient implementation in terms of runtime when  optimal solutions have some sparse structure.  A question remained if such WPO-based methods can be accelerated (in the sense of Nesterov's accelerated gradient). In this work we provide a negative answer by establishing lower bounds against both deterministic and randomized methods. Thus, while WPOs can substantially reduce the cost of individual oracle calls, this comes with an inherent loss in oracle complexity. We also provide a new upper-bound for WPO-based nonsmooth convex composite optimization, nearly matching the proximal subgradient method. 

\end{abstract}

\section{Introduction}
We consider the standard composite model
\begin{align}\label{eq:optProb}
\min_{\x\in\mathbb{V}}\left\{F(\x):=f(\x)+\R(\x)\right\}.
\end{align}
Throughout, $\mathbb{V}$ is a finite-dimensional Euclidean space of ambient
dimension $n:=\dim(\mathbb{V})$.  We write $\ip{\cdot}{\cdot}$ and
$\norm{\cdot}$ for its inner product and induced Euclidean norm, respectively. For $\x\in\mathbb V$ and a
nonempty set $\mS\subseteq\mathbb V$ define $\dist(\x,\mS):=\inf_{\z\in\mS}\norm{\x-\z}$.

The function $f:\mathbb{V}\to\reals$ is closed and convex, and
$\R:\mathbb{V}\to\reals\cup\{+\infty\}$ is proper, closed, and convex.  Denote
$\mX^*:=\argmin_{\x\in\mathbb V}F(\x)$, $F^*:=\min_{\x\in\mathbb V}F(\x)$,
and assume that $\mX^*$ is nonempty.  Throughout, we assume that $F$
satisfies the $\alpha$-quadratic-growth condition, for some $\alpha>0$,
\begin{align}\label{eq:qg-condition}
 F(\x)-F^*
 \ge \frac{\alpha}{2}\dist(\x,\mX^*)^2,
 \qquad \x\in\dom(\R).
\end{align}

We distinguish between the following two fundamental regimes.
\begin{itemize}
\item In the \emph{smooth case}, $f$ is differentiable and $\beta$-smooth with $\beta \geq \alpha$.
\item In the \emph{nonsmooth case}, no smoothness assumption is imposed on
$f$.  %
\end{itemize}

Standard first-order methods for Problem \eqref{eq:optProb}, which treat the function $\R$ in closed-form, rely on an oracle for $\R$ that can compute a minimizer of the proximal problem:
\begin{align}\label{eq:proxProb}
\min_{\z\in\mathbb{V}}\left\{\lambda\Vert{\z-\y}\Vert^2 + \R(\z)\right\},
\end{align}
for any $\lambda \geq 0$ and $\y\in\mathbb{V}$ \cite{beck2009fast, nesterov2013gradient, beck2017first}.

However, in many problems of interest, implementing this proximal oracle may be inefficient in high-dimensions. We discuss this in more detail in Section \ref{sec:wpo-examples}, however, a clean example to keep in mind is when $\mathbb{V}$ is a space of real matrices and $\R$ is a nuclear norm regularizer (sum of singular values) or an indicator for the positive semidefinite cone. In both scenarios, solving  \eqref{eq:proxProb} amounts in worst case to a full singular value decomposition (SVD) / eigen-decomposition, which is prohibitive when the dimension is very large. %

One important family of alternative methods is based on the Frank-Wolfe algorithm \cite{frank1956algorithm, Jaggi13}, which concerns the case that $\R$ is an indicator for a convex and compact set, and instead of solving the potentially expensive Problem \eqref{eq:proxProb} (which reduces to Euclidean projection in this case), requires only linear optimization steps over the feasible domain. However, these methods are well known to suffer from slow sublinear convergence rates even under quadratic growth or strong convexity conditions, as opposed to proximal methods that can leverage such conditions towards obtaining faster rates. A well known lower bound for these methods in case that $f$ is smooth and strongly convex is given in \cite{Jaggi13}. For completeness, we bring a simple lower bound covering the case that $f$ is nonsmooth and strongly convex (Theorem \ref{thm:lmo-nonsmooth-simplex-lb} in the appendix) \footnote {we note that while there is active research on Frank-Wolfe-type methods that can leverage quadratic growth / strong convexity conditions, such results most often come with additional dependencies not present in proximal methods and consider very specific structures of $\R$, and are beyond the scope of this work}.

In recent years several works proposed methods that build on an oracle that relaxes the exact minimization of the prox operator in \eqref{eq:proxProb} towards obtaining an oracle that can benefit from the quadratic growth condition, yet may be more efficient to implement in terms of runtime  \cite{allen2017linear, garber2021improved, garber2019fast, garber2023faster, ding2020k, garber2025weak, garber2025first}. The following definition, which is our main object of interest in this work, captures the most common definition. The term \textit{weak proximal oracle} was originally coined in \cite{garber2019fast}.

\begin{definition}[weak proximal oracle]\label{def:wpo}
A map $\mA:\mathbb{V}\times\reals_+\rightarrow\dom(\R)$ is called a weak
proximal oracle for Problem~\eqref{eq:optProb} if, for every
$\y\in\mathbb{V}$ and $\lambda>0$, it outputs a point $\x\in\dom(\R)$ such
that for any $\x^*\in\mX^*$,
\begin{align}\label{eq:wpo-def}
 \lambda\norm{\x-\y}^2+\R(\x)
 \le
 \lambda\norm{\x^*-\y}^2+\R(\x^*).
\end{align}
\end{definition}

Condition \eqref{eq:wpo-def} strongly relaxes the optimality condition of the standard prox operation \eqref{eq:proxProb}, and simply requires the output to be competitive with the set of minimizers of the global Problem \eqref{eq:optProb}. 

In Section \ref{sec:wpo-examples} we survey in detail settings in which this oracle can indeed be implemented with potentially much better runtime than the exact prox operator. For now,  we shall only mention that for our clean example from before  of a matrix  nuclear norm regularizer or an indicator for the positive semidefinite cone, in case all optimal solutions to the global Problem \eqref{eq:optProb} have low rank (which is indeed the case in many applications of interest), a WPO will require only a low-rank matrix decomposition, instead of a full-rank one for the exact prox operation.

In this work we ask:
\begin{center}
\textit{What are the complexities of first-order methods for Problem \eqref{eq:optProb} (under the assumptions listed above), when the function $\R$ is accessible only through a weak proximal oracle?}
\end{center}

To make this question even more focused, we note that \cite{allen2017linear, garber2021improved, garber2019fast, garber2023faster, ding2020k, garber2025weak, garber2025first} essentially showed that when $f$ is smooth, an order of $O\left({\frac{\beta}{\alpha}\log\frac{1}{\epsilon}}\right)$ gradient computations and WPO calls suffices to find an $\epsilon$-approximate solution (in function value). This matches the complexities of the proximal gradient method but with a potentially much more efficient oracle.  However, an important question remained open: whether accelerated rates (in the sense of Nesterov's accelerated gradient method), that replace the $\beta/\alpha$ factor with its square root $\sqrt{\beta/\alpha}$, are possible?

In case $f$ is nonsmooth, \cite{garber2019fast} showed that when $f$ can be replaced with a suitable smooth approximation, $\tilde{O}\left({\frac{1}{\alpha\epsilon}}\right)$ WPO calls suffice (here we let $\tilde{O}$ hide log factors and we focus only on dependence on $\alpha, \epsilon$), which matches (up to log factors and in terms of $\alpha,\epsilon$) the proximal subgradient method.
However, this approach is not generic (since it requires a known smooth approximation). The MOPES method \cite{MOPES}, in case $\R$ is an indicator for a convex and compact set, requires only an order of $O\left({1/\sqrt{\alpha\epsilon}}\right)$ exact projections onto the feasible set \footnote{while \cite{MOPES} did not consider a quadratic growth assumption and gave a $O(\Vert{\x_0-\x^*}\Vert/\epsilon)$ projection complexity bound, where $\x^*$ is some optimal solution, the reduction to the quadratic growth case follows from a standard restarting argument}. This raises the question whether WPO-based methods for the nonsmooth case can be accelerated, replacing the $1/(\alpha\epsilon)$ dependence with only $1/\sqrt{\alpha\epsilon}$?

Unfortunately, our main results are negative: WPO-based methods cannot be accelerated and the terms $\beta/\alpha$ and $1/(\alpha\epsilon)$ in the smooth and nonsmooth case, respectively, cannot be improved beyond perhaps log factors. Hence, while weak proximal oracles can substantially reduce the cost of individual oracle calls, this comes with an inherent loss in oracle complexity.

On the positive side, in case $f$ is nonsmooth, and without assuming $\R$ is an indicator for a convex and compact set, we provide a generic upper-bound requiring $\tilde{O}\left({\frac{1}{\alpha\epsilon}}\right)$ WPO and first-order oracle calls.

Together, these results essentially characterize the oracle complexity of weak proximal methods under quadratic growth. Weak proximal access can yield substantially cheaper iterations than exact proximal access, but unlike exact proximal access it does not support accelerated oracle complexity in the worst case.

Before we can present our results in more detail we require two additional definitions of oracles for accessing $\R$. Let us first present these definitions and then discuss their purpose.

\begin{definition}[restricted proximal oracle]\label{def:rpo}
Let $\widehat{\mK}\subseteq\dom(\R)$ be a closed, but not necessarily convex,
set satisfying $\mX^*\subseteq\widehat{\mK}$.  A map
$\mA:\mathbb{V}\times\reals_+\rightarrow\widehat{\mK}$ is called a restricted
proximal oracle associated with $\widehat{\mK}$ if, for every
$\y\in\mathbb{V}$ and $\lambda>0$, it outputs a point in
\begin{align}
 \argmin_{\z\in\widehat{\mK}}
 \left\{\lambda\norm{\z-\y}^2+\R(\z)\right\}.
\end{align}
Since $\mX^*\subseteq\widehat{\mK}$, every restricted proximal oracle is a
weak proximal oracle in the sense of Definition~\ref{def:wpo}.
\end{definition}

\begin{definition}[$k$-fold sparse projection oracle]\label{def:spo}
Suppose $\R$ is the indicator function of a convex and compact set $\mK\subset\mathbb{V}$ whose set of vertices is denoted $\mV$. Suppose every $\x^*\in\mX^*$ can be written as a convex combination of at most $s$ vertices from $\mV$, where $s<<n$. For a positive integer $k$ such that $ks \leq n$, a \textit{$k$-fold sparse projection oracle} is an oracle that for every $\y\in\mathbb{V}$, outputs a point in
\begin{align}
 \argmin_{\z\in\mC_{sk}}\norm{\z-\y},
\end{align}
where $\mC_{sk}\subseteq\mK$ contains all points in $\mK$ that can be written as the convex combination of at most $s\cdot{}k$ vertices from $\mV$.

In particular, this oracle implements the restricted proximal oracle of
Definition~\ref{def:rpo} with $\widehat{\mK}=\mC_{sk}$.
\end{definition}

Let us now discuss why we have three different oracle definitions. As already mentioned above, these oracles form a hierarchy:  the $k$-fold sparse projection oracle (in case $\R$ is an indicator for a set) implies a restricted proximal oracle, which in turn implies a weak proximal oracle. Our upper bounds will rely on the weakest assumption --- the weak proximal oracle, while our examples for concrete efficient implementations, discussed in Section \ref{sec:wpo-examples}, will all in fact implement the stronger restricted proximal oracle. Finally, all of our lower bounds will hold under the stronger assumption of a $1$-fold sparse projection oracle (in particular they all establish hard instances in which $\R$ is an indicator for a convex set). Moreover, our deterministic lower bounds (i.e., that hold against any deterministic method) will allow the strongest assumption of a $k$-fold sparse projection oracle for $k > 1$. These are provided to show that even significantly increasing the strength of the oracle for $\R$ by allowing $k >1$ cannot have (in worst case) a dramatic effect on the rate of convergence, and also does not enable acceleration for moderate values of $k$.  

Table~\ref{tab:main-results} summarizes our main oracle-complexity results together with the
corresponding known upper bounds.
The lower bounds already hold in the more restrictive setting $\R=I_{\mK}$, when the feasible
set is accessed through a sparse projection oracle and $f$ is strongly convex. In contrast, our
upper bounds apply to a general convex regularizer $\R$ and an arbitrary weak proximal oracle,
and require only quadratic growth of $F$; in particular, the optimal solution need not be unique.
For comparison, the table also reports the standard projection-based upper bounds: accelerated
projected gradient in the smooth case and restarted MOPES in the nonsmooth case. Thus the table
highlights the gap between the weaker oracle models considered here and access to an exact prox operator. For ease of presentation, the table presents the bounds in simplified form, omitting typically-lower-order terms. Refer to the theorems in the sequel for precise statements.

One thing that is important to note is that while the deterministic lower bounds with a $k$-fold sparse projection oracle reported in the table decay with $1/k$, our constructions of such oracles detailed in Section \ref{sec:wpo-examples} will have complexity that typically scales at least linearly with $k$. Thus, these lower bounds essentially establish that there is nothing to gain from taking $k>1$.

\begin{table}[tp]

\refstepcounter{table}
\label{tab:main-results}

\centering
\begin{minipage}{0.92\textwidth}

\small
\raggedright
\textbf{Table~\thetable:}
Summary of the main oracle-complexity results and exact-oracle benchmarks.
\par
\vspace{6pt}

\centering
\setlength{\tabcolsep}{4pt}
\renewcommand{\arraystretch}{1.18}

\begin{tabularx}{\linewidth}{
@{}
>{\raggedright\arraybackslash}p{0.15\linewidth}
>{\raggedright\arraybackslash}p{0.25\linewidth}
>{\raggedright\arraybackslash}p{0.27\linewidth}
>{\raggedright\arraybackslash}X
@{}
}
\toprule
Regime
& Result
& Oracle
& \#Oracle calls
\\
\midrule

Smooth
& Deterministic lower (\textbf{new})
& $k$-fold sparse projection
& $\displaystyle
  \Omega\!\left(\frac{\beta}{\alpha k}\right)$
\\[2pt]

Smooth
& Randomized lower (\textbf{new})
& $1$-fold sparse projection
& $\displaystyle
  \Omega\!\left(
    \frac{\beta/\alpha}{\log(\beta/\alpha)}
  \right)$
\\[2pt]

Smooth
& WPO upper (known)
& Weak proximal oracle
& $\displaystyle
  O\!\left(
    \frac{\beta}{\alpha}
    \log\frac{1}{\epsilon}
  \right)$
\\[2pt]

Smooth
& Exact proximal upper (known)
& Exact proximal oracle
& $\displaystyle
  O\!\left(
    \sqrt{\frac{\beta}{\alpha}}\,
    \log\!\left(
      \frac{1}{\epsilon}
    \right)
  \right)$
\\

\midrule

Nonsmooth
& Deterministic lower (\textbf{new})
& $k$-fold sparse projection
& $\displaystyle
  \Omega\!\left(
    \frac{G^2}{\alpha\epsilon k}
  \right)$
\\[2pt]

Nonsmooth
& Randomized lower (\textbf{new})
& $1$-fold sparse projection
& $\displaystyle
  \Omega\!\left(
    \frac{G^2}
    {\alpha\epsilon\,\log(G^2/(\alpha\epsilon))}
  \right)$
\\[2pt]

Nonsmooth
& WPO upper (\textbf{new})
& Weak proximal oracle
& $\displaystyle
  O\!\left(
    \frac{G^2}{\alpha\epsilon} 
  \right)$
\\[2pt]

Nonsmooth
& Exact projection upper (known)
& Exact Euclidean projection
& $\displaystyle
  O\!\left(
    \frac{G}{\sqrt{\alpha\epsilon}}
  \right)$
\\

\bottomrule
\end{tabularx}

\vspace{5pt}

\footnotesize
\raggedright
The lower bounds  allow arbitrarily many first-order objective calls.
The lower bounds hold already when $\R=I_{\mK}$ and $f$ is strongly convex,
whereas the WPO upper bounds require only quadratic growth and apply to a
general convex regularizer $\R$. $G$ is a local upper-bound on norms of subgradients.
\end{minipage}
\end{table}

\subsection{Representative implementations of weak proximal oracles}
\label{sec:wpo-examples}

We discuss three concrete representative implementations of the restricted proximal
oracle of Definition~\ref{def:rpo}, and hence of a WPO.   The examples show that restricting the proximal problem to a
structured set containing all optimal solutions can result in a much more efficient procedure than implementing the standard exact proximal oracle. The first two examples were already used for designing efficient WPO-based algorithms in \cite{allen2017linear, garber2021improved, garber2019fast, garber2023faster, ding2020k, garber2025weak, garber2025first}, however, here we discuss more general constructions. The third example was previously considered in \cite{garber2021frank}. These examples are detailed below and also summarized in Table \ref{tab:wpo-examples}.

\paragraph{Sparse solutions with a symmetric regularizer.}
First consider $\mathbb{V}=\reals^n$.  Suppose that $\R$ is invariant under
coordinate permutations and, in addition, one of the following holds:
\begin{enumerate}
\item $\dom(\R)\subseteq\reals_+^n$; or
\item $\R$ is invariant under coordinate-wise sign changes.
\end{enumerate}
This setting includes, for example, symmetric norm
regularizers, indicators of permutation- and sign-symmetric sets, and sums
of such terms. %

Suppose that every $\x^*\in\mX^*$ satisfies
$\nnz(\x^*)\le s$, and for some $r\ge s$ define
\[
 \mC_r
 :=
 \left\{
 \x\in\dom(\R):\nnz(\x)\le r
 \right\}.
\]
The proximal problem over $\mC_r$ can be reduced to only $r$ coordinates.
For $T\subseteq[n]$, let $\I_T$ denote the matrix formed by the columns of
the identity matrix indexed by $T$, and let $\y_T$ denote the restriction
of $\y$ to $T$.

\begin{lemma}%
\label{lem:symmetric-sparse-projection}
Let $\R$ satisfy the symmetry conditions above, and let $s\le r\le n$.
Fix $\y\in\reals^n$ and $\lambda>0$.

If $\dom(\R)\subseteq\reals_+^n$, let $T$ contain the indices of the $r$
largest entries of $\y$.  If $\R$ is invariant under coordinate-wise sign
changes, let $T$ contain the indices of the $r$ largest entries of $\y$ in
absolute value.  Let
\[
 \widehat{\u}
 \in
 \argmin_{\u\in\reals^r}
 \left\{
 \lambda\norm{\u-\y_T}^2+\R(\I_T\u)
 \right\},
 \qquad
 \widehat{\x}:=\I_T\widehat{\u}.
\]
Then
\[
 \widehat{\x}
 \in
 \argmin_{\x\in\mC_r}
 \left\{
 \lambda\norm{\x-\y}^2+\R(\x)
 \right\}.
\]
Consequently, if every $\x^*\in\mX^*$ is $s$-sparse, then for every
$r\ge s$ this construction implements a weak proximal oracle.
\end{lemma}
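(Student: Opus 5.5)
The plan is to reduce the problem over $\mC_r$ to a problem over a fixed support, and then use a rearrangement/exchange argument to show the best support is $T$. I will write $\Phi(\x):=\lambda\norm{\x-\y}^2+\R(\x)$.

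\emph{Step 1: fixed support.} Any $\x\in\mC_r$ is supported on some $S\subseteq[n]$ with $|S|=r$, possibly after padding with zero entries. Since $\x_{S^c}=0$,
\[
\Phi(\x)=\lambda\norm{\x_S-\y_S}^2+\lambda\norm{\y_{S^c}}^2+\R(\I_S\x_S).
\]
Hence
\[
\min_{\x\in\mC_r}\Phi(\x)=\min_{|S|=r} V(S),\qquad V(S):=\lambda\norm{\y_{S^c}}^2+\min_{\u\in\reals^r}\bigl\{\lambda\norm{\u-\y_S}^2+\R(\I_S\u)\bigr\}.
\]
Every $\u$ with $\R(\I_S\u)<\infty$ gives $\I_S\u\in\dom(\R)$ with at most $r$ nonzeros, so these points are feasible for $\mC_r$. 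In particular $\widehat{\x}$ is feasible and $\Phi(\widehat\x)=V(T)$. It therefore suffices to show $V(T)\le V(S)$ for every $|S|=r$.

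\emph{Step 2: exchange.} Fix $S$ and a minimizer $\u$ attaining $V(S)$; if $V(S)=+\infty$ there is nothing to prove. Let $\x=\I_S\u$. I construct a point $\x'$ supported on $T$ with $\R(\x')=\R(\x)$ and $\norm{\x'-\y}^2\le\norm{\x-\y}^2$.

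\emph{Sign-invariant case.} Choose a bijection $\pi$ mapping $S\setminus T$ onto $T\setminus S$ and fixing $S\cap T$. Move the entry $x_i$ to position $\pi(i)$ and flip signs so that each entry agrees in sign with $\y$ at its new position. By permutation and sign invariance, $\R$ is unchanged. The distance terms split per coordinate, so it suffices to check the following:
\begin{itemize}
\item For $j\in S\cap T$, aligning signs only helps.
\item For $i\in S\setminus T$ moved to $j=\pi(i)$, the old contribution is $(x_i-y_i)^2+y_j^2$ and the new one is $(|x_i|-|y_j|)^2+y_i^2$. Expanding, new minus old equals $-2|x_i||y_j|+2x_iy_i\le 2|x_i|(|y_i|-|y_j|)\le 0$, since $|y_j|\ge|y_i|$ by the choice of $T$.
\end{itemize}

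\emph{Nonnegative case.} Here $\x\ge 0$, and I only permute, with no sign flips. The same computation gives the difference $2x_i(y_i-y_j)\le0$, using $y_j\ge y_i$ and $x_i\ge0$.

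\emph{Conclusion of Step 2.} The new point $\x'$ lies in $\dom(\R)$ and is supported on $T$, so $V(T)\le\Phi(\x')\le\Phi(\x)=V(S)$.

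\emph{Step 3: consequences.} Steps 1 and 2 give $\widehat{\x}\in\argmin_{\mC_r}\Phi$. Since each $\x^*$ is $s$-sparse with $s\le r$, we have $\x^*\in\mC_r$. Thus $\Phi(\widehat\x)\le\Phi(\x^*)$, which is exactly condition \eqref{eq:wpo-def}. Equivalently, this is a restricted proximal oracle with $\widehat{\mK}=\mC_r$, which is closed, so Definition~\ref{def:rpo} applies.

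The main obstacle is the exchange inequality in Step 2. It needs care with ties and with entries where $y_j=0$; the sign of $x'$ can be chosen arbitrarily there. It also requires that moving entries be realized by a genuine permutation of $\reals^n$. To handle this, I extend $\pi$ to a full permutation that swaps $S\setminus T$ with $T\setminus S$ and fixes everything else, and I use that $\x$ vanishes off $S$.
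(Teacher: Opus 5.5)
Your proof is correct and follows essentially the same route as the paper: you replace an arbitrary point of $\mC_r$ by a signed permutation of it supported on $T$, which preserves $\R$ and the norm and does not decrease the inner product with $\y$, and then observe that minimizing over $T$-supported points is exactly the $r$-dimensional problem. The only differences are minor: you check the improvement by a pairwise exchange of $S\setminus T$ into $T\setminus S$ (fixing $S\cap T$) rather than by the rearrangement inequality with a fully sorted pairing, which is slightly more elementary and makes the permutation explicit; and your remark that $\mC_r$ is closed is unnecessary and can fail when $\dom(\R)$ is not closed (e.g., $\dom(\R)=[0,1)^n$), though the weak-proximal-oracle conclusion follows directly from $\Phi(\widehat\x)\le\Phi(\x^*)$ as you show.
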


Thus the WPO requires only finding the $r$ largest coordinates and solving
an $r$-dimensional proximal problem.  For an indicator regularizer this
reduces to the sparse restricted projection considered previously in
\cite{beck2016minimization}.  More generally, it applies directly to
non-indicator regularizers with the same symmetry.

When $r\ll n$, the computational savings need not come only from the
proximal operation itself.  The sparse oracle output may also make the
first-order computations substantially cheaper.  For example, consider the
dense quadratic
\[
 f(\x)=\frac{1}{2}\x^\top\A\x+\b^\top\x.
\]
If $\A\x$ is maintained and an update has the form
$\x^+=(1-\gamma)\x+\gamma\v$, where $\v$ is $r$-sparse, then
$\A\x^+=(1-\gamma)\A\x+\gamma\A\v$,
and hence the new gradient can be computed in $O(nr)$ time, which is considerably faster than the
$O(n^2)$ time required for a fresh multiplication by a dense $\A$, in case $r \ll n$.

A related setting is Bayesian D-optimal design,
\[
 f(\x)=-\log\det\M(\x),
 \qquad
 \M(\x):=\M_0+\sum_{i=1}^n x_i\a_i\a_i^\top,
 \qquad \M_0\succ0,
\]
where $\a_i\in\reals^d$ and, for example, $\x$ belongs to the unit simplex. Here updating the gradient after a dense update to $\x$ (as occurs in worst case in proximal gradient methods) requires expensive matrix inversion using $O(d^3)$ time. However, when only a few entries in $\x$ are updated (as discussed for the quadratic case above), the inverse could be updated much more efficiently via the Sherman–Morrison–Woodbury formula.

\paragraph{Low-rank solutions with a spectral regularizer.}
There are two direct matrix analogues of the construction above.  First, let
$q=\min\{d_1,d_2\}$ and consider
$\mathbb{V}=\reals^{d_1\times d_2}$ endowed with the Frobenius norm, whose
ambient dimension is $n=d_1d_2$.  Suppose
that $\R$ is unitarily invariant, that is,
$\R(\P\X\Q^\top)=\R(\X)$
for every pair of orthogonal matrices $\P$ and $\Q$ of the appropriate
dimensions.  Equivalently, there is a proper, closed, convex, and
permutation-invariant function
$\rho:\reals_+^q\rightarrow\reals\cup\{+\infty\}$ such that
$\R(\X)=\rho\bigl(\boldsymbol{\sigma}(\X)\bigr)$,
where $\boldsymbol{\sigma}(\X)$ denotes the vector of singular values of
$\X$.  Thus $\R(\X)$ depends on $\X$ only through its singular values.  This
includes, for example, nuclear- and Schatten-norm regularizers, indicators
of spectrally symmetric sets, and sums of such terms.

There is also a symmetric-matrix version.  In this case,
$\mathbb{V}=\mbS^d$ is endowed with the Frobenius norm,
$\dom(\R)\subseteq\mbS_+^d$, and $\R$ is invariant under orthogonal
conjugation, that is,
$\R(\U\X\U^\top)=\R(\X)$
for every orthogonal matrix $\U$.  Equivalently, there is a proper, closed,
convex, and permutation-invariant function
$\rho:\reals^d\rightarrow\reals\cup\{+\infty\}$ with
$\dom(\rho)\subseteq\reals_+^d$ such that
$\R(\X)=\rho\bigl(\boldsymbol{\lambda}(\X)\bigr)$,
where $\boldsymbol{\lambda}(\X)$ denotes the vector of eigenvalues of
$\X$.  This setting includes, in particular, the indicator of the PSD cone.

In either setting, suppose that every $\X^*\in\mX^*$ has rank at most $r$,
and define
\[
 \mC_r
 :=
 \left\{
 \X\in\dom(\R):\rank(\X)\le r
 \right\}.
\]
As in the sparse-vector case, the proximal problem over $\mC_r$ reduces to
an $r$-dimensional problem.

\begin{lemma}%
\label{lem:spectral-sparse-projection}
In the rectangular-matrix setting, suppose
$\R(\X)=\rho\bigl(\boldsymbol{\sigma}(\X)\bigr)$,
where $\rho:\reals_+^q\rightarrow\reals\cup\{+\infty\}$ is permutation
invariant.  Fix $\Y\in\reals^{d_1\times d_2}$ and $\lambda>0$, and let
$\Y
 =
 \U\operatorname{Diag}(\boldsymbol{\sigma})\V^\top$,
 $\sigma_1\ge\cdots\ge\sigma_q\ge0$, be a SVD, where $\U$ and $\V$ have $q$ orthonormal
columns. Let $\U_r$ and $\V_r$ contain the
first $r$ columns of $\U$ and $\V$, respectively.  Let
\[
 \widehat{\u}
 \in
 \argmin_{\u\in\reals_+^r}
 \left\{
 \lambda\norm{\u-\boldsymbol{\sigma}_{1:r}}^2
 +
 \rho\!\left((\u,\mathbf{0}_{q-r})\right)
 \right\},
\]
and set
$\widehat{\X}
 :=
 \U_r\operatorname{Diag}(\widehat{\u})\V_r^\top$.

In the symmetric-matrix setting, suppose
$\R(\X)=\rho\bigl(\boldsymbol{\lambda}(\X)\bigr)$,
where $\rho:\reals^d\rightarrow\reals\cup\{+\infty\}$ is permutation
invariant and $\dom(\rho)\subseteq\reals_+^d$.  Fix
$\Y\in\mbS^d$ and $\lambda>0$, and let
$ \Y
 =
 \U\operatorname{Diag}(\boldsymbol{\gamma})\U^\top$,
 $\gamma_1\ge\cdots\ge\gamma_d$,
be an eigendecomposition.  Let $\U_r$ contain the first $r$ columns of
$\U$.  Let
\[
 \widehat{\u}
 \in
 \argmin_{\u\in\reals_+^r}
 \left\{
 \lambda\norm{\u-\boldsymbol{\gamma}_{1:r}}^2
 +
 \rho\!\left((\u,\mathbf{0}_{d-r})\right)
 \right\},
\]
and set
$\widehat{\X}
 :=
 \U_r\operatorname{Diag}(\widehat{\u})\U_r^\top$.

In either setting,
\[
 \widehat{\X}
 \in
 \argmin_{\X\in\mC_r}
 \left\{
 \lambda\norm{\X-\Y}_F^2+\R(\X)
 \right\}.
\]
Consequently, if every $\X^*\in\mX^*$ has rank at most $r$, the
corresponding construction implements a weak proximal oracle.
\end{lemma}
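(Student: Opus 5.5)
The plan is to reduce the matrix problem over $\mC_r$ to a vector problem over singular values (resp.\ eigenvalues), using von Neumann's trace inequality (resp.\ Ky Fan / the Hoffman--Wielandt-type inequality for symmetric matrices). We then use permutation invariance of $\rho$ and a rearrangement argument to show that the optimal vector is supported on the top $r$ entries of $\boldsymbol{\sigma}$ (resp.\ $\boldsymbol{\gamma}$). I treat the rectangular case in detail; the symmetric case is analogous.

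\emph{Lower bound.} Fix any $\X\in\mC_r$ with singular values $\boldsymbol{\sigma}(\X)$. At most $r$ of these are nonzero, and they are sorted nonincreasingly. Von Neumann's inequality $\ip{\X}{\Y}\le\ip{\boldsymbol{\sigma}(\X)}{\boldsymbol{\sigma}}$ gives $\norm{\X-\Y}_F^2\ge\norm{\boldsymbol{\sigma}(\X)-\boldsymbol{\sigma}}^2$. Let $\u\in\reals_+^r$ be the first $r$ entries of $\boldsymbol{\sigma}(\X)$. Then
\[
\lambda\norm{\X-\Y}_F^2+\R(\X)\ \ge\ \lambda\norm{\u-\boldsymbol{\sigma}_{1:r}}^2+\lambda\norm{\boldsymbol{\sigma}_{r+1:q}}^2+\rho\bigl((\u,\mathbf{0}_{q-r})\bigr).
\]
Minimizing the right-hand side over $\u\in\reals_+^r$ gives a lower bound equal to the objective of the $r$-dimensional problem at $\widehat{\u}$ plus the constant $\lambda\norm{\boldsymbol{\sigma}_{r+1:q}}^2$. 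Here I drop the sortedness constraint on $\u$, which only lowers the bound.

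\emph{Achievability.} Set $\widehat{\X}=\U_r\operatorname{Diag}(\widehat{\u})\V_r^\top$. It has rank at most $r$, and its singular values are a permutation of $(\widehat{\u},\mathbf{0})$. By permutation invariance, $\R(\widehat{\X})=\rho\bigl((\widehat{\u},\mathbf{0})\bigr)$, so $\widehat{\X}\in\dom(\R)$ whenever the $r$-dimensional optimum is finite, hence $\widehat{\X}\in\mC_r$. Because $\U,\V$ have orthonormal columns, $\norm{\widehat{\X}-\Y}_F^2=\norm{\widehat{\u}-\boldsymbol{\sigma}_{1:r}}^2+\norm{\boldsymbol{\sigma}_{r+1:q}}^2$ exactly. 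So $\widehat{\X}$ attains the lower bound and is therefore a minimizer over $\mC_r$. The WPO claim then follows directly from Definition~\ref{def:rpo}: every $\X^*\in\mX^*$ lies in $\mC_r$, so $\widehat{\X}$ satisfies \eqref{eq:wpo-def}.

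\emph{Symmetric case.} For $\X\in\mC_r$ with $\dom(\R)\subseteq\mbS_+^d$, the eigenvalues $\boldsymbol{\lambda}(\X)$ are nonnegative, sorted, and at most $r$ of them are nonzero. The inequality $\ip{\X}{\Y}\le\ip{\boldsymbol{\lambda}(\X)}{\boldsymbol{\gamma}}$ with both vectors sorted decreasingly gives $\norm{\X-\Y}_F^2\ge\norm{\boldsymbol{\lambda}(\X)-\boldsymbol{\gamma}}^2$. Since the last $d-r$ entries of $\boldsymbol{\lambda}(\X)$ are zero, this equals $\norm{\u-\boldsymbol{\gamma}_{1:r}}^2+\norm{\boldsymbol{\gamma}_{r+1:d}}^2$. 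The rest of the argument is identical to the rectangular case, with the same lower-bound and achievability steps.

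\emph{Main obstacle.} The delicate point is the tail term. Its bound must be independent of $\u$ and must equal what $\widehat{\X}$ achieves, which needs the nonzero part of $\boldsymbol{\sigma}(\X)$ matched against the \emph{top} $r$ entries. Sorted alignment in von Neumann's inequality provides this; for eigenvalues, possibly negative $\gamma_i$ are handled the same way because the comparison is taken in sorted order. A further subtlety is that $\widehat{\u}$ may not be sorted, so the singular values of $\widehat{\X}$ are a permuted version of it. This is harmless: the lower bound already holds for unsorted $\u$, and $\rho$ is permutation invariant.
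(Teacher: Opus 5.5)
Your proof is correct and takes essentially the same approach as the paper. Both arguments rest on von Neumann's (resp.\ Fan's) trace inequality, which lets any $\X\in\mC_r$ be compared against a matrix aligned with the leading singular vectors (resp.\ eigenvectors) of $\Y$, together with the exact Frobenius-norm identity for $\U_r\operatorname{Diag}(\u)\V_r^\top$ and the permutation invariance of $\rho$. The only difference is presentational: you phrase it as a lower bound plus achievability, which lets you skip the paper's separate rearrangement step showing that the sortedness constraint on $\u$ is redundant.
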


Two standard special cases make the computational saving particularly
concrete.  If $\R(\X)=\tau\norm{\X}_*$, where $\norm{\cdot}_*$ denotes the matrix nuclear norm (sum of singular values), the rank-$r$ restricted proximal
oracle computes the leading $r$ singular components of $\Y$ and
soft-thresholds their singular values at $\tau/(2\lambda)$, whereas the exact
proximal map soft-thresholds the entire singular spectrum, which in worst
case requires a full SVD.
In the symmetric-matrix setting, if $\R=I_{\mbS_+^d}$, i.e., $\R$ is the indicator for the positive semidefinite cone, the rank-$r$
restricted oracle requires the largest $r$ eigenpairs of $\Y$, whereas
exact projection onto the PSD cone requires all eigenpairs and hence a full eigen-decomposition. 

Note also that, as in the previous sparse-vector setting, for suitable
objective functions these low-rank updates can also significantly reduce the
cost of updating the gradient from one iteration to the next.

\paragraph{Vertex solutions over $0/1$ polytopes.}
For our third example suppose
$\R=I_{\mK}$, $\mK=\conv(\mV)$, $\mV\subseteq\{0,1\}^n$,
and suppose that every optimal solution is a vertex,
$\mX^*\subseteq\mV$.  Taking $\widehat{\mK}=\mV$ in
Definition~\ref{def:rpo}, the restricted proximal oracle reduces,
independently of $\lambda$, to
\[
 \argmin_{\v\in\mV}\norm{\v-\y}^2
 =
 \argmin_{\v\in\mV}
 \ip{\mathbf{1}-2\y}{\v},
\]
where we used
$\norm{\v}^2=\mathbf{1}^\top\v$ for every $\v\in\{0,1\}^n$.
Thus projection onto the vertices is exactly one linear-optimization call
over the polytope. This was already observed in \cite{garber2021frank}.
There are many nontrivial polytopes for which this linear problem is much
simpler than Euclidean projection onto the full polytope
\cite{Jaggi13,combettes2021}.  For a matroid base polytope it is solved
by the greedy algorithm; for the spanning-tree polytope it is a
minimum-weight spanning-tree computation; for the Birkhoff polytope it is
a linear assignment problem; and for an $s$--$t$ path polytope on an
acyclic network it is a shortest-path computation.  Exact Euclidean
projection, which in this case is the exact proximal operation, instead
requires solving the corresponding convex quadratic optimization problem
over the entire polytope.

\begin{table}[tp]
\centering
\begin{minipage}{0.94\textwidth}
\centering
\small
\caption{Representative implementations of weak proximal oracles and
comparison with exact proximal operations.}
\label{tab:wpo-examples}
\setlength{\tabcolsep}{5pt}
\renewcommand{\arraystretch}{1.2}

\begin{tabularx}{\linewidth}{
 @{}
 >{\raggedright\arraybackslash}p{0.21\linewidth}
 >{\raggedright\arraybackslash}X
 >{\raggedright\arraybackslash}p{0.29\linewidth}
 @{}
}
\toprule
Structure
& Weak proximal oracle
& Exact proximal oracle
\\
\midrule

$s$-sparse optimizers; symmetric regularizer
&
Select $r$ largest coordinates and solve resulting
$r$-dimensional proximal problem;  output is $r$-sparse
&
Full $n$-dimensional proximal problem;  output is dense in  worst
case
\\

\addlinespace

Rank-$r$ optimizers; unitarily invariant regularizer
&
Compute rank-$r$ truncated SVD and solve $r$-dimensional proximal
problem for retained singular values; roughly $O(rd^2)$ time for 
dense $d\times d$ matrix
&
Full spectral proximal problem; in worst case,  full SVD and
$O(d^3)$ time for  dense $d\times d$ matrix
\\

\addlinespace

Vertex optimizers over a $0/1$ polytope
&
Solve one linear optimization problem over polytope using, e.g.,
greedy optimization, a minimum spanning tree, linear assignment, or a
shortest-path computation
&
Convex quadratic optimization over  full polytope
\\

\bottomrule
\end{tabularx}
\end{minipage}
\end{table}

\subsection{Organization of the paper}
The remainder of the paper is organized as follows.
Section~\ref{sec:deterministic-lower-bounds} develops a resisting-oracle
construction and uses it to establish deterministic lower bounds for the
nonsmooth and smooth regimes under a $k$-fold sparse projection oracle
(Theorems~\ref{thm:nonsmooth-sparse-lb} and~\ref{thm:smooth-sparse-lb}).
Section~\ref{sec:randomized-lower-bounds} extends these lower bounds to
randomized methods for the $1$-fold sparse projection oracle, at the cost of
a logarithmic factor
(Theorems~\ref{thm:simple-randomized-nonsmooth-lb}
and~\ref{thm:simple-randomized-smooth-lb}).
Section~\ref{sec:qg-upper-bounds} complements these results with WPO-based
upper bounds under quadratic growth, first recalling the smooth-case result
and then developing a new nonsmooth method and its analysis
(Theorems~\ref{thm:qg-smooth-wpo-upper}
and~\ref{thm:qg-nonsmooth-wpo-upper}).

\section{Deterministic Lower Bounds}\label{sec:deterministic-lower-bounds}
We shall assume throughout this section that $\R$ is an  indicator for a convex and compact set and that $f$ is strongly convex, which in particular means that the optimal solution is unique. Most importantly, the lower bounds will hold under a stronger access model to $\R$ than a weak proximal oracle; we shall assume that the feasible set corresponding to $\dom(\R)$ is accessible through the stronger $k$-fold sparse projection oracle (Definition \ref{def:spo}). Our lower bounds will hold already for the case that the (unique) optimal solution is a vertex of the feasible set (i.e., $s=1$ in terms of the oracle definition).

The convex and compact set, for which $\R$ is the indicator, will be denoted $\mK_{N,R}:=\conv(\mV_{N,R})$, where $\mV_{N,R}$ is the set of vertices. Since our construction is such that the optimal solution is a vertex in $\mV_{N,R}$,  the set corresponding to the $k$-fold sparse projection operation is
\begin{align}\label{eq:detLB:Ck}
 \mC_k=\conv_k(\mV_{N,R})
 :=\left\{\sum_{i=1}^p\lambda_i\v_i:
 p\le k,\ \v_i\in\mV_{N,R},\ \lambda_i\ge0,\ \sum_i\lambda_i=1\right\}.
\end{align}

We use the standard black-box oracle model: the method is not given an
explicit description of $\mK_{N,R}$, and its only access to the feasible set is through projection onto
$\mC_k$.  The method may otherwise perform arbitrary computations and make
arbitrarily many first-order calls to the objective.

The proof uses a deterministic resisting oracle.  We fix an arbitrary
deterministic method, but leave some of the orthonormal vectors defining the
feasible set unspecified while the method interacts with the sparse-projection
oracle.  At each $k$-fold sparse-projection call, the not-yet-specified vectors
are chosen so that an exact projection can be returned without using the
target vertex $\s^*$, while fixing at most $k$ new vectors $\q_j$.  The
objective does not depend on these vectors, so arbitrarily many first-order
calls to the objective provide no information about the choices that remain.
After all sparse-projection calls and the final output are known, the
remaining orthonormal vectors are chosen so that every previously returned
point is still an exact projection w.r.t. $\mC_k$, while the output stays a constant distance from the target
vertex.  Since at most $k$ new vectors $\q_j$ are fixed per sparse-projection
call, this yields an $\Omega(N/k)$ geometric lower bound, which we then
combine with nonsmooth and smooth objectives.

\subsection{The resisting construction}

Let $N\ge2$, $m=2N$, and let
$\q_1,\ldots,\q_N\in\reals^m$ be orthonormal.  Define
\[
 \u_N
 :=
 \frac1{\sqrt N}\sum_{i=1}^N\q_i,
 \qquad
 \v_j
 :=
 \frac1{\sqrt N}\sum_{i=1}^j\q_i,
 \qquad
 \rho_j
 :=
 \frac{j}{N},
 \quad j\in[N].
\]
Then
\begin{align}\label{eq:surrogate-identities}
 \norm{\v_j}^2
 &=
 \rho_j,
 \quad
 \rho_1\le\cdots\le\rho_N=1, \nonumber \\
\ip{\v_i}{\v_j}
 &=
 \ip{\v_i}{\u_N}
 =
 \rho_i,
 \quad i<j\le N.
\end{align}
Thus, as seen from every earlier $\v_i$, a later $\v_j$ is
indistinguishable from $\u_N$.

Let $\rr\in\reals^m$ be a unit vector, and write a point of
$\reals^{3m}$ as $(\x,\z,\h)$ with
$\x,\z,\h\in\reals^m$.  For $R>0$ define
\begin{align}\label{eq:deterministic-vertices}
 \s^*
 &:=
 R\left(
   \frac{\u_N}{\sqrt2},
   0,
   \frac{\rr}{\sqrt2}
 \right),
 \qquad
 \s_j
 :=
 R\left(
   \frac{\v_j}{\sqrt2},
   \frac{\q_j}{\sqrt2},
   0
 \right),
 \quad j\in[N].
\end{align}
Let
$\mV_{N,R}:=\{\s^*,\s_1,\ldots,\s_N\}$ and
$\mK_{N,R}:=\conv(\mV_{N,R})$.
We call $\s^*$ the target vertex and
$\s_1,\ldots,\s_N$ the non-target vertices.

Each point in $\mV_{N,R}$ is a vertex of $\mK_{N,R}$.
Indeed, $\s_j$ is the unique maximizer over $\mV_{N,R}$ of the
linear functional $(\x,\z,\h)\mapsto\ip{\q_j}{\z}$,
while $\s^*$ is the unique maximizer of
$(\x,\z,\h)\mapsto\ip{\rr}{\h}$.
Moreover, $\norm{\s^*}=R$,
 $\norm{\s_j}^2
 =
 \frac{R^2}{2}(1+\rho_j)
 \le R^2$,
so $\mK_{N,R}$ is contained in the Euclidean ball of radius $R$. Also note that
\begin{align}\label{eq:deterministic-gram}
\forall i < j: \qquad \ip{\s_i}{\s_j}
 =
 \frac{R^2}{2}\rho_i
 =
 \ip{\s_i}{\s^*}.
\end{align}

\begin{theorem}[Deterministic geometric lower bound]
\label{thm:deterministic-geometric-hiding}
Fix an integer $N\ge5$, $1\le k\le6N$, and $R>0$.  For every
deterministic method making at most
$T\le\lfloor N/(4k)\rfloor$ projections onto $\mC_k$ (see \eqref{eq:detLB:Ck}), there exist
orthonormal vectors $\q_1,\ldots,\q_N\in\reals^{2N}$, a unit vector
$\rr\in\reals^{2N}$, and a deterministic tie-breaking rule for the
projection oracle such that, for the resulting set
$\mK_{N,R}\subset\reals^{6N}$ defined
by \eqref{eq:deterministic-vertices}, the method returns either an
infeasible point or a point $\widehat\w$ satisfying
\begin{align*}%
 \norm{\widehat\w-\s^*}^2
 \ge
 \frac{R^2}{2}.
\end{align*}
Between sparse-projection calls, the method may also receive arbitrary
answers from auxiliary oracles whose response rules do not depend on
$\q_1,\ldots,\q_N$ or $\rr$.  In the applications below, these
auxiliary answers are precisely the free first-order oracle
answers.
\end{theorem}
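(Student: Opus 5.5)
We use an adversary argument. The resisting oracle answers every sparse-projection call with a point supported only on non-target vertices whose $\q$-vectors are ``revealed''. At the end, it completes the hidden vectors $\q_j$ and $\rr$ consistently with all the answers it gave.

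\emph{Ordering and state.} The adversary fixes the ordering of vertices in advance: it reveals $\q_1,\q_2,\ldots$ in increasing index order. After $t$ calls, at most $tk$ vectors $\q_1,\ldots,\q_{J_t}$ are fixed, with $J_t\le tk\le N/4$. The key structural fact is the Gram identity \eqref{eq:deterministic-gram} together with \eqref{eq:surrogate-identities}. Among the revealed vertices $\s_1,\ldots,\s_{J}$, every unrevealed vertex $\s_j$ with $j>J$ and the target $\s^*$ have the same inner products. Their inner products with any point in the span of the revealed vectors are also equal, up to the components along unrevealed directions.

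\emph{Answering a query.} Given a query $\y\in\reals^{6N}$, we first pick the still-free directions $\q_{J+1},\ldots,\q_{J+k}$. We choose them orthonormal to the revealed vectors and, as far as possible, orthogonal to all query points seen so far. The dimension count works here: there are at most $T\le N/4$ queries, each spanning at most three blocks in $\reals^m$ with $m=2N$, so there is room. We intend the hidden part $\rr$ and the later $\q_j$ to also lie in this orthogonal complement. The projection onto $\mC_k$ decomposes as a minimum over supports of size at most $k$. For any support containing $\s^*$ or a later $\s_j$, the $\x$-components seen by $\y$ agree with those of $\s_{J+1}$ by \eqref{eq:surrogate-identities}. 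The components in the $\z,\h$ blocks are orthogonal to $\y$. So each such candidate can be replaced by one using the newly revealed $\s_{J+1},\ldots,\s_{J+k}$ with no larger distance: the norms satisfy $\norm{\s_j}\le\norm{\s^*}$ and increase in $j$, which favors earlier indices. The adversary then returns an exact minimizer supported on $\{\s_1,\ldots,\s_{J+k}\}$, with ties broken toward these vertices.

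\emph{Finishing.} After the final output $\widehat\w$ is known, we choose the remaining $\q_j$ and $\rr$ orthogonal to all queries, to all answers, and to $\widehat\w$. This is possible since $m=2N$ exceeds the number of constraints, roughly $N/4+3T+3$, which uses $N\ge5$. The auxiliary oracles do not depend on these vectors, so the whole transcript is unchanged. The past answers remain exact projections because the argument above is exactly a statement about inner products, and these are preserved. If $\widehat\w$ is feasible, then $\ip{\widehat\w}{(0,0,\rr)}=0$, while $\ip{\s^*}{(0,0,\rr)}=R/\sqrt2$. Hence $\norm{\widehat\w-\s^*}\ge R/\sqrt2$, which gives the bound $R^2/2$.

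\emph{Main obstacle.} The delicate step is the projection-domination claim. We must show that swapping the target or unrevealed vertices for freshly revealed ones never increases the distance to $\y$. The difficulty is that the $\x$-block inner products of $\v_j$ with $\y$ are only ``indistinguishable'' from those of $\u_N$ after restricting to revealed directions. The $\z$ and $\h$ blocks must also be handled by the orthogonality choice. I would handle this by writing the squared distance of a convex combination via its Gram matrix and linear terms, and checking that the substitution preserves all off-diagonal Gram entries and linear terms while not increasing the diagonal ones.
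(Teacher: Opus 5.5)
Your strategy is the same resisting-oracle argument as the paper's. You reveal the $\q_j$ lazily, answer each call with a point supported on revealed non-target vertices, and complete the remaining $\q_j$ and $\rr$ orthogonally at the end. Your closing step via the unit vector $(0,0,\rr)$ is correct, and it does not even need feasibility.

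However, the check you propose for the crux step fails as stated: the substitution does \emph{not} preserve the off-diagonal Gram entries. Since $\ip{\s_a}{\s_b}=\frac{R^2}{2}\rho_{\min(a,b)}$ and $\ip{\s_b}{\s^*}=\frac{R^2}{2}\rho_b$, replacing, e.g., $\s_a,\s_b$ with $J+k<a<b$ by $\s_{J+1},\s_{J+2}$ strictly lowers that entry from $\frac{R^2}{2}\rho_a$ to $\frac{R^2}{2}\rho_{J+1}$. What is true, and what the paper uses, is entrywise domination. Each substituted vertex must receive an index no larger than the one it replaces. The paper ensures this by sorting the support vertices not fixed before the call by index, target last, and sending the $p$th one to $\s_{J+p}$, so that $j_p\ge J+p$. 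Then every diagonal entry and every entry between two substituted vertices can only decrease. Entries against the $\s_i$ with $i\le J$ stay equal to $\frac{R^2}{2}\rho_i$. Nonnegativity of the convex weights then gives $\norm{\widetilde\w}\le\norm{\w}$. Also, the equality of linear terms $\ip{\y}{\s_j}=\ip{\y}{\s^*}$ for $j>J$ does not come from \eqref{eq:surrogate-identities}. It holds because every $\q_j$ with $j>J$, including your freshly chosen ones, is orthogonal to the $\p$- and $\z$-blocks of $\y$, and $\rr$ is orthogonal to its $\h$-block.

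Two bookkeeping points also need fixing. First, your final dimension count compares $m=2N$ with the number of constraints. But you must fit $N-J$ orthonormal vectors into the complement. With $c$ constraint vectors besides the revealed $\q_i$, this requires $2N-J-c\ge N-J$, i.e.\ $c\le N$. If the $\q_j$ must avoid all three blocks of every query and of $\widehat\w$, as your count $3T+3$ suggests, this already fails for $N=5$, $T=1$. The correct count uses that the $\q_j$ enter only through the first two blocks. So they need only avoid the $\p$- and $\z$-blocks of the queries, giving $c=2T\le N/2$; the answers already lie in the span of revealed vectors. Meanwhile $\rr$ lives in the third block and only has to avoid $T+1$ vectors. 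Second, you should specify that a repeated input receives the previously given answer, which remains an exact projection. Otherwise, since each of your calls fixes $k$ fresh vectors, the responses need not come from a single deterministic tie-breaking rule.
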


\begin{proof}
We construct the vectors gradually while interacting with the method.
Suppose that, immediately before a new sparse-projection call,
$\q_1,\ldots,\q_\ell$ have already been fixed, and write the input to the sparse projection oracle call as
$y=(\p,\z,\h)$, $\p,\z,\h\in\reals^m$.
Provisionally complete $\q_1,\ldots,\q_\ell$ to an orthonormal family
$\q_1,\ldots,\q_N$ such that every $\q_j$, $j>\ell$, is orthogonal
to the $\p$- and $\z$-blocks of all sparse-projection inputs seen so
far, including the current call.  Likewise, choose $\rr$ orthogonal
to the $\h$-blocks of all these inputs.  Only the vectors needed for
the current oracle answer will be fixed permanently.

For every $j>\ell$, the orthogonality conditions give
\begin{align}\label{eq:deterministic-hidden-score}
 \ip{\y}{\s_j}
 &=
 \frac{R}{\sqrt2}
 \left(
   \ip{\p}{\v_j}
   +
   \ip{\z}{\q_j}
 \right)
 =
 \frac{R}{\sqrt{2N}}
 \sum_{i=1}^{\ell}\ip{\p}{\q_i}
 =
 \ip{\y}{\s^*}.
\end{align}
Thus, all not-yet-fixed non-target vertices and the target vertex have
the same inner product with the current sparse-projection input.
Choose any $\w\in\argmin_{\x\in\mC_k}\norm{\x-\y}^2$
for the provisional completion, and fix a representation of $\w$
using at most $k$ vertices.  Write it as
\begin{align}\label{eq:original-projection-decomposition}
 \w
 =
 \sum_{i\in I}\lambda_i\s_i
 +
 \sum_{p=1}^q\mu_p\widetilde{\s}_p,
 \qquad
 I\subseteq[\ell],
\end{align}
where $\lambda_i,\mu_p\ge0$, their sum is one, and
$\widetilde{\s}_1,\ldots,\widetilde{\s}_q$ are the vertices in this
representation that are not already fixed.  Associate
$\widetilde{\s}_p=\s_j$ with the index $j$, and associate
$\widetilde{\s}_p=\s^*$ with the index $+\infty$.  Order them so
that their associated indices satisfy $j_1<\cdots<j_q$.
In particular, the target is last whenever it is present.

Replace these vertices, with the same coefficients, by the next
unused non-target vertices:
\begin{align}\label{eq:replacement-projection-decomposition}
 \widetilde{\w}
 :=
 \sum_{i\in I}\lambda_i\s_i
 +
 \sum_{p=1}^q\mu_p\s_{\ell+p}.
\end{align}
Since $|I|+q\le k$, we have $\widetilde{\w}\in\mC_k$.  Moreover, on
the $t$th sparse-projection call,
\[
 \ell+q
 \le
 kt
 \le
 kT
 \le
 \frac{N}{4},
\]
so all the replacement vertices are available.

By \eqref{eq:deterministic-hidden-score} we have that, $\ip{\y}{\widetilde{\w}}
 =
 \ip{\y}{\w}$.
We next show that
$\norm{\widetilde{\w}}\le\norm{\w}$.
For every already fixed vertex $\s_i$, $i\in I$, and every replaced
vertex, the corresponding cross inner product is unchanged by
\eqref{eq:deterministic-gram}:
\[
 \ip{\s_i}{\widetilde{\s}_p}
 =
 \frac{R^2}{2}\rho_i
 =
 \ip{\s_i}{\s_{\ell+p}}.
\]

For each $p$, if $\widetilde{\s}_p=\s_{j_p}$ is a non-target vertex,
then $j_p\ge\ell+p$ and,
\[
 \norm{\s_{\ell+p}}^2
 =
 \frac{R^2}{2}(1+\rho_{\ell+p})
 \le
 \frac{R^2}{2}(1+\rho_{j_p})
 =
 \norm{\widetilde{\s}_p}^2.
\]
If $\widetilde{\s}_p=\s^*$, the same inequality follows from
$\norm{\s_{\ell+p}}\le R=\norm{\s^*}$.

Finally, for $1\le p<r\le q$, the vertex
$\widetilde{\s}_p$ cannot be the target, and therefore $j_p$ is
finite.  Since $j_p\ge\ell+p$, we have using \eqref{eq:deterministic-gram}
\[
 \ip{\s_{\ell+p}}{\s_{\ell+r}}
 =
 \frac{R^2}{2}\rho_{\ell+p}
 \le
 \frac{R^2}{2}\rho_{j_p}
 =
 \ip{\widetilde{\s}_p}{\widetilde{\s}_r}.
\]
Since all coefficients in~\eqref{eq:original-projection-decomposition}
are nonnegative, the preceding comparisons imply that $\norm{\widetilde{\w}}^2
 \le
 \norm{\w}^2$.
Together with
$\ip{\y}{\widetilde{\w}}=\ip{\y}{\w}$, this gives
$\norm{\widetilde{\w}-\y}^2
 \le
 \norm{\w-\y}^2$.
Since $\w$ is a minimizer over $\mC_k$ and
$\widetilde{\w}\in\mC_k$, equality must hold.  Hence
$\widetilde{\w}$ is also an exact projection of $\y$ onto $\mC_k$.

Return $\widetilde{\w}$ and permanently fix 
$\q_{\ell+1},\ldots,\q_{\ell+q}$.  Thus the returned point does not
use the target vertex and at most $k$ new vectors are fixed.  If an
input is repeated, return exactly the same answer as before.

Every returned vertex depends only on vectors that have already been
fixed.  Moreover, for every previous sparse-projection input, its
inner products with all not-yet-fixed vertices are given by
\eqref{eq:deterministic-hidden-score}, while all vertex norms and
pairwise inner products are determined by
\eqref{eq:surrogate-identities}--\eqref{eq:deterministic-gram}.
Thus changing the still-unfixed orthonormal vectors does not change
the value of any previous sparse-projection problem.  Consequently,
all previous answers remain exact projections as the construction
continues, and the answers are consistent with a single deterministic
tie-breaking rule.  Calls to the auxiliary oracles do not affect the
construction because their response rules are independent of
$\q_1,\ldots,\q_N$ and $\rr$.

After all $T$ sparse-projection calls, at most $\ell\le kT$
vectors have been fixed.  The remaining $N-\ell$ vectors may be
chosen orthonormal and orthogonal to the $\p$- and $\z$-blocks of all
$T$ sparse-projection inputs, since the relevant orthogonal complement
has dimension at least
\[
 m-\ell-2T
 =
 2N-\ell-2T
 \ge
 N-\ell,
\]
where we used $T\le N/4$.  Finally, choose the unit vector $\rr$
orthogonal to the $\h$-blocks of all sparse-projection inputs and to
the third block $\widehat{\h}$ of the final output; this is possible
since these impose at most $T+1<m$ linear constraints.

Suppose now that $\widehat{\w}$ is feasible, and write
\[
 \widehat{\w}
 =
 \theta^*\s^*
 +
 \sum_{j=1}^N\theta_j\s_j,
 \qquad
 \theta^*,\theta_j\ge0,
 \qquad
 \theta^*+\sum_{j=1}^N\theta_j=1.
\]
Comparing the third blocks gives $\widehat{\h}
 =
 \frac{\theta^*R}{\sqrt2}\rr$.
Since $\rr\perp\widehat{\h}$, we must have
$\theta^*=0$, and therefore $\widehat{\h}=0$.  This implies that,
\[
 \norm{\widehat{\w}-\s^*}^2
 \ge
 \norm{\widehat{\h}-R\rr/\sqrt2}^2
 =
 \frac{R^2}{2},
\]
which proves the theorem.
\end{proof}

\subsection{The deterministic lower bounds}

\begin{theorem}[The nonsmooth case]
\label{thm:nonsmooth-sparse-lb}
Fix $\alpha,G>0$, an integer $N\ge5$, and $1\le k\le6N$.  For every
deterministic method using at most $\lfloor N/(4k)\rfloor$ calls to a
$k$-fold sparse projection oracle, there exists an instance in
dimension $n=6N$ with compact feasible set, an
$\alpha$-strongly convex objective whose feasible subgradients have
norm at most $G$, and a unique vertex minimizer, such that the
returned point $\widehat\w$ is either infeasible or
\begin{align}\label{eq:nonsmooth-deterministic-gap}
 f(\widehat\w)-f(\w^*)
 \ge
 \frac{G^2}{36\alpha N}.
\end{align}
Arbitrarily many first-order calls between sparse projections are
allowed.

Consequently, there is a universal constant $c_0>0$ such that,
whenever $G^2/(\alpha\epsilon{}k)$ is sufficiently large, every
deterministic method that guarantees an $\epsilon$-optimal feasible
point on all such instances requires at least
$c_0G^2/(\alpha\epsilon k)$ $k$-fold sparse projections in the worst
case.
\end{theorem}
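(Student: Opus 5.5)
The plan is to combine Theorem~\ref{thm:deterministic-geometric-hiding} with a strongly convex objective whose unique minimizer over $\mK_{N,R}$ is the target vertex $\s^*$. The distance bound $\norm{\widehat\w-\s^*}^2\ge R^2/2$ then becomes a function-value gap through strong convexity and first-order optimality. Set $N$ as in the statement, $k\le 6N$, $T\le\lfloor N/(4k)\rfloor$, and choose the radius $R:=G/(3\alpha\sqrt N)$ up to an absolute constant. If $f$ is $\alpha$-strongly convex and $\w^*=\s^*$ minimizes $f$ over $\mK_{N,R}$, then for every feasible $\widehat\w$,
\[
 f(\widehat\w)-f(\w^*)\ge\frac{\alpha}{2}\norm{\widehat\w-\s^*}^2\ge\frac{\alpha R^2}{4}=\frac{G^2}{36\alpha N},
\]
which is \eqref{eq:nonsmooth-deterministic-gap}.

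The main step, and the one I expect to be the obstacle, is designing $f$ so that three things hold at once. First, $\s^*$ must be the unique minimizer over $\mK_{N,R}$. Second, subgradients must have norm at most $G$ on $\mK_{N,R}$, which lies in the ball of radius $R$. Third, the response rule of the first-order oracle must not depend on $\q_1,\ldots,\q_N,\rr$, as Theorem~\ref{thm:deterministic-geometric-hiding} requires. The Gram structure \eqref{eq:surrogate-identities}--\eqref{eq:deterministic-gram} shows that any rotation-invariant function cannot single out $\s^*$: for instance, $\s_N$ has the same norm as $\s^*$. So $f$ must use the block structure $(\x,\z,\h)$, which does not depend on the hidden vectors. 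My first attempt is a function of the form
\[
 f(\x,\z,\h)=\frac{\alpha}{2}\norm{(\x,\z,\h)}^2+M\bigl(\norm{\z}-\norm{\h}\bigr)_{\text{convexified}},
\]
for example $f=\frac{\alpha}{2}\norm{\cdot}^2+M\norm{\z}-M'\ip{\cdot}{\cdot}$ restricted suitably. The aim is to penalize the $\z$-block, which vanishes exactly at $\s^*$, with slope $M=\Theta(\alpha R\sqrt N)=\Theta(G)$. This slope makes the penalty dominate the quadratic term's preference for interior points.

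I would verify optimality of $\s^*$ through the variational inequality $\ip{\g}{\s-\s^*}\ge0$ for all vertices $\s$ and some $\g\in\partial f(\s^*)$. The $\z$-block of $\s_j$ has norm $R/\sqrt2$, while the quadratic gain is at most $\alpha R^2$. So $M\gtrsim\alpha R$ already suffices for the vertices, and convex combinations are handled by convexity. The factor $\sqrt N$ in $G=3\alpha R\sqrt N$ leaves ample room for $G\ge\alpha R+M$. If this block-norm penalty fails to make $\s^*$ the minimizer, the fallback is to choose a penalty depending only on the block norms $\norm{\z}$ and $\norm{\h}$, which are rotation invariant within each block. Such a penalty keeps the first-order oracle independent of the hidden vectors, and I would tune it so that the vertex $\s^*$ wins.

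Given such an $f$, Theorem~\ref{thm:deterministic-geometric-hiding} applies verbatim, with the first-order oracle as the auxiliary oracle, and gives the first claim. For the consequence, given $\epsilon$, set
\[
 N:=\Bigl\lfloor\frac{G^2}{36\alpha\epsilon}\Bigr\rfloor-1,
\]
so that $G^2/(36\alpha N)>\epsilon$. When $G^2/(\alpha\epsilon k)$ is large, the conditions $N\ge5$ and $k\le6N$ hold. Any method using at most $\lfloor N/(4k)\rfloor$ projections then fails to be $\epsilon$-optimal on some instance. Hence at least $\lfloor N/(4k)\rfloor+1\ge c_0G^2/(\alpha\epsilon k)$ projections are needed.
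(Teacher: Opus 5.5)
Your architecture matches the paper's. You use the same radius $R=G/(3\alpha\sqrt N)$, the same conversion $f(\widehat\w)-f(\w^*)\ge\frac\alpha2\norm{\widehat\w-\s^*}^2\ge\alpha R^2/4$, and an objective built from $\frac\alpha2\norm{\cdot}^2+M\norm{\z}$ with $M=\Theta(G)$ whose first-order rule ignores $\q_1,\ldots,\q_N,\rr$. Your derivation of the consequence is also correct.

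The gap is in the step you yourself flag as the main one. You never certify that $\s^*$ is the unique minimizer, and the reasoning you sketch for it is wrong. Comparing the $\z$-block norm $R/\sqrt2$ of each $\s_j$ with the quadratic loss $\alpha R^2$ only shows $f(\s_j)\ge f(\s^*)$ at the vertices. Convexity does not carry this to the hull, since it controls maxima over a polytope, not minima.

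For $M=O(\alpha R)$ the claim is in fact false when $N$ is large. The barycenter $\frac1N\sum_j\s_j$ has $\z$-block of norm only $R/\sqrt{2N}$ and $\x$-block of squared norm about $R^2/6$. Hence $f\bigl(\frac1N\sum_j\s_j\bigr)\approx\frac{\alpha R^2}{12}+\frac{MR}{\sqrt{2N}}<\frac{\alpha R^2}{2}=f(\s^*)$.

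A correct variational-inequality argument needs one fixed $\g=\alpha\s^*+(0,\mathbf{d},0)\in\partial f(\s^*)$ with $\norm{\mathbf{d}}\le M$. It must satisfy $\ip{\mathbf{d}}{\q_j}\ge\sqrt2\,\alpha R(1-\rho_j/2)\ge\alpha R/\sqrt2$ for every $j$ simultaneously. Orthonormality of the $\q_j$ gives $\sum_j\ip{\mathbf{d}}{\q_j}^2\le M^2$, hence $M\ge\alpha R\sqrt{N/2}$. So the slope really must be of order $\alpha R\sqrt N$, and the ``ample room'' you claim for $\alpha R+M\le G$ is only a constant factor.

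The missing idea is the paper's certificate. Since $\z=0$ at $\s^*$, the subdifferential of $M\norm{\z}$ there is the whole ball of radius $M$. You may therefore take $\g^*:=\alpha\s^*+(0,M\u_N,0)$, whose $\z$-part is equally correlated with every $\q_j$. The oracle itself can still return $\z$-component $0$ at $\z=0$, which keeps its rule independent of the hidden vectors.

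Using $\ip{\u_N}{\v_j}=\rho_j$ and $\ip{\u_N}{\q_j}=1/\sqrt N$, one gets $\ip{\g^*}{\s_j-\s^*}=\alpha R^2(\rho_j/2-1)+MR/\sqrt{2N}$. This is positive for all $j$ once $M>\sqrt2\,\alpha R\sqrt N=\sqrt2G/3$. The subgradient bound, on the other hand, needs $M\le G-G/(3\sqrt N)$. The paper's choice $M=2G/3$ lies in this window and yields $\ip{\g^*}{\s_j-\s^*}=\frac{G^2}{9\alpha N}\bigl(\frac{\rho_j}{2}-1+\sqrt2\bigr)>0$. Linearity extends this to all of $\mK_{N,R}$, and strong convexity gives uniqueness.

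Two consequences for your plan. The constant $3$ in $R$ is not cosmetic: it is what leaves room for both constraints on $M$. Your fallback terms (subtracting $M'\ip{\cdot}{\cdot}$ or $\norm{\h}$) break convexity and are not needed.
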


\begin{proof}
Fix a deterministic method.  Use the preceding geometry with
$R
 =
 G/(3\alpha\sqrt N)$
and $\w^*=\s^*$, and define
\begin{align}\label{eq:nonsmooth-deterministic-objective}
 f(\x,\z,\h)
 &:=
 \frac\alpha2
 \bigl(
   \norm\x^2+\norm\z^2+\norm\h^2
 \bigr)
 +
 \frac{2G}{3}\norm\z,
 \qquad
 \x,\z,\h\in\reals^m.
\end{align}
This function is $\alpha$-strongly convex.  Since
$\mK_{N,R}$ is contained in the ball of radius $R$, every feasible
subgradient has norm at most
\[
 \alpha R+\frac{2G}{3}
 =
 \frac{G}{3\sqrt N}+\frac{2G}{3}
 \le
 G.
\]
To specify the free first-order oracle completely, let it return
\[
 \left(
   \alpha\x,\,
   \alpha\z+\frac{2G}{3}\frac{\z}{\norm\z},\,
   \alpha\h
 \right)
\]
when $\z\ne0$, and $(\alpha\x,0,\alpha\h)$ when $\z=0$.
In particular, these answers depend only on the first-order input
$(\x,\z,\h)$, and not on the vectors $\q_j$ or $\rr$ defining the
feasible set.

In order to verify the optimality of $\w^*=\s^*$, and although the first-order oracle returns the zero element of the
$\z$-subdifferential when $\z=0$, for the optimality certificate we may
choose
\[
 \g^*
 =
 \left(
   \frac{\alpha R}{\sqrt2}\u_N,\,
   \frac{2G}{3}\u_N,\,
   \frac{\alpha R}{\sqrt2}\rr
 \right)
 \in\partial f(\w^*),
\]
since $\norm{\u_N}=1$.

For every non-target vertex $\s_j$,
\[
 \s_j-\s^*
 =
 R\left(
   \frac{\v_j-\u_N}{\sqrt2},
   \frac{\q_j}{\sqrt2},
   -\frac{\rr}{\sqrt2}
 \right),
\]
and therefore
\[
 \begin{aligned}
 \ip{\g^*}{\s_j-\s^*}
 &=
 \frac{\alpha R^2}{2}\ip{\u_N}{\v_j-\u_N}
 +
 \frac{2GR}{3\sqrt2}\ip{\u_N}{\q_j}
 -
 \frac{\alpha R^2}{2}\norm{\rr}^2\\
 &=
 \frac{\alpha R^2}{2}(\rho_j-1)
 +
 \frac{2GR}{3\sqrt{2N}}
 -
 \frac{\alpha R^2}{2}\\
 &=
 \frac{G^2}{9\alpha N}
 \left(
   \frac{\rho_j}{2}-1+\sqrt2
 \right) > 0,
 \end{aligned}
\]
where we used
$\norm{\u_N}=\norm{\rr}=1$,
$\ip{\u_N}{\v_j}=\rho_j$,
$\ip{\u_N}{\q_j}=1/\sqrt N$, and
$R=G/(3\alpha\sqrt N)$.

Thus, $\w^*=\s^*$ is indeed the unique constrained minimizer.  Theorem
\ref{thm:deterministic-geometric-hiding} and strong convexity now
give
\[
 f(\widehat\w)-f(\w^*)
 \ge
 \frac{\alpha R^2}{4}
 =
 \frac{G^2}{36\alpha N},
\]
which proves the theorem.
\end{proof}

\begin{remark}
Note that the bounded-subgradient property in
Theorem~\ref{thm:nonsmooth-sparse-lb} holds on more than just the
feasible set.  Indeed,
$\mK_{N,R}\subseteq\{\w:\norm{\w}\le R\}$, and for every
$\norm{\w}\le R$ and every $\g\in\partial f(\w)$,
\[
 \norm{\g}
 \le
 \alpha R+\frac{2G}{3}
 =
 \frac{G}{3\sqrt N}+\frac{2G}{3}
 \le G.
\]
Thus the lower bound continues to hold when subgradients are assumed
bounded on a Euclidean ball containing the feasible set.  This is
the same type of neighborhood-access assumption used by
MOPES~\cite{MOPES} to obtain its improved projection complexity for
nonsmooth constrained optimization.
\end{remark}

\begin{theorem}[The smooth case]
\label{thm:smooth-sparse-lb}
Fix $\alpha,\beta>0$, let $\kappa:=\beta/\alpha$,
$N
 :=
 \left\lfloor(\kappa-5)/16\right\rfloor$,
and suppose that $\kappa\ge85$.  For every $1\le k\le6N$ and every
deterministic method using at most $\lfloor N/(4k)\rfloor$ calls to a
$k$-fold sparse projection oracle, there exists an instance in
dimension $n=6N$ with an $\alpha$-strongly convex and
$\beta$-smooth quadratic objective, a unique vertex minimizer, and a
feasible set diameter $D$ satisfying $D\le2$, such that the
returned point $\widehat{\w}$ is either infeasible or
\begin{align}\label{eq:smooth-deterministic-gap}
 f(\widehat\w)-f(\w^*)
 \ge
 \frac{\alpha}{4}.
\end{align}
Consequently, for $\epsilon=\alpha/8$, every deterministic method
that guarantees an $\epsilon$-optimal feasible point on all such
instances requires
$\Omega\left(\beta/(\alpha k)\right)$ calls to the $k$-fold sparse
projection oracle in worst case.
\end{theorem}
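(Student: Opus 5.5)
The plan is to reuse the resisting geometry of Theorem~\ref{thm:deterministic-geometric-hiding} with $R=1$, so that $\mK_{N,1}$ lies in the unit ball and $D\le2$, and to take $\w^*=\s^*$. It then suffices to build an $\alpha$-strongly convex, $\beta$-smooth quadratic $f$ with three properties. First, $f$ and its gradient oracle must not depend on $\q_1,\ldots,\q_N,\rr$, so they are legitimate auxiliary oracles. Second, $\s^*$ must be the unique minimizer over $\mK_{N,1}$. Third, strong convexity together with first-order optimality of $\s^*$ then gives
\[
f(\widehat\w)-f(\w^*)\ge\frac\alpha2\norm{\widehat\w-\s^*}^2\ge\frac{\alpha}{2}\cdot\frac12=\frac\alpha4 .
\]
The consequence for $\epsilon=\alpha/8$ is immediate, since $\lfloor N/(4k)\rfloor=\Omega(\kappa/k)$ once $\kappa\ge85$.

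The key design issue is the optimality certificate. In the nonsmooth proof the certificate used a $\z$-block subgradient $\frac{2G}{3}\u_N$ at $\z=0$. A smooth, rotation-invariant function of $\z$ alone has zero gradient at $\z=0$. Moving from $\s^*$ toward $\s_j$ then strictly decreases $\frac\alpha2\norm{\x}^2+\frac\alpha2\norm{\h}^2$ to first order, so such an objective cannot work. I would therefore introduce a coupling between the $\x$- and $\z$-blocks, which is still independent of the hidden vectors:
\[
f(\x,\z,\h):=\frac a2\norm{\x}^2+\frac b2\norm{\z}^2+\gamma\ip{\x}{\z}+\frac\alpha2\norm{\h}^2 .
\]
At $\s^*$ we have $\z=0$, so the gradient is
\[
\g^*=\Bigl(\frac{a}{\sqrt2}\u_N,\ \frac{\gamma}{\sqrt2}\u_N,\ \frac{\alpha}{\sqrt2}\rr\Bigr).
\]
Using $\ip{\u_N}{\v_j}=\rho_j$ and $\ip{\u_N}{\q_j}=1/\sqrt N$, this yields
\[
\ip{\g^*}{\s_j-\s^*}=\frac a2(\rho_j-1)+\frac{\gamma}{2\sqrt N}-\frac\alpha2 .
\]
This is positive for all $j$ as soon as $\gamma>(a+\alpha)\sqrt N$, which makes $\s^*$ the unique minimizer over the hull.

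The main obstacle is balancing these parameters against the conditioning constraint $\alpha\I\preceq\nabla^2 f\preceq\beta\I$. The $(\x,\z)$ Hessian acts blockwise as $\begin{pmatrix}a&\gamma\\ \gamma&b\end{pmatrix}$. Its smallest eigenvalue is at least $\alpha$ iff $a\ge\alpha$ and $(a-\alpha)(b-\alpha)\ge\gamma^2$, and its largest eigenvalue must be at most $\beta$. I would first try $a=2\alpha$ and $\gamma=3\alpha\sqrt N+\delta$ with a small slack $\delta$ (e.g.\ $\delta=\alpha/2$). Then take $b=\alpha+\gamma^2/\alpha\approx\alpha(9N+1)$, and check that the top eigenvalue, at most $\max(a,b)+\gamma$, stays below $\beta$. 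The choice of $a$ and $\gamma$ may need retuning (for instance $a$ slightly above $\alpha$ with a correspondingly adjusted $\gamma$) so that the resulting linear-in-$N$ requirement matches exactly $N=\lfloor(\kappa-5)/16\rfloor$, $\kappa\ge85$; pinning down these constants is the only delicate calculation.

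Finally, the first-order oracle returns $\nabla f$ computed from the query alone, so it does not depend on $\q_j,\rr$. Theorem~\ref{thm:deterministic-geometric-hiding} then applies with arbitrarily many gradient calls, which completes the argument.
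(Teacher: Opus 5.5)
Your proposal is correct and is essentially the paper's proof. It uses the same geometry with $R=1$, the same $\x$--$\z$ coupled quadratic plus $\frac{\alpha}{2}\norm{\h}^2$, the same certificate $\ip{\nabla f(\s^*)}{\s_j-\s^*}>0$, and strong convexity to turn $\norm{\widehat\w-\s^*}^2\ge 1/2$ into the gap $\alpha/4$; the paper takes $a=5\alpha$, $b=\beta-4\alpha$ and cross coefficient $2\alpha\sqrt{\kappa-5}$ so the Hessian eigenvalues are exactly $\alpha$ and $\beta$. Your own constants need no retuning: with $a=2\alpha$, $\gamma=3\alpha\sqrt N+\alpha/2$, $b=\alpha+\gamma^2/\alpha$ the Hessian eigenvalues are $\alpha$ and $2\alpha+\gamma^2/\alpha=\alpha(9N+3\sqrt N+9/4)\le\alpha(16N+5)\le\beta$ for $N=\lfloor(\kappa-5)/16\rfloor$.
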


\begin{proof}
Fix a deterministic method.  Since $\kappa\ge85$, we have $N\ge5$.
Use the same geometry with $R=1$, set $\w^*=\s^*$, and define $\gamma
 :=
 2\sqrt{\kappa-5}/\kappa$.
Consider the quadratic objective
\begin{align}\label{eq:smooth-deterministic-objective}
 f(\x,\z,\h)
 &:=
 \frac{\beta}{2}
 \left(
   \frac{5}{\kappa}\norm\x^2
   +
   2\gamma\ip\x\z
   +
   \left(1-\frac4\kappa\right)\norm\z^2
   +
   \frac1\kappa\norm\h^2
 \right),
 ~
 \x,\z,\h\in\reals^m.
\end{align}
Its gradient is
\[
 \nabla f(\x,\z,\h)
 =
 \beta
 \left(
   \frac5\kappa\x+\gamma\z,\,
   \gamma\x+\left(1-\frac4\kappa\right)\z,\,
   \frac1\kappa\h
 \right).
\]
In particular, the objective and its gradient depend only on the
query $(\x,\z,\h)$ and not on the vectors $\q_j$ or $\rr$ defining
the feasible set and its target vertex.  Hence first-order
objective calls reveal no information about these hidden vectors.

The Hessian with respect to the three blocks $(\x,\z,\h)$ is
\[
 \nabla^2 f
 =
 \beta
 \begin{pmatrix}
  \frac5\kappa\Id_m
  &
  \gamma\Id_m
  &
  0
  \\
  \gamma\Id_m
  &
  \left(1-\frac4\kappa\right)\Id_m
  &
  0
  \\
  0
  &
  0
  &
  \frac1\kappa\Id_m
 \end{pmatrix}.
\]
The trace and determinant of the upper-left $2\times2$ block,
without the factor $\beta$, are respectively
\[
 1+\frac1\kappa
 \qquad\text{and}\qquad
 \frac5\kappa
 \left(
   1-\frac4\kappa
 \right)
 -
 \gamma^2
 =
 \frac1\kappa.
\]
Its two distinct eigenvalues are therefore $1$ and $1/\kappa$.  Together with
the eigenvalue $1/\kappa$ of the third block, this shows that the
exact strong-convexity and smoothness parameters are indeed 
$\beta/\kappa
 =
 \alpha$ and $\beta$,
respectively.
Moreover,
\[
 \nabla f(\s^*)
 =
 \frac{\beta}{\sqrt2}
 \left(
   \frac5\kappa\u_N,\,
   \gamma\u_N,\,
   \frac1\kappa\rr
 \right).
\]
Since $N
 \le
 (\kappa-5)/16$,
we have
\[
 \frac{\gamma}{\sqrt N}
 =
 \frac{2\sqrt{\kappa-5}}{\kappa\sqrt N}
 \ge
 \frac8\kappa.
\]
For every non-target vertex $\s_j$,
\[
 \s_j-\s^*
 =
 \left(
   \frac{\v_j-\u_N}{\sqrt2},
   \frac{\q_j}{\sqrt2},
   -\frac{\rr}{\sqrt2}
 \right),
\]
and therefore
\[
 \begin{aligned}
 \ip{\nabla f(\s^*)}{\s_j-\s^*}
 &=
 \frac{\beta}{2}\left(
   \frac5\kappa\ip{\u_N}{\v_j-\u_N}
   +\gamma\ip{\u_N}{\q_j}
   -\frac1\kappa\norm{\rr}^2
 \right)\\
 &=
 \frac{\beta}{2}\left(
   \frac5\kappa(\rho_j-1)
   +\frac{\gamma}{\sqrt N}
   -\frac1\kappa
 \right)\\
 &\ge
 \frac{\beta}{2}\left(
   -\frac5\kappa+\frac8\kappa-\frac1\kappa
 \right)
 >
 0,
 \end{aligned}
\]
where we used
$\norm{\u_N}=\norm{\rr}=1$,
$\ip{\u_N}{\v_j}=\rho_j$,
$\ip{\u_N}{\q_j}=1/\sqrt N$,
$\rho_j\ge0$, and
$\gamma/\sqrt N\ge8/\kappa$.

Thus $\w^*=\s^*$ is indeed the unique minimizer.
Theorem~\ref{thm:deterministic-geometric-hiding} and the strong convexity of $f$ then give that whenever the returned point $\widehat\w$ is feasible,
\[
 f(\widehat\w)-f(\w^*)
 \ge
 \frac{\alpha}{2}
 \norm{\widehat\w-\w^*}^2
 \ge
 \frac{\alpha}{4},
\]
proving the theorem.

\end{proof}

\section{Randomized Lower Bounds}\label{sec:randomized-lower-bounds}
The hard instances in the lower bounds in the previous section were tailored for a specific deterministic method via a resisting oracle construction. This however leaves open the possibility that a randomized method might escape these lower bounds. Here we partially close this gap. We provide lower bounds against any randomized method. As before we shall consider the case that $\R$ is an indicator for a convex and compact set and that $f$ is strongly convex. However, while our deterministic lower bounds allowed access to the feasible set through a $k$-fold sparse projection oracle, here we shall only consider the case $k=1$.

We use a modified construction in which one target
vertex is accompanied by many non-target vertices of the same norm.  Hence a
$1$-fold sparse projection is equivalent to linear maximization over the
vertices, and typically returns a non-target vertex. Here, the set is constructed based on a randomly chosen vector $\sigma$, sampled before the interaction begins.

A randomized method is allowed at most $T$ sparse-projection calls for every
realization of its internal randomness.  All expectations below are over this
randomness once the instance is fixed.  Infeasible outputs have infinite composite error.  As in
previous section, only sparse-projection calls are counted: between them the method
may make arbitrary first-order calls to the objective.  In the objective
constructions below, the rule producing those first-order answers is
independent of the random vector $\sigma$, so these free calls do not reveal the random
instance.

\subsection{The randomized construction}

Let $\sigma\sim\operatorname{Unif}\{-1,+1\}^N$ and
$\u_\sigma=N^{-1/2}\sum_i\sigma_i\e_i$.  Write points in
$\reals^{2N}$ as $(\x,\z)$ with $\x,\z\in\reals^N$.  For $R>0$ define
\[
 \begin{aligned}
 \s_{\sigma}^*&:=(R\u_\sigma,0),\qquad
 \mD_i:=\left\{\left(\frac{R}{\sqrt2}\v,
                  \frac{R}{\sqrt2}\sigma_i\e_i\right)|
                  \norm\v=1\right\},\quad i\in[N],\\
 \mV_\sigma&:=\{\s_{\sigma}^*\}\cup\bigcup_{i=1}^N\mD_i,
 \qquad
 \mK_\sigma:=\conv(\mV_\sigma).
 \end{aligned}
\]
Note the dependence of $\mD_i$ on $\sigma$ is only through the sign $\sigma_i$.
Every point of $\mV_\sigma$ has norm $R$, and hence
$\operatorname{vert}(\mK_\sigma)=\mV_\sigma$ and $\operatorname{diam}(\mK_\sigma)\le2R$.
The point $\s_{\sigma}^*$ is the target vertex, every point in
$\bigcup_i\mD_i$ is a non-target vertex.  For $k=1$, the sparse-projection oracle projects onto
$\mC_1=\mV_\sigma$.  Since all points in $\mC_1$ have norm $R$, this is
simply linear maximization.  For an input
$(\p,\z)\in\reals^{2N}$, the target score and the largest
non-target score, divided by $R$, are
\begin{align}\label{eq:ranomLB:score}
&T_\sigma(\p,\z):=\frac{\langle(\p,\z),\s_{\sigma}^*\rangle}{R}
=\langle \p,\u_\sigma\rangle, \nonumber\\
&D_\sigma(\p,\z):=\frac{1}{R}\max_{\s\in\cup_i\mD_i}\langle(\p,\z),\s\rangle
=\frac{1}{\sqrt2}\left(\Vert\p\Vert+\max_i\sigma_i z_i\right).
\end{align}
Fix the following deterministic tie-breaking rule.  Return the target only
when $T_\sigma>D_\sigma$; otherwise return a maximum-score non-target
vertex, breaking ties by the smallest maximizing index $i$.  If $\p\ne0$,
the first block of this vertex is $R\p/(\sqrt2\norm\p)$; if $\p=0$, use
one fixed unit vector, scaled by $R/\sqrt2$, for the first block.  Thus, for
a fixed input, a non-target answer is determined by the pair $(i,\sigma_i)$
and has at most $2N$ possible values that depend on the random vector $\sigma$.

We use the following elementary concentration lemma.

\begin{lemma}%
\label{lem:simple-randomized-concentration}
For every fixed sparse-projection input $(\p,\z)$,
\[
 \Pr_\sigma\{\text{projection oracle returns }\s_{\sigma}^*\}
 \le
 \eta_N
 :=
 e^{-N/4}+2^{-N}.
\]
Moreover, for every fixed $\x\in\reals^N$,
\[
 \Pr_\sigma\!\left(
  \norm{\x-R\u_\sigma}<\frac{R}{\sqrt2}
 \right)
 \le
 e^{-N/4}
 \le
 \eta_N.
\]
\end{lemma}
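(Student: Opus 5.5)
The plan is to handle the second claim first, since it is a direct Hoeffding bound, and then reduce the first claim to it plus a union bound.

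\textbf{Second claim.} Expanding the square gives
\[
\norm{\x-R\u_\sigma}^2=\norm\x^2+R^2-2R\ip{\x}{\u_\sigma}.
\]
Hence the event $\norm{\x-R\u_\sigma}<R/\sqrt2$ is equivalent to
\[
\ip{\x}{\u_\sigma}>\frac{\norm\x^2+R^2/2}{2R}.
\]
By AM--GM, $\norm\x^2+R^2/2\ge \sqrt2 R\norm\x$, so the event forces $\ip{\x}{\u_\sigma}>\norm\x/\sqrt2$. In particular $\x\neq 0$, since for $\x=0$ the right-hand side is $R/4>0$ while the left-hand side is $0$. Now $\ip{\x}{\u_\sigma}=N^{-1/2}\sum_i\sigma_ix_i$ is a Rademacher sum with variance proxy $\norm\x^2/N$. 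Hoeffding's inequality then gives
\[
\Pr\bigl(\ip{\x}{\u_\sigma}>t\bigr)\le \exp\bigl(-Nt^2/(2\norm\x^2)\bigr).
\]
With $t=\norm\x/\sqrt2$ this equals $e^{-N/4}$.

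\textbf{First claim.} The oracle returns the target exactly when $T_\sigma>D_\sigma$, that is, when
\[
\ip{\p}{\u_\sigma}>\tfrac1{\sqrt2}\bigl(\norm\p+\max_i\sigma_iz_i\bigr).
\]
I will intersect this with the event $\max_i\sigma_iz_i\ge0$. On that event the target can be returned only if $\ip{\p}{\u_\sigma}>\norm\p/\sqrt2$, which requires $\p\ne0$. By the same Hoeffding bound with $t=\norm\p/\sqrt2$, this has probability at most $e^{-N/4}$.

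It remains to bound the complementary event $\max_i\sigma_iz_i<0$, which requires $\sigma_iz_i<0$ for every $i$. If some $z_i=0$ this event is impossible. Otherwise each inequality $\sigma_iz_i<0$ forces $\sigma_i=-\operatorname{sign}(z_i)$, so the event pins down $\sigma$ completely and has probability $2^{-N}$. A union bound over the two cases gives $\eta_N=e^{-N/4}+2^{-N}$.

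\textbf{Main obstacle.} The only delicate points are the edge cases: $\p=0$, where the target score is $0\le D_\sigma$ whenever the max is nonnegative, and the strict versus non-strict inequality in the tie-breaking rule. Both are handled by the case split above. The remaining details are routine.
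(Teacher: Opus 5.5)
Your proof is correct and follows essentially the same route as the paper. It uses the same Hoeffding bound for a fixed direction, the same case split on the sign of $\max_i\sigma_iz_i$ (with the negative case pinning down $\sigma$ and giving the $2^{-N}$ term), and the same expansion of $\norm{\x-R\u_\sigma}^2$ for the second claim. The only differences are cosmetic: you apply Hoeffding to the unnormalized $\x$ (and $\p$) rather than to a unit vector, and you prove the two claims in reverse order.
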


\begin{proof}
For every fixed unit vector $\rr\in\reals^N$, Hoeffding's inequality gives
\begin{align}\label{eq:randomized-fixed-correlation}
 \Pr_\sigma\!\left(
  \ip\rr{\u_\sigma}>\frac1{\sqrt2}
 \right)
 \le
 e^{-N/4}.
\end{align}

Fix first an input $(\p,\z)$ and let
$M:=\max_i\sigma_i z_i$.  If $M\ge0$, then
$D_\sigma\ge\norm\p/\sqrt2$, so a target answer implies $\p\ne0$ and
$\ip{\p/\norm\p}{\u_\sigma}>1/\sqrt2$.
By~\eqref{eq:randomized-fixed-correlation}, this event has probability at
most $e^{-N/4}$.  If $M<0$, then every $z_i$ is nonzero and
$\sigma_i=-\operatorname{sign}(z_i)$ for all $i$, which is one prescribed
sign vector and has probability $2^{-N}$.  This proves the first claim.

Now fix $\x$.  If $\x=0$, the second event is impossible.  Otherwise write
$\x=t\rr$, where $t:=\norm\x>0$ and $\norm\rr=1$.  If
$\norm{\x-R\u_\sigma}<R/\sqrt2$, then
\[
 t^2+R^2-2Rt\ip\rr{\u_\sigma}
 <
 \frac{R^2}{2}.
\]
Hence
\[
 2Rt\ip\rr{\u_\sigma}
 >
 t^2+\frac{R^2}{2}
 \ge
 \sqrt2 Rt,
\]
and therefore
$\ip\rr{\u_\sigma}>1/\sqrt2$.
Thus the second claim follows again from~\eqref{eq:randomized-fixed-correlation}.
\end{proof}

\begin{theorem}[Randomized geometric lower bound]
\label{thm:simple-randomized-geometric-hiding}
For any $N\ge8$, let
$T\le\left\lfloor N/(128\log(2N))\right\rfloor$.
Every randomized method using at most $T$ calls to a $1$-fold sparse
projection oracle, while making arbitrary first-order calls whose
returned answers are generated by a rule independent of $\sigma$, has some
fixed $\sigma\in\{-1,+1\}^N$ for which its output
$\widehat\w=(\widehat\x,\widehat\z)$ satisfies
\[
 \E\norm{\widehat\x-R\u_\sigma}^2
 \ge
 \frac{R^2}{4},
\]
where the expectation is taken over the internal randomness of the method.
\end{theorem}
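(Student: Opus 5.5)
The plan is to use Yao's principle. We fix the internal random seed $\omega$ of the method and consider the resulting deterministic method run against $\sigma\sim\operatorname{Unif}\{-1,+1\}^N$. If we show that for every fixed $\omega$,
\[
\Pr_\sigma\!\left(\norm{\widehat\x-R\u_\sigma}<\tfrac{R}{\sqrt2}\right)\le \tfrac12,
\]
then
\[
\E_\sigma\E_\omega\norm{\widehat\x-R\u_\sigma}^2\ge \tfrac12\cdot\tfrac{R^2}{2}=\tfrac{R^2}{4}.
\]
By averaging, some fixed $\sigma$ then satisfies the claim. Infeasible outputs only help, since their error is infinite, or we can ignore them: the bound concerns $\widehat\x$ anyway.

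For a fixed seed, the key object is the \emph{transcript} of the interaction. First-order answers follow a $\sigma$-independent rule, so the entire run is a deterministic function of the sequence of sparse-projection answers. We compare with a ``simulated'' run in which every projection call is assumed to return a non-target vertex. Under this assumption each answer is one of at most $2N$ values, indexed by a pair $(i,\sigma_i)$: the first block is determined by the input $\p$, and the second block is $\frac{R}{\sqrt2}\sigma_i\e_i$. The set of possible non-target transcripts is therefore a tree of depth $T$ and branching at most $2N$, with at most $(2N)^T$ leaves.

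Each node of this tree corresponds to one fixed query input $(\p,\z)$, since that input is determined by the previous answers. Each leaf corresponds to one fixed output $\widehat\x$. We then apply a union bound over the tree.
\begin{itemize}
\item The event that the actual run ever receives the target answer is contained in the union, over all nodes, of the events that the oracle returns $\s_\sigma^*$ on that node's fixed input. By Lemma~\ref{lem:simple-randomized-concentration}, this union has probability at most $T(2N)^T\eta_N$.
\item Off this event, the actual run follows some leaf. The output is then close to $R\u_\sigma$ only if some leaf's fixed $\widehat\x$ satisfies the closeness event. By the second part of the lemma, this has probability at most $(2N)^T\eta_N$.
\end{itemize}
The total failure probability is thus at most $(T+1)(2N)^T\eta_N$.

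The remaining step, and the main quantitative obstacle, is checking that this is at most $1/2$ under $T\le N/(128\log(2N))$. We have
\[
(2N)^T\le e^{N/128},\qquad T+1\le N,\qquad \eta_N\le 2e^{-N/4}\ \text{(as } 2^{-N}\le e^{-N/4}\text{)}.
\]
So the failure probability is at most $2Ne^{N/128-N/4}$, and this is below $1/2$ once $N\ge8$. If the constant turns out to be tight for small $N$, we can verify the case $T=0$ separately, or note that for $N<128\log(2N)$ we have $T=0$ and only the second bound is needed: $e^{-N/4}+2^{-N}\le 1/2$ for $N\ge8$.

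One subtlety deserves care. Conditional on no target answer, the actual answers must coincide with the simulated ones. This holds because the tie-breaking rule makes a non-target answer a function of the input and of $(i,\sigma_i)$, and the true $\sigma$ selects exactly one branch of the tree. The union bound over all fixed nodes is therefore legitimate, even though the path actually taken depends on $\sigma$.
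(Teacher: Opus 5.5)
Your proposal is correct and follows essentially the same route as the paper: fix the seed, build the tree of non-target transcripts with branching at most $2N$, union-bound the target-return events at query nodes and the closeness events at terminal nodes using Lemma~\ref{lem:simple-randomized-concentration}, and then average over seeds to extract a fixed $\sigma$. The one real difference is bookkeeping, and there your numerics need a patch: the paper bounds the total node count by $\sum_{d=0}^{T}(2N)^d\le 2(2N)^T$, which gives failure probability at most $1/2$ directly for every $N\ge8$, whereas your estimate $2Ne^{-31N/128}$ actually exceeds $1/2$ for small $N$ (about $2.3$ at $N=8$), so your claim that it is below $1/2$ once $N\ge8$ is false, and your fallback is needed rather than optional (for $T\ge1$ you have $2N\le e^{N/128}$, so the bound is at most $e^{-30N/128}<1/2$, and for $T=0$ only the single leaf event remains).
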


\begin{proof}
Fix in advance the entire sequence of random bits that the method may use,
independently of $\sigma$.  The method is then deterministic and still makes
at most $T$ sparse-projection calls.

Consider the rooted interaction tree obtained by following the method as long as every sparse-projection call returns a non-target vertex. Since the first-order answer rule is independent of \(\sigma\), each history of non-target answers, together with the fixed random bits, determines the next sparse-projection input. At depth \(d\), each node represents a possible history of \(d\) such answers and is either terminal, if the method stops, or has at most \(2N\) children, corresponding to the possible non-target answers to its next query (as we observed above). Thus, the total number of nodes through depth \(T\), including terminal histories, is at most

$$
\sum_{d=0}^{T}(2N)^d \le 2(2N)^T .
$$

Associate a bad event with every node of this tree.  At a sparse-projection
node, the bad event is that the target would be returned at its fixed input.
At a terminal node, whose first output block $\x$ is fixed, the bad event is
\[
 \norm{\x-R\u_\sigma}<\frac{R}{\sqrt2}.
\]
By Lemma~\ref{lem:simple-randomized-concentration}, every such event has
probability at most $\eta_N$.  A union bound over the tree therefore gives
\begin{align}\label{eq:randomized-tree-bad-event}
 \Pr_\sigma\{\text{some bad event occurs}\}
 \le
 2(2N)^T\eta_N.
\end{align}
By the assumed bound on $T$,
$(2N)^T\le e^{N/128}$, and hence
\[
 2(2N)^T\eta_N
 \le
 2\left(
   e^{-31N/128}
   +e^{-(\log 2-1/128)N}
 \right)
 \le
 \frac12,
\]
where the last inequality holds for every $N\ge8$.

If no bad event occurs, the actual interaction follows the non-target tree
to a terminal history and its output satisfies
\[
 \norm{\widehat\x-R\u_\sigma}
 \ge
 \frac{R}{\sqrt2}.
\]
Consequently, for every fixed realization of the method's random bits,
\[
 \E_\sigma\norm{\widehat\x-R\u_\sigma}^2
 \ge
 \frac{R^2}{2}
 \Pr_\sigma\!\left(
  \norm{\widehat\x-R\u_\sigma}\ge\frac{R}{\sqrt2}
 \right)
 \ge
 \frac{R^2}{4}.
\]
Averaging over the method's random bits and interchanging the two
expectations, we have that some fixed $\sigma$ indeed satisfies the same bound in expectation
over the method's randomness. 
\end{proof}

\subsection{The randomized lower bounds}

\begin{theorem}[The nonsmooth case]
\label{thm:simple-randomized-nonsmooth-lb}
Fix $\alpha,G>0$ and an integer $N\ge8$.  For every randomized method using
at most $\lfloor N/(128\log(2N))\rfloor$ calls to a $1$-fold sparse-projection
oracle, there exists an instance in dimension $n=2N$ with
compact feasible set, an $\alpha$-strongly convex objective whose feasible
subgradients have norm at most $G$, and a unique vertex minimizer $\w^*$%
, such that 
\[
 \E[F(\widehat\w)-F(\w^*)]
 \ge\frac{G^2}{128\alpha N},
\]
where the expectation is over the method's internal randomness.
Arbitrarily many first-order calls between sparse projections are allowed.

Consequently, there is a universal constant $c_0>0$ such that, whenever
$G^2/(\alpha\epsilon)$ is sufficiently large, every randomized method that
guarantees expected error at most $\epsilon$ on all $s=k=1$ instances
requires at least
$c_0\frac{G^2}{
 \alpha\epsilon\,\log(2G^2/(\alpha\epsilon))}$
sparse projections in the worst case.
\end{theorem}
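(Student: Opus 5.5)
The plan is to combine Theorem~\ref{thm:simple-randomized-geometric-hiding} with the same kind of objective used in Theorem~\ref{thm:nonsmooth-sparse-lb}: a strongly convex quadratic plus a nonsmooth penalty on the $\z$-block. The penalty makes the target vertex $\s_\sigma^*$ the unique constrained minimizer, yet the objective does not depend on $\sigma$. Concretely, on the randomized construction with radius $R$ to be tuned, I would take
\[
 f(\x,\z):=\frac{\alpha}{2}\bigl(\norm{\x}^2+\norm{\z}^2\bigr)+c\norm{\z},
\]
with $c=3G/4$ and $R=G/(4\alpha\sqrt N)$. The first-order oracle returns $(\alpha\x,\alpha\z+c\z/\norm{\z})$ when $\z\neq0$ and $(\alpha\x,0)$ when $\z=0$. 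This rule is manifestly independent of $\sigma$, as Theorem~\ref{thm:simple-randomized-geometric-hiding} requires. Strong convexity with parameter $\alpha$ is immediate. Since $\mK_\sigma$ lies in the ball of radius $R$, every feasible subgradient has norm at most $\alpha R+c=G/(4\sqrt N)+3G/4\le G$.

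Next I would certify that $\s_\sigma^*=(R\u_\sigma,0)$ is the unique minimizer over $\mK_\sigma$. Using the subgradient $\g^*=(\alpha R\u_\sigma,\,c\,\u_\sigma)\in\partial f(\s_\sigma^*)$, which is valid since $\norm{\u_\sigma}=1$, it suffices to check $\ip{\g^*}{\s-\s_\sigma^*}\ge0$ for every non-target vertex $\s=(R\v/\sqrt2,R\sigma_i\e_i/\sqrt2)\in\mD_i$. Using $\ip{\u_\sigma}{\v}\ge-1$ and $\ip{\u_\sigma}{\sigma_i\e_i}=1/\sqrt N$, this inner product is at least
\[
 -\alpha R^2\Bigl(1+\tfrac{1}{\sqrt2}\Bigr)+\frac{cR}{\sqrt{2N}} .
\]
With the chosen constants this is $\frac{GR}{\sqrt N}\bigl(\tfrac{3}{4\sqrt2}-\tfrac{1}{4}(1+\tfrac1{\sqrt2})\bigr)>0$. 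Linearity then extends the inequality to all of $\mK_\sigma$, and strong convexity gives uniqueness.

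With the instance in place, I would apply Theorem~\ref{thm:simple-randomized-geometric-hiding}. It yields a fixed $\sigma$ for which $\E\norm{\widehat\x-R\u_\sigma}^2\ge R^2/4$. Infeasible outputs have infinite error, so we may restrict to feasible ones. For those, strong convexity together with the optimality of $\w^*=\s_\sigma^*$ gives
\[
 F(\widehat\w)-F(\w^*)\ge\frac{\alpha}{2}\norm{\widehat\w-\w^*}^2\ge\frac{\alpha}{2}\norm{\widehat\x-R\u_\sigma}^2 .
\]
Taking expectations, $\E[F(\widehat\w)-F(\w^*)]\ge\alpha R^2/8=G^2/(128\alpha N)$.

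For the ``consequently'' part, I would choose $N$ to be the largest integer with $G^2/(128\alpha N)>\epsilon$, so that $N=\Theta(G^2/(\alpha\epsilon))$ and $N\ge8$ once $G^2/(\alpha\epsilon)$ is large. Any method with at most $\lfloor N/(128\log(2N))\rfloor$ projections then fails on some instance. Hence at least $c_0\,G^2/(\alpha\epsilon\log(2G^2/(\alpha\epsilon)))$ projections are needed, absorbing the monotonicity of $N/\log N$ into $c_0$.

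The main obstacle is the constant balancing in the optimality certificate: the penalty $c$ must beat the worst-case $\x$-block term, of order $\alpha R^2$, by a margin of order $1/\sqrt N$, while keeping $\alpha R+c\le G$. This is what forces $R\propto G/(\alpha\sqrt N)$ and produces the $1/N$ gap. Everything else is a direct invocation of the previous theorem.
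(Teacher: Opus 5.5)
Your proposal is correct and follows the paper's proof essentially step for step. It uses the same objective $\frac{\alpha}{2}(\norm{\x}^2+\norm{\z}^2)+\frac{3G}{4}\norm{\z}$ with $R=G/(4\alpha\sqrt N)$, the same $\sigma$-independent subgradient rule, the same certificate $\g^*=(\alpha R\u_\sigma,\frac{3G}{4}\u_\sigma)$ with margin $(\sqrt2-1)G^2/(16\alpha N)$, and the same combination of strong convexity with Theorem~\ref{thm:simple-randomized-geometric-hiding} to obtain $\alpha R^2/8=G^2/(128\alpha N)$; your explicit choice $N=\Theta(G^2/(\alpha\epsilon))$ for the ``consequently'' clause fills in a step the paper leaves implicit.
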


\begin{proof}
Fix a randomized method.  Use the preceding construction with
$R=\frac{G}{4\alpha\sqrt N}$.
Consider the objective function
\[
 f(\x,\z)
 :=
 \frac{\alpha}{2}\bigl(\norm\x^2+\norm\z^2\bigr)
 +\frac{3G}{4}\norm\z,
 \qquad
 \x,\z\in\reals^N,
\]
and note it is indeed $\alpha$-strongly convex.  Moreover, every
$\g\in\partial f(\x,\z)$ can be written as
\[
 \g
 =
 \alpha(\x,\z)
 +\left(0,\frac{3G}{4}\v\right)
 \qquad\text{for some }\norm\v\le1.
\]
Since every point of the feasible set $\mK_\sigma$ has norm at most $R$, every feasible
subgradient therefore satisfies
\[
 \norm\g
 \le
 \alpha R+\frac{3G}{4}
 =
 \frac{G}{4\sqrt N}+\frac{3G}{4}
 \le G.
\]

To specify the first-order oracle, let it return
$\left(
  \alpha\x,
  \alpha\z+\frac{3G}{4}\frac{\z}{\norm\z}
 \right)
$
when $\z\ne0$, and $(\alpha\x,0)$ when $\z=0$.  This is a valid
subgradient rule that depends only on the query $(\x,\z)$ and not on
$\sigma$.

We next verify that $\w^*=\s_{\sigma}^*=(R\u_\sigma,0)$
is the unique minimizer over $\mK_\sigma$.  To certify its optimality,
consider the corresponding subgradient
$\g^*
 :=
 \left(
  \alpha R\u_\sigma,\frac{3G}{4}\u_\sigma
 \right)
 \in\partial f(\w^*)$.
For every non-target vertex
$\s=
 \left(
  \frac{R}{\sqrt2}\v,
  \frac{R}{\sqrt2}\sigma_i\e_i
 \right)$
we have,
\[
 \begin{aligned}
 \ip{\g^*}{\s-\w^*}
 &=
 \alpha R^2
 \left(
  \frac1{\sqrt2}\ip{\u_\sigma}{\v}-1
 \right)
 +\frac{3GR}{4\sqrt{2N}}\\
 &\ge
 -\alpha R^2\left(1+\frac1{\sqrt2}\right)
 +\frac{3GR}{4\sqrt{2N}}\\
 &=
 \frac{(\sqrt2-1)G^2}{16\alpha N}
 >0,
 \end{aligned}
\]
and so $\w^*=\s_{\sigma}^*$ is indeed the unique minimizer.

Thus, for every feasible output $\widehat\w=(\widehat\x,\widehat\z)$, strong
convexity implies that
\[
 \begin{aligned}
 F(\widehat\w)-F(\w^*)=
 f(\widehat\w)-f(\w^*)\ge
 \frac{\alpha}{2}\norm{\widehat\w-\w^*}^2\ge
 \frac{\alpha}{2}\norm{\widehat\x-R\u_\sigma}^2.
 \end{aligned}
\]
The same inequality holds trivially for an infeasible output because its
composite error is infinite.  

Taking expectations and using Theorem~\ref{thm:simple-randomized-geometric-hiding} indeed gives
\[
 \E\bigl[F(\widehat\w)-F(\w^*)\bigr]
 \ge
 \frac{\alpha}{2}
 \E\norm{\widehat\x-R\u_\sigma}^2
 \ge
 \frac{\alpha R^2}{8}
 =
 \frac{G^2}{128\alpha N},
\]
which proves the theorem.
\end{proof}

\begin{remark}
As in the deterministic construction of Theorem~\ref{thm:nonsmooth-sparse-lb},
the bounded-subgradient property here holds on the entire Euclidean ball
containing the feasible set. Indeed,
$\mK_\sigma\subseteq\{\w:\norm{\w}\le R\}$ and, for every
$\norm{\w}\le R$ and every $\g\in\partial f(\w)$,
\[
 \norm{\g}
 \le \alpha R+\frac{3G}{4}
 =\frac{G}{4\sqrt N}+\frac{3G}{4}
 \le G.
\]
Thus the randomized lower bound also holds under the same type of
neighborhood-access assumption discussed after
Theorem~\ref{thm:nonsmooth-sparse-lb}, and used by MOPES~\cite{MOPES}.
\end{remark}

\begin{theorem}[The smooth case]
\label{thm:simple-randomized-smooth-lb}
Fix $\alpha,\beta>0$, let $\kappa:=\beta/\alpha$,
$N:=\left\lfloor(\kappa-2)/32\right\rfloor$,
and suppose that $\kappa\ge258$.  For every randomized method using at most
$\lfloor N/(128\log(2N))\rfloor$ calls to a $1$-fold sparse-projection
oracle, there exists an instance in dimension $n=2N$ with an
$\alpha$-strongly convex and $\beta$-smooth quadratic objective, a unique
vertex minimizer $\w^*$, and feasible-set
diameter $D \leq 2$, %
such that
\[
 \E[F(\widehat\w)-F(\w^*)]
 \ge\frac{\alpha}{8}.
\]
Consequently, for $\epsilon=\alpha/16$, every randomized method that
guarantees expected error at most $\epsilon$ on all such instances requires
$\Omega\left(
  (\beta/\alpha)/\log(2\beta/\alpha)
 \right)$
calls to the $1$-fold sparse-projection oracle in the worst case.
\end{theorem}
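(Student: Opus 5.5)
The plan is to reuse the randomized geometry of Theorem~\ref{thm:simple-randomized-geometric-hiding} with $R=1$, and to combine it with a fixed quadratic objective. This mirrors the way Theorem~\ref{thm:smooth-sparse-lb} was obtained from the deterministic geometry. Since $\kappa\ge258$, we get $N=\lfloor(\kappa-2)/32\rfloor\ge8$, so the geometric theorem applies. The feasible set $\mK_\sigma$ lies in the unit ball, so its diameter is at most $2$.

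\textbf{The objective.} On $\reals^{2N}\ni(\x,\z)$ I would take
\[
 f(\x,\z):=\frac{\beta}{2}\left(a\norm\x^2+2\gamma\ip\x\z+b\norm\z^2\right),
 \qquad a=\frac2\kappa,\quad b=1-\frac1\kappa,\quad \gamma=\frac{\sqrt{\kappa-2}}{\kappa}.
\]
\emph{Spectrum.} The Hessian is $\beta$ times the $2\times2$ matrix $\begin{pmatrix}a&\gamma\\ \gamma&b\end{pmatrix}$ tensored with $\Id_N$. That matrix has trace $1+1/\kappa$ and determinant $ab-\gamma^2=\frac2\kappa(1-\frac1\kappa)-\frac{\kappa-2}{\kappa^2}=\frac1\kappa$. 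Its eigenvalues are therefore $1$ and $1/\kappa$, so $f$ is exactly $\alpha$-strongly convex and $\beta$-smooth.

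\emph{Independence from $\sigma$.} The gradient $\beta(a\x+\gamma\z,\ \gamma\x+b\z)$ depends only on the query. Hence the first-order answers are generated by a rule independent of $\sigma$, as the geometric theorem requires.

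\textbf{Optimality of the target vertex.} I need $\s^*_\sigma=(\u_\sigma,0)$ to be the unique minimizer over $\mK_\sigma$. Here $\nabla f(\s^*_\sigma)=\beta(a\u_\sigma,\gamma\u_\sigma)$. Take a non-target vertex $\s=(\v/\sqrt2,\sigma_i\e_i/\sqrt2)$ with $\norm\v=1$. Using $\ip{\u_\sigma}{\sigma_i\e_i}=1/\sqrt N$ and $\ip{\u_\sigma}{\v}\ge-1$,
\[
 \ip{\nabla f(\s^*_\sigma)}{\s-\s^*_\sigma}
 =\beta\left(a\Bigl(\tfrac{\ip{\u_\sigma}{\v}}{\sqrt2}-1\Bigr)+\frac{\gamma}{\sqrt{2N}}\right)
 \ge\beta\left(-\frac{2(1+1/\sqrt2)}{\kappa}+\frac{\gamma}{\sqrt{2N}}\right).
\]
Since $N\le(\kappa-2)/32$, we get $\gamma/\sqrt{2N}\ge\sqrt{\kappa-2}/\bigl(\kappa\sqrt{(\kappa-2)/16}\bigr)=4/\kappa$. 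This exceeds $2(1+1/\sqrt2)/\kappa\approx3.41/\kappa$, so the expression is strictly positive.

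By linearity the directional derivative is then strictly positive toward every point of $\mK_\sigma\setminus\{\s^*_\sigma\}$. Together with strong convexity, this gives uniqueness of the minimizer. Balancing the constants here, so that the positive cross term $\gamma/\sqrt{2N}$ beats the loss from the $\x$-block while keeping exact extreme eigenvalues $1$ and $1/\kappa$, is the only delicate step. It is what dictates the choice $N\approx\kappa/32$.

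\textbf{Concluding.} For a feasible output, strong convexity and first-order optimality give
\[
 F(\widehat\w)-F(\w^*)\ \ge\ \frac\alpha2\norm{\widehat\w-\w^*}^2\ \ge\ \frac\alpha2\norm{\widehat\x-\u_\sigma}^2 .
\]
For an infeasible output the error is infinite, so the same bound holds trivially. Theorem~\ref{thm:simple-randomized-geometric-hiding} supplies a fixed $\sigma$ with $\E\norm{\widehat\x-\u_\sigma}^2\ge1/4$. Taking expectations yields $\E[F(\widehat\w)-F(\w^*)]\ge\alpha/8$.

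For the consequence, set $\epsilon=\alpha/16<\alpha/8$. Any method achieving expected error $\epsilon$ must then use more than $\lfloor N/(128\log(2N))\rfloor$ calls. Since $N=\Theta(\kappa)$, this count is $\Omega\bigl((\beta/\alpha)/\log(2\beta/\alpha)\bigr)$.
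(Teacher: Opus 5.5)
Your proposal is correct and matches the paper's proof essentially line for line. It uses the same quadratic with coefficients $2/\kappa$, $1-1/\kappa$ and $\gamma=\sqrt{\kappa-2}/\kappa$, the same trace/determinant check of the spectrum, and the same optimality certificate at $(\u_\sigma,0)$ via $\gamma/\sqrt{2N}\ge 4/\kappa>2(1+1/\sqrt2)/\kappa$, before combining strong convexity with Theorem~\ref{thm:simple-randomized-geometric-hiding} at $R=1$ exactly as the paper does.
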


\begin{proof}
Fix a randomized method.  Since $\kappa\ge258$, we have $N\ge8$.
Use the preceding construction with $R=1$, set
$\w^*=\s_{\sigma}^*=(\u_\sigma,0)$, and define $\gamma:=\sqrt{\kappa-2}/\kappa$.
Consider the quadratic objective
\[
 f(\x,\z)
 :=
 \frac{\beta}{2}\left(
  \frac{2}{\kappa}\norm\x^2
  +2\gamma\ip\x\z
  +\left(1-\frac1\kappa\right)\norm\z^2
 \right),
 \qquad
 \x,\z\in\reals^N.
\]
Its gradient is
\[
 \nabla f(\x,\z)
 =
 \beta\left(
  \frac2\kappa\x+\gamma\z,\,
  \gamma\x+\left(1-\frac1\kappa\right)\z
 \right).
\]
In particular, the objective and its gradient are independent of $\sigma$,
as required by Theorem~\ref{thm:simple-randomized-geometric-hiding}.

The Hessian with respect to the two blocks $(\x,\z)$ is
\[
 \nabla^2 f
 =
 \beta
 \begin{pmatrix}
  \frac2\kappa\Id_N & \gamma\Id_N\\
  \gamma\Id_N & \left(1-\frac1\kappa\right)\Id_N
 \end{pmatrix}.
\]
The trace and determinant of the $2\times2$ block, without the factor
$\beta$, are respectively
\[
 1+\frac1\kappa
 \qquad\text{and}\qquad
 \frac2\kappa\left(1-\frac1\kappa\right)-\gamma^2
 =
 \frac1\kappa.
\]
Its two distinct eigenvalues are therefore $1$ and $1/\kappa$.  Thus the exact
strong-convexity and smoothness parameters are indeed 
$\beta/\kappa=\alpha$ and $\beta$,
respectively.

We next verify that $\w^*=\s_{\sigma}^*$ is the unique minimizer over the feasible set $\mK_\sigma$.  At
$\w^*$,
\[
 \nabla f(\w^*)
 =
 \beta\left(
  \frac2\kappa\u_\sigma,\,
  \gamma\u_\sigma
 \right).
\]
Since $N\le(\kappa-2)/32$,
we have
\[
 \frac{\gamma}{\sqrt N}
 =
 \frac{\sqrt{\kappa-2}}{\kappa\sqrt N}
 \ge\frac{4\sqrt2}{\kappa}.
\]
For every non-target vertex
\[
 \s
 =
 \left(
  \frac1{\sqrt2}\v,
  \frac1{\sqrt2}\sigma_i\e_i
 \right),
 \qquad \norm\v=1,
\]
using $\ip{\u_\sigma}{\sigma_i\e_i}=1/\sqrt N$, we have
\[
 \begin{aligned}
 \ip{\nabla f(\w^*)}{\s-\w^*}
 &=
 \beta\left[
  \frac2\kappa
  \left(
   \frac1{\sqrt2}\ip{\u_\sigma}{\v}-1
  \right)
  +\frac{\gamma}{\sqrt{2N}}
 \right]\\
 &\ge
 \frac{\beta}{\kappa}
 \left[
  -2\left(1+\frac1{\sqrt2}\right)+4
 \right] >0.
 \end{aligned}
\]
Thus the constrained first-order optimality condition holds at $\w^*$, and
strong convexity makes it the unique minimizer.

For every feasible output
$\widehat\w=(\widehat\x,\widehat\z)$, strong convexity and the optimality
condition give
\[
 \begin{aligned}
 F(\widehat\w)-F(\w^*)
 &=
 f(\widehat\w)-f(\w^*)\ge
 \frac{\alpha}{2}\norm{\widehat\w-\w^*}^2
 \ge
 \frac{\alpha}{2}\norm{\widehat\x-\u_\sigma}^2.
 \end{aligned}
\]
The same inequality holds trivially for an infeasible output because its
composite error is infinite.  Taking expectations and applying
Theorem~\ref{thm:simple-randomized-geometric-hiding}, we obtain, for some
fixed $\sigma$,
\[
 \E\bigl[F(\widehat\w)-F(\w^*)\bigr]
 \ge
 \frac{\alpha}{2}
 \E\norm{\widehat\x-\u_\sigma}^2
 \ge
 \frac{\alpha}{8},
\]
which proves the theorem.

\end{proof}

\section{WPO-based Upper Bounds}\label{sec:qg-upper-bounds}
In this section we turn to consider upper bounds for Problem \eqref{eq:optProb}. As opposed to our lower bounds constructions that considered stronger assumptions than those described in the Introduction, here we consider the original weaker assumptions given in the Introduction. In particular $F$ corresponds to a general convex composite model, i.e.,  $\R$ need not be an indicator for a convex set, and $f$ is not assumed strongly convex and we do not assume a unique minimizer; we only assume $F$ satisfies the quadratic growth condition \eqref{eq:qg-condition}, but can have more than one minimizer. 
Throughout this section $\mA$ denotes a weak proximal oracle as defined in Definition~\ref{def:wpo}.

We present algorithms for both the smooth and nonsmooth regimes that access the function $\R$ only through the weak proximal oracle $\mA$. For the smooth case the algorithm and its linear convergence rate result are not new and have appeared in several previous works, e.g., \cite{garber2021improved, garber2019fast, garber2023faster, garber2025weak}. Given that both the description of the algorithm and the convergence analysis are very short, we include it for completeness. For the nonsmooth case, while the presented algorithm and convergence analysis are based on previously introduced techniques, to the best of our knowledge, such result was not previously known, and the technical details are far more intricate than in the smooth case.

\subsection{The smooth case}
The algorithm is given as Algorithm \ref{alg:qg-smooth-wpo}.
\begin{algorithm}[t]
\caption{Weak proximal gradient method under quadratic growth}
\label{alg:qg-smooth-wpo}
\begin{algorithmic}[1]
\Require $\x_{\rm in}\in\dom(\R)$, $\alpha,\beta>0$, number of iterations
$T\ge0$, weak proximal oracle $\mA$
\State $\x_0\gets\x_{\rm in}$
\For{$t=0,\ldots,T-1$}
  \State $\v_t\gets
  \mA\left(
    \x_t-\dfrac{2}{\alpha}\nabla f(\x_t),
    \dfrac{\alpha}{4}
  \right)$
  \State $\x_{t+1}\gets
  \left(1-\dfrac{\alpha}{2\beta}\right)\x_t
  +\dfrac{\alpha}{2\beta}\v_t$
\EndFor
\State \Return $\x_T$
\end{algorithmic}
\end{algorithm}

\begin{theorem}
\label{thm:qg-smooth-wpo-upper}
For any $\x_{\rm in}\in\dom(\R)$ and any integer $T\ge0$, let $\x_T$
denote the output of Algorithm \ref{alg:qg-smooth-wpo}. Then
\begin{align}\label{eq:qg-smooth-contraction}
 F(\x_T)-F^*
 \le
 \left(1-\frac{\alpha}{4\beta}\right)^T
 \bigl(F(\x_{\rm in})-F^*\bigr).
\end{align}
In particular, Algorithm \ref{alg:qg-smooth-wpo} uses  $T$ weak
proximal oracle calls and $T$ gradient evaluations.
\end{theorem}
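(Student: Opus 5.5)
We prove a one-step contraction and then iterate it $T$ times. Fix $t$ and abbreviate $\eta:=\alpha/(2\beta)\le 1/2$. Let $\x^*\in\mX^*$ be a point with $\norm{\x_t-\x^*}=\dist(\x_t,\mX^*)$; it exists because $\mX^*$ is closed and nonempty.

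\textbf{Step 1 (smoothness and convexity of $\R$).} Since $\x_{t+1}=\x_t+\eta(\v_t-\x_t)$ and $\v_t,\x_t\in\dom(\R)$, $\beta$-smoothness of $f$ gives
\[
f(\x_{t+1})\le f(\x_t)+\eta\ip{\nabla f(\x_t)}{\v_t-\x_t}+\frac{\beta\eta^2}{2}\norm{\v_t-\x_t}^2 .
\]
Convexity of $\R$ gives $\R(\x_{t+1})\le(1-\eta)\R(\x_t)+\eta\R(\v_t)$. Note that $\beta\eta^2/2=\eta\alpha/4$. Adding the two bounds,
\[
F(\x_{t+1})\le F(\x_t)+\eta\Bigl[\ip{\nabla f(\x_t)}{\v_t-\x_t}+\frac{\alpha}{4}\norm{\v_t-\x_t}^2+\R(\v_t)-\R(\x_t)\Bigr].
\]

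\textbf{Step 2 (the WPO inequality).} Set $\y_t:=\x_t-\frac{2}{\alpha}\nabla f(\x_t)$. Expanding the square and dropping the term that does not depend on the argument,
\[
\frac{\alpha}{4}\norm{\z-\y_t}^2=\ip{\nabla f(\x_t)}{\z-\x_t}+\frac{\alpha}{4}\norm{\z-\x_t}^2+\text{const}.
\]
The bracket in Step 1 therefore equals $\frac{\alpha}{4}\norm{\v_t-\y_t}^2+\R(\v_t)-\R(\x_t)-\text{const}$. Definition~\ref{def:wpo} with $\lambda=\alpha/4$ lets us replace $\v_t$ by $\x^*$ in this expression. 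The bracket is thus at most
\[
\ip{\nabla f(\x_t)}{\x^*-\x_t}+\frac{\alpha}{4}\norm{\x^*-\x_t}^2+\R(\x^*)-\R(\x_t).
\]

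\textbf{Step 3 (convexity of $f$ and quadratic growth).} Convexity of $f$ gives $\ip{\nabla f(\x_t)}{\x^*-\x_t}\le f(\x^*)-f(\x_t)$. Hence the bracket is at most $-(F(\x_t)-F^*)+\frac{\alpha}{4}\dist(\x_t,\mX^*)^2$. Since $\x_t\in\dom(\R)$ (all iterates are convex combinations of points of $\dom(\R)$), \eqref{eq:qg-condition} gives $\frac{\alpha}{4}\dist(\x_t,\mX^*)^2\le\frac12(F(\x_t)-F^*)$. So the bracket is at most $-\frac12(F(\x_t)-F^*)$, and therefore
\[
F(\x_{t+1})-F^*\le\Bigl(1-\frac{\eta}{2}\Bigr)(F(\x_t)-F^*)=\Bigl(1-\frac{\alpha}{4\beta}\Bigr)(F(\x_t)-F^*).
\]
Induction over $t=0,\ldots,T-1$ yields \eqref{eq:qg-smooth-contraction}. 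Each iteration uses exactly one gradient evaluation and one oracle call, which gives the count.

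The only delicate point is Step 2. The WPO inequality must be used only against optimal points, which is all it provides, so the comparison point has to be an element of $\mX^*$. Choosing the closest one is what lets quadratic growth absorb the $\frac{\alpha}{4}\norm{\x^*-\x_t}^2$ term, and the constants $2/\alpha$ and $\alpha/4$ are tuned so that exactly half of the gap remains.
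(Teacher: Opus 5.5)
Your proof is correct and follows essentially the same route as the paper. Both use the descent bound from $\beta$-smoothness and convexity of $\R$, the expanded WPO guarantee with a closest optimizer as the comparison point, convexity of $f$, and quadratic growth to absorb the $\frac{\alpha}{4}\dist(\x_t,\mX^*)^2$ term; the only difference is the cosmetic order in which you apply smoothness and expand the WPO inequality.
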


\begin{proof}
Fix an iteration $t$ and choose a closest optimizer
$\x_t^*\in\argmin_{\x^*\in\mX^*}\norm{\x_t-\x^*}$. Applying the
weak proximal guarantee \eqref{eq:wpo-def} to the oracle call in Algorithm  \ref{alg:qg-smooth-wpo} with comparison
point $\x_t^*$ gives,
\[
 \frac{\alpha}{4}
 \norm{\v_t-\left(\x_t-\frac{2}{\alpha}\nabla f(\x_t)\right)}^2+\R(\v_t)
 \le
 \frac{\alpha}{4}
 \norm{\x_t^*-\left(\x_t-\frac{2}{\alpha}\nabla f(\x_t)\right)}^2
 +\R(\x_t^*).
\]
Expanding the two squared norms gives,
\begin{align}\label{eq:qg-smooth-wpo-expanded}
 &\ip{\nabla f(\x_t)}{\v_t-\x_t}
  + \frac{\alpha}{4}\Vert{\v_t-\x_t}\Vert^2 +\R(\v_t)
  \nonumber \\
 &\leq \ip{\nabla f(\x_t)}{\x_t^*-\x_t}
  + \frac{\alpha}{4}\Vert{\x_t^*-\x_t}\Vert^2 +\R(\x_t^*)
\end{align}
By $\beta$-smoothness of $f$, convexity of $\R$, and the update of
$\x_{t+1}$ in Algorithm~\ref{alg:qg-smooth-wpo},
\begin{align*}
 F(\x_{t+1})
 &\le
 f(\x_t)
 +\frac\alpha{2\beta}\ip{\nabla f(\x_t)}{\v_t-\x_t}
 +\frac{\alpha^2}{8\beta}\norm{\v_t-\x_t}^2 \\
 &~~+\left(1-\frac\alpha{2\beta}\right)\R(\x_t)
 +\frac\alpha{2\beta}\R(\v_t) \\
 &\leq 
 f(\x_t)
 +\frac\alpha{2\beta}\ip{\nabla f(\x_t)}{\x_t^*-\x_t}
 +\frac{\alpha^2}{8\beta}\norm{\x_t^*-\x_t}^2 \\
 &~~+\left(1-\frac\alpha{2\beta}\right)\R(\x_t)
 +\frac\alpha{2\beta}\R(\x_t^*),  
\end{align*}
where the last inequality is due to \eqref{eq:qg-smooth-wpo-expanded}.

Convexity of $f$ also gives
\[
 f(\x_t)-f(\x_t^*)
 +\ip{\nabla f(\x_t)}{\x_t^*-\x_t}\le0.
\]
Subtracting $F^*=F(\x_t^*)$ from both sides and using the last inequality yields
\begin{align*}
 F(\x_{t+1})-F^*
 &\le
 \left(1-\frac\alpha{2\beta}\right)
 \bigl(F(\x_t)-F^*\bigr)
 +\frac{\alpha^2}{8\beta}\norm{\x_t-\x_t^*}^2.
\end{align*}
Finally, quadratic growth gives
\[
 F(\x_{t+1})-F^*
 \le
 \left(1-\frac\alpha{4\beta}\right)
 \bigl(F(\x_t)-F^*\bigr),
\]
which proves the theorem.
\end{proof}

\begin{remark}[use of line-search in Algorithm \ref{alg:qg-smooth-wpo}]
Algorithm \ref{alg:qg-smooth-wpo} only requires the smoothness parameter $\beta$ in order to set the convex combination parameter when forming the new point $\x_{t+1}$. It is a simple observation that replacing this parameter with exact line-search on the $[0,1]$ interval, i.e., taking $\x_{t+1}\gets(1-\eta_t)\x_t+\eta_t\v_t$ with $\eta_t\gets\argmin_{\eta\in[0,1]}F((1-\eta)\x_t + \eta\v_t)$, does not change the consequences of Theorem \ref{thm:qg-smooth-wpo-upper}.
\end{remark}

\subsection{The nonsmooth case}
We now turn to consider a WPO-based upper bound for the nonsmooth case, which is significantly more intricate than the previous smooth case. Our Algorithm, given below as Algorithm \ref{alg:qg-nonsmooth-wpo}, iteratively invokes a one-stage method  described in Algorithm \ref{alg:qg-sliding-stage}. This one-stage method and its analysis are heavily based on the optimistic primal-dual and linear optimization oracle-based method presented in  \cite{AsgariNeely}. Nevertheless, there are also some important differences: \cite{AsgariNeely} considers the constrained setting only (i.e., $\R$ is an indicator for a convex and compact set), it assumes the subgradients of the objective are globally bounded, and it assumes access to a linear optimization oracle. Also they provide only a $1/\epsilon^2$-type convergence rate, which is understandable given their use of a linear optimization oracle. Here, we assume the more general proximal setting, we do not require a global bound on subgradients,  we assume access through a weak proximal oracle, and we prove a $1/\epsilon$-type convergence rate, under quadratic growth. These differences naturally require certain changes to the algorithm and its analysis. For instance, we add localization steps (the projections onto the set $\mY$ in Algorithm \ref{alg:qg-sliding-stage}) in order to rely only on a local bound on subgradients, rather than a global one. Additionally, while \cite{AsgariNeely} uses a single-loop algorithm and each iteration makes one subgradient oracle call and one linear optimization oracle call, our Algorithm \ref{alg:qg-sliding-stage} uses a double loop in order to balance correctly between the number of calls to the subgradient oracle and the weak proximal oracle.

In what follows, let $\x_{\rm in}\in\dom(\R)$ and let $\Delta_0$ be any known upper bound
satisfying $F(\x_{\rm in})-F^*\le\Delta_0$. Define the initial sublevel set $\mathcal L_0
 :=
 \left\{
  \x\in\dom(\R):F(\x)-F^*\le\Delta_0
 \right\}$.
Assume that, for some $G>0$, $\norm{\g}\le G$ for every
 $\x\in\mathcal L_0$, $\g\in\partial f(\x)$.
Set $D_0:=\sqrt{2\Delta_0/\alpha}$,
 $\mathcal U_0
 :=
 \left\{
  \u\in\mathbb V | 
  \dist(\u,\mathcal L_0)\le D_0
 \right\}$,
and define
$$\overline G_0
 :=
 \max\left\{
  G,\,
  \sup\left\{
   \norm{\g}|
   \u\in\mathcal U_0,
   \g\in\partial f(\u)
  \right\}
 \right\}.$$

\begin{algorithm}[t]
\caption{One-stage nonsmooth weak-proximal primal-dual method}
\label{alg:qg-sliding-stage}
\begin{algorithmic}[1]
\Require center $\c\in\dom(\R)$, radius $D>0$, $G,\overline G_0>0$, integers $T,K\ge1$, weak proximal oracle $\mA$
\State $\eta\gets G/(D\sqrt T)$, $\rho\gets(\overline G_0/D)\sqrt{T/K}$
\State $\mY\gets\{\y:\norm{\y-\c}\le D\}$
\State $\q_1\gets0$, $\y_{1,1}\gets\c$
\For{$t=1,\ldots,T$}
  \State $\v_t\gets\mA\left(\c-\dfrac{\q_t}{2\eta},\eta\right)$
  \For{$s=1,\ldots,K$}
    \State choose $\g_{t,s}\in\partial f(\y_{t,s})$
    \State $\displaystyle
      \y_{t,s+1}\gets
      \Pi_{\mY}\left(
       \frac{\rho K\y_{t,s}+\q_t+\eta\v_t-\g_{t,s}}
       {\rho K+\eta}
      \right)$
  \EndFor
  \State $\displaystyle
    \widehat{\y}_t\gets\frac1K\sum_{s=1}^K\y_{t,s+1}$,
    $\quad
    \q_{t+1}\gets\q_t+\eta(\v_t-\widehat{\y}_t)$,
    $\quad
    \y_{t+1,1}\gets\y_{t,K+1}$
\EndFor
\State \Return $\overline\v_T\gets T^{-1}\sum_{t=1}^T\v_t$
\end{algorithmic}
\end{algorithm}

\begin{lemma}
\label{lem:qg-nonsmooth-sliding-stage}
Let $\c\in\mathcal L_0$, let $D>0$, and suppose that there is
$\x_\c^*\in\mX^*$ such that $\norm{\c-\x_\c^*}\le D$. Set
$\mY:=\{\y:\norm{\y-\c}\le D\}$,
and suppose that $\norm{\g}\le\overline G_0$ for every $\y\in\mY$ and
every $\g\in\partial f(\y)$. Fix positive integers $T,K$ satisfying
$K\ge \overline G_0^2/G^2$,
and run Algorithm~\ref{alg:qg-sliding-stage} with center $\c$, radius $D$,
constants $G,\overline G_0$, and integers $T,K$. If
$6GD/\sqrt T<\Delta_0$,
then its output
$\overline\v_T:=T^{-1}\sum_{t=1}^T\v_t$ belongs to $\mathcal L_0$ and
\begin{align}\label{eq:qg-one-stage-simplified-bound}
 F(\overline\v_T)-F^*
 \le\frac{6GD}{\sqrt T}.
\end{align}
The algorithm uses $T$ weak proximal oracle calls and $TK$ subgradient
calls to $f$.
\end{lemma}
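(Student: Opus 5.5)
The plan is to read Algorithm~\ref{alg:qg-sliding-stage} as an optimistic primal--dual method for the split saddle-point problem
\[
 \min_{\v\in\dom(\R),\,\y\in\mY}\;\max_{\q}\;\Big\{\mathcal{L}(\v,\y,\q):=f(\y)+\R(\v)+\ip{\q}{\v-\y}\Big\},
\]
whose value is $F^*$ because $\x_\c^*\in\mY$. Three identifications drive the argument.
\begin{itemize}
\item Expanding the square in the oracle call gives $\eta\norm{\v-\c+\q_t/(2\eta)}^2=\eta\norm{\v-\c}^2+\ip{\q_t}{\v-\c}+\mathrm{const}$. So \eqref{eq:wpo-def} with comparison point $\x^*:=\x_\c^*$ reads
\[
 \ip{\q_t}{\v_t}+\R(\v_t)+\eta\norm{\v_t-\c}^2\le\ip{\q_t}{\x^*}+\R(\x^*)+\eta\norm{\x^*-\c}^2 .
\]
This is a one-sided ``best-response against $\x^*$'' inequality. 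It is exactly what a follow-the-regularized-leader regret bound needs when the only comparator is $\x^*$.
\item The inner update is a projected subgradient/prox step with step size $1/(\rho K)$ on the convex function $\y\mapsto f(\y)-\ip{\q_t}{\y}+\frac\eta2\norm{\y-\v_t}^2$.
\item The dual update is gradient ascent with step $\eta$ on $\v-\y$. Hence $\q_t=\eta\sum_{\tau<t}(\v_\tau-\widehat\y_\tau)$.
\end{itemize}

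\textbf{Step 1 (inner loop).} I will run the standard projected-subgradient telescoping over $s=1,\dots,K$, comparing against $\x^*\in\mY$ and using $\norm{\g_{t,s}}\le\overline G_0$ on $\mY$. The target is
\[
 \sum_{s}\Big[f(\y_{t,s+1})-f(\x^*)-\ip{\q_t}{\y_{t,s+1}-\x^*}+\tfrac\eta2\big(\norm{\y_{t,s+1}-\v_t}^2-\norm{\x^*-\v_t}^2\big)\Big]
\]
bounded by $\tfrac{\rho K}{2}\big(\norm{\y_{t,1}-\x^*}^2-\norm{\y_{t,K+1}-\x^*}^2\big)+O\big(\overline G_0^2/\rho\big)$. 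Dividing by $K$ and using Jensen for $\widehat\y_t$ gives a per-outer-step bound. The distance terms then telescope across $t$, because $\y_{t+1,1}=\y_{t,K+1}$, to at most $\rho D^2/2$. The accumulated error is $T\overline G_0^2/(\rho K)$. With $\rho=(\overline G_0/D)\sqrt{T/K}$ both terms are $O(\overline G_0D\sqrt{T/K})$, and $K\ge\overline G_0^2/G^2$ turns this into $O(GD\sqrt T)$.

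\textbf{Step 2 (combining with the WPO and dual steps).} I will add the WPO inequality over $t$ and use $\norm{\x^*-\c}\le D$, so that the $\eta$-regularizer contributes at most $\eta TD^2$. I will combine this with the dual recursion $\q_{t+1}=\q_t+\eta(\v_t-\widehat\y_t)$. Comparing against an arbitrary fixed dual $\q$ gives a bound
\[
 \sum_t\big[\mathcal{L}(\v_t,\widehat\y_t,\q)-F^*\big]\le\frac{\norm{\q}^2}{2\eta}+\eta TD^2+O(GD\sqrt T).
\]
The quadratic terms $\frac\eta2\norm{\widehat\y_t-\v_t}^2$ produced by the dual steps are cancelled by the optimistic term $\eta\v_t$ inside the $\y$-update. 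This cancellation is exactly why that term is there, and I will verify it term by term. Dividing by $T$, using convexity, and taking $\eta=G/(D\sqrt T)$ yields
\[
 f(\overline\y)+\R(\overline\v_T)+\ip{\q}{\overline\v_T-\overline\y}-F^*\le\frac{\norm{\q}^2}{2\eta T}+O\Big(\frac{GD}{\sqrt T}\Big),
 \qquad \overline\y:=\frac1T\sum_t\widehat\y_t\in\mY .
\]

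\textbf{Step 3 (returning to $F(\overline\v_T)$).} I will pick $\q$ aligned with $\overline\v_T-\overline\y$ with $\norm{\q}=G$. This gives
\[
 F(\overline\v_T)-F^*\le f(\overline\v_T)-f(\overline\y)-G\norm{\overline\v_T-\overline\y}+\frac{GD}{2\sqrt T}+O\Big(\frac{GD}{\sqrt T}\Big).
\]
The first difference is at most $G\norm{\overline\v_T-\overline\y}$, provided subgradients along the segment between $\overline\y$ and $\overline\v_T$ are bounded by $G$. Tracking constants should then give $6GD/\sqrt T$.

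\textbf{Main obstacle.} That Lipschitz step is the delicate point. $G$ is only guaranteed on $\mathcal L_0$, while $\overline\v_T$ is not yet known to lie in $\mathcal L_0$; the membership claim is itself part of the conclusion. I would handle this with a continuity or bootstrapping argument on $\theta\mapsto\overline\y+\theta(\overline\v_T-\overline\y)$. The convex function $\theta\mapsto F$ along this segment satisfies the gap bound at the endpoint, and the hypothesis $6GD/\sqrt T<\Delta_0$ keeps the sublevel-set exit point from being reached. If that fails, I would fall back on using $\overline G_0$ on $\mathcal U_0\supseteq\mY$ with the dual norm $\norm{\q}=\overline G_0$. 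Controlling the extra $\overline G_0^2/(\eta T)$ term that this introduces is precisely where $K\ge\overline G_0^2/G^2$ and the choice of $\rho$ would have to be exploited.
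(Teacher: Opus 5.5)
Your Steps 1--2 follow the paper's argument closely. Comparing the dual iterates with a fixed $\q$, $\norm{\q}=G$, is an equivalent substitute for the paper's use of $\q_{T+1}=\eta T(\overline\v_T-\overline\y^+)$ together with the retained term $-\frac{1}{2\eta T}\norm{\q_{T+1}}^2$. Step 3, however, does not close, and this is a genuine gap.

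The bound $f(\overline\v_T)-f(\overline\y)\le G\norm{\overline\v_T-\overline\y}$ needs a subgradient of norm at most $G$ at $\overline\v_T$, i.e.\ $\overline\v_T\in\mathcal L_0$, which is part of the conclusion. Your instinct that $6GD/\sqrt T<\Delta_0$ should block an exit from the sublevel set is right, but the segment from $\overline\y$ to $\overline\v_T$ is the wrong one. $\overline\y\in\mY$ need not lie in $\dom(\R)$ (take $\R$ an indicator), so $F$ may be $+\infty$ at $\overline\y$ and even on the whole segment except its endpoint $\overline\v_T$. Even when $F$ is finite there, nothing places $\overline\y$ in $\mathcal L_0$. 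You also have no bound on $F$ at intermediate points, and the ``gap bound at the endpoint'' is exactly what is in question.

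The missing idea is to anchor the homotopy at $\x^*\in\mX^*\subseteq\mathcal L_0$. Put $\v_\theta:=(1-\theta)\x^*+\theta\overline\v_T$ and $\y_\theta:=(1-\theta)\x^*+\theta\overline\y$. The map $(\v,\y)\mapsto f(\y)+\R(\v)+\ip{\q}{\v-\y}$ is jointly convex and equals $F^*$ at $(\x^*,\x^*)$, so
$f(\y_\theta)+\R(\v_\theta)+\theta\ip{\q}{\overline\v_T-\overline\y}-F^*\le\theta\bigl(f(\overline\y)+\R(\overline\v_T)+\ip{\q}{\overline\v_T-\overline\y}-F^*\bigr)$.
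Whenever $\v_\theta\in\mathcal L_0$, a subgradient at $\v_\theta$ legitimately gives $f(\v_\theta)-f(\y_\theta)\le\theta G\norm{\overline\v_T-\overline\y}$. Hence $F(\v_\theta)-F^*\le\theta\,cGD/\sqrt T<\theta\Delta_0$ for $\theta\in(0,1]$ and some $c<6$. The map $\theta\mapsto F(\v_\theta)$ is finite, convex and closed on $[0,1]$, hence continuous, with value $F^*$ at $\theta=0$. So assuming $\overline\v_T\notin\mathcal L_0$ produces $\theta_0\in(0,1)$ with $F(\v_{\theta_0})-F^*=\Delta_0$, a contradiction; then $\theta=1$ gives the bound. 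This is how the paper finishes.

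Your fallback fails. $\eta=G/(D\sqrt T)$ is fixed by the algorithm, so $\overline G_0^2/(2\eta T)=(\overline G_0^2/G^2)\cdot GD/(2\sqrt T)$, which neither $K$ nor $\rho$ influences; they only control the inner-loop error $\overline G_0D/\sqrt{TK}$. You would also still need $\overline\v_T\in\mathcal U_0$, which is not known.

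There is also a dropped term in Step 2. Your Step 1 target contains $-\frac\eta2\norm{\x^*-\v_t}^2$, which reappears as $+\frac\eta2\norm{\x^*-\v_t}^2$ on the right of each outer-step bound. Only the other half of the $\eta$-prox term, $\frac\eta2\norm{\y_{t,s+1}-\v_t}^2$, is used in the cancellation with the dual step. Since $\v_t$ is an arbitrary point of $\dom(\R)$ and need not be near $\mY$, this term is not a priori bounded. Charging the regularizer only $\eta TD^2$ amounts to discarding both it and the $-\eta\norm{\v_t-\c}^2$ term of the WPO inequality. Keeping the latter repairs this: with $\mathbf{a}:=\x^*-\c$ and $\mathbf{b}:=\v_t-\c$, one has $\norm{\mathbf{a}}^2-\norm{\mathbf{b}}^2+\frac12\norm{\mathbf{a}-\mathbf{b}}^2=2\norm{\mathbf{a}}^2-\frac12\norm{\mathbf{a}+\mathbf{b}}^2\le2D^2$. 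So the correct cost is $2\eta TD^2$, and the constants still fit under $6$.
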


\begin{proof}
Denote $\x^*:=\x_\c^*\in\mY$. Expanding the
WPO guarantee for the call in the algorithm and using $\x^*$ as the
comparison point (similarly to the derivation of Eq. \eqref{eq:qg-smooth-wpo-expanded} in the smooth case) gives
\begin{align}\label{eq:qg-wpo-dual-comparison}
 \ip{\q_t}{\v_t-\x^*}
 +\R(\v_t)-\R(\x^*)
 \le
 \eta\left(
  \norm{\x^*-\c}^2-\norm{\v_t-\c}^2
 \right).
\end{align}

For fixed $t$ and $s$, define
\begin{align*}
 \Phi_{t,s}(\y)
 &:=
 \ip{\g_{t,s}-\q_t}{\y}
 +\frac\eta2\norm{\y-\v_t}^2
 +\frac{\rho K}{2}\norm{\y-\y_{t,s}}^2.
\end{align*}
Completing the square gives
\begin{align*}
 \Phi_{t,s}(\y)
 &=
 \frac{\rho K+\eta}{2}
 \norm{
  \y-
  \frac{\rho K\y_{t,s}+\q_t+\eta\v_t-\g_{t,s}}
       {\rho K+\eta}
 }^2
 +\mathrm{const}.
\end{align*}
Consequently, the projected update in
Algorithm~\ref{alg:qg-sliding-stage} is exactly
\begin{align*}
 \y_{t,s+1}
 =
 \argmin_{\y\in\mY}\Phi_{t,s}(\y).
\end{align*}

The function $\Phi_{t,s}$ is $(\rho K+\eta)$-strongly convex. Since
$\x^*\in\mY$, this implies that
\begin{align*}
 \Phi_{t,s}(\y_{t,s+1})
 +\frac{\rho K+\eta}{2}
  \norm{\x^*-\y_{t,s+1}}^2
 \le
 \Phi_{t,s}(\x^*).
\end{align*}

Expanding and dividing through by $K$ gives
\begin{align*}
 &\frac1K
  \ip{\g_{t,s}-\q_t}{\y_{t,s+1}-\x^*}
 +\frac{\eta}{2K}\norm{\y_{t,s+1}-\v_t}^2
 +\frac\rho2\norm{\y_{t,s+1}-\y_{t,s}}^2\\
 &\qquad\le
 \frac{\eta}{2K}\norm{\x^*-\v_t}^2
 +\frac\rho2\left(
  \norm{\x^*-\y_{t,s}}^2
  -\norm{\x^*-\y_{t,s+1}}^2
 \right)
 -\frac{\eta}{2K}\norm{\x^*-\y_{t,s+1}}^2.
\end{align*}
Using
\begin{align*}
 \ip{\g_{t,s}-\q_t}{\y_{t,s+1}-\x^*}
 &=
 \ip{\g_{t,s}}{\y_{t,s}-\x^*}
 +\ip{\g_{t,s}}{\y_{t,s+1}-\y_{t,s}}
 +\ip{\q_t}{\x^*-\y_{t,s+1}},
\end{align*}
we obtain
\begin{align*}
 &\frac1K\ip{\g_{t,s}}{\y_{t,s}-\x^*}
 +\frac1K\ip{\q_t}{\x^*-\y_{t,s+1}}
 +\frac{\eta}{2K}\norm{\y_{t,s+1}-\v_t}^2\\
 &\quad
 +\frac1K\ip{\g_{t,s}}{\y_{t,s+1}-\y_{t,s}}
 +\frac\rho2\norm{\y_{t,s+1}-\y_{t,s}}^2\\
 &\qquad\le
 \frac{\eta}{2K}\norm{\x^*-\v_t}^2
 +\frac\rho2\left(
  \norm{\x^*-\y_{t,s}}^2
  -\norm{\x^*-\y_{t,s+1}}^2
 \right).
\end{align*}
Young's inequality and
$\norm{\g_{t,s}}\le\overline G_0$, give
\begin{align*}
 &\frac1K\ip{\g_{t,s}}{\y_{t,s+1}-\y_{t,s}}
 +\frac\rho2\norm{\y_{t,s+1}-\y_{t,s}}^2\ge
 -\frac{\norm{\g_{t,s}}^2}{2\rho K^2}
 \ge
 -\frac{\overline G_0^2}{2\rho K^2}.
\end{align*}
Plugging this inequality into the previous one and using the convexity of $f$, gives
\begin{align}
 &\frac{f(\y_{t,s})-f(\x^*)}{K}
 +\frac1K\ip{\q_t}{\x^*-\y_{t,s+1}}
 +\frac\eta{2K}\norm{\y_{t,s+1}-\v_t}^2
 \notag\\
 &\qquad\le
 \frac\eta{2K}\norm{\x^*-\v_t}^2
 +\frac\rho2\left(
  \norm{\x^*-\y_{t,s}}^2
  -\norm{\x^*-\y_{t,s+1}}^2
 \right)
 +\frac{\overline G_0^2}{2\rho K^2}.
 \label{eq:qg-inner-prediction-bound}
\end{align}
Jensen's inequality and the definition of $\widehat{\y}_t$ give
\[
 \frac1K\sum_{s=1}^K
 \norm{\y_{t,s+1}-\v_t}^2
 \ge
 \norm{\widehat{\y}_t-\v_t}^2.
\]
Additionally, the dual update gives
\begin{align*}
 \frac{1}{2\eta}
 \left(\norm{\q_{t+1}}^2-\norm{\q_t}^2\right)
 &=
 \frac{1}{2\eta}
 \left(
  \norm{\q_t+\eta(\v_t-\widehat{\y}_t)}^2
  -\norm{\q_t}^2
 \right)\\
 &=
 \ip{\q_t}{\v_t-\widehat{\y}_t}
 +\frac{\eta}{2}\norm{\v_t-\widehat{\y}_t}^2.
\end{align*}
Using $\ip{\q_t}{\x^*-\widehat{\y}_t}
 =
 \ip{\q_t}{\x^*-\v_t}
 +\ip{\q_t}{\v_t-\widehat{\y}_t}$,
we obtain the identity
\begin{align*}
 &\ip{\q_t}{\x^*-\widehat{\y}_t}
 +\frac{\eta}{2}\norm{\widehat{\y}_t-\v_t}^2
 =
 \ip{\q_t}{\x^*-\v_t}
 +\frac{1}{2\eta}\left(
  \norm{\q_{t+1}}^2-\norm{\q_t}^2
 \right).
\end{align*}
Consequently, summing~\eqref{eq:qg-inner-prediction-bound} over $s$ and using the last two observations, gives
\begin{align}
 &\frac1K\sum_{s=1}^K
  \bigl(f(\y_{t,s})-f(\x^*)\bigr)
 +\frac1{2\eta}\left(
  \norm{\q_{t+1}}^2-\norm{\q_t}^2
 \right)
 \notag\\
 &\le
 \ip{\q_t}{\v_t-\x^*}
 +\frac\eta2\norm{\x^*-\v_t}^2
 +\frac{\overline G_0^2}{2\rho K}
 +\frac\rho2\left(
  \norm{\x^*-\y_{t,1}}^2
  -\norm{\x^*-\y_{t,K+1}}^2
 \right).
 \label{eq:qg-block-prediction-bound}
\end{align}
Adding $\R(\v_t)-\R(\x^*)$ to both sides and applying \eqref{eq:qg-wpo-dual-comparison}, we get
\begin{align*}
 &\frac1K\sum_{s=1}^K
  \bigl(f(\y_{t,s})-f(\x^*)\bigr)
 +\R(\v_t)-\R(\x^*)
 +\frac1{2\eta}\left(
  \norm{\q_{t+1}}^2-\norm{\q_t}^2
 \right)\\
 &\qquad\le
 \eta\left(
  \norm{\x^*-\c}^2-\norm{\v_t-\c}^2
  +\frac12\norm{\x^*-\v_t}^2
 \right)
 +\frac{\overline G_0^2}{2\rho K}\\
 &\qquad\quad
 +\frac\rho2\left(
  \norm{\x^*-\y_{t,1}}^2
  -\norm{\x^*-\y_{t,K+1}}^2
 \right).
\end{align*}
Note,
\begin{align*}
 &\norm{\x^*-\c}^2-\norm{\v_t-\c}^2
 +\frac12\norm{\x^*-\v_t}^2\\
 &~~=
 2\norm{\x^*-\c}^2
 -\frac12\norm{(\x^*-\c)+(\v_t-\c)}^2\le 2D^2.
\end{align*}
Thus, the entire outer iteration yields
\begin{align}
 &\frac1K\sum_{s=1}^K
  \bigl(f(\y_{t,s})-f(\x^*)\bigr)
 +\R(\v_t)-\R(\x^*)
 +\frac1{2\eta}\left(
  \norm{\q_{t+1}}^2-\norm{\q_t}^2
 \right)
 \notag\\
 &\qquad\le
 \frac{\overline G_0^2}{2\rho K}
 +2\eta D^2
 +\frac\rho2\left(
  \norm{\x^*-\y_{t,1}}^2
  -\norm{\x^*-\y_{t,K+1}}^2
 \right).
 \label{eq:qg-block-wpo-bound}
\end{align}

Summing~\eqref{eq:qg-block-wpo-bound} over $t$, dividing by $T$, using
$\q_1=0$, $\y_{1,1}=\c$,
$\y_{t+1,1}=\y_{t,K+1}$, and slightly rearranging, we get
\begin{align}
 &\frac1{TK}
  \sum_{t=1}^T\sum_{s=1}^K f(\y_{t,s})
 +\frac1T\sum_{t=1}^T\R(\v_t)-F^*
 \notag\\
 &\qquad\le
 \frac{\overline G_0^2}{2\rho K}
 +2\eta D^2 +\frac{\rho D^2}{2T} -\frac1{2\eta T}\norm{\q_{T+1}}^2.
 \label{eq:qg-stage-potential-bound}
\end{align}

It remains to transfer this estimate from the auxiliary average to the WPO
average. Define
\[
 \overline\y^-
 :=
 \frac1{TK}\sum_{t=1}^T\sum_{s=1}^K\y_{t,s},
 \qquad
 \overline\y^+
 :=
 \frac1{TK}\sum_{t=1}^T\sum_{s=1}^K\y_{t,s+1}.
\]
Averaging the differences of dual updates and using the continuity of the auxiliary sequence
between successive outer iterations gives
\[
 \q_{T+1}
 =
 \eta T(\overline\v_T-\overline\y^+),
 \qquad
 \overline\y^+-\overline\y^-
 =
 \frac{\y_{T,K+1}-\y_{1,1}}{TK}.
\]
Since $\y_{T,K+1}, \y_{1,1}$ are in $\mY$, this yields
\begin{align}\label{eq:qg-average-coupling}
 \norm{\overline\v_T-\overline\y^-}
 \le
 \frac{\norm{\q_{T+1}}}{\eta T}
 +\frac{2D}{TK}.
\end{align}

For $0\le\theta\le1$, set
\[
 \v_\theta
 :=
 (1-\theta)\x^*+\theta\overline\v_T,
 \qquad
 \y_\theta
 :=
 (1-\theta)\x^*+\theta\overline\y^-.
\]
Whenever $\v_\theta\in\mathcal L_0$, choose
$\g_\theta\in\partial f(\v_\theta)$. The bound
$\norm{\g_\theta}\le G$, convexity, and
\eqref{eq:qg-average-coupling} imply
\begin{align*}
 f(\v_\theta)-f(\y_\theta)
 &\le
 \ip{\g_\theta}{\v_\theta-\y_\theta}\le
 \theta\left(
  \frac{G}{\eta T}\norm{\q_{T+1}}
  +\frac{2GD}{TK}
 \right).
\end{align*}
Also, convexity gives
\begin{align*}
 f(\y_\theta)
 &\le
 (1-\theta)f(\x^*)
 +\frac{\theta}{TK}
  \sum_{t=1}^T\sum_{s=1}^K f(\y_{t,s}),\\
 \R(\v_\theta)
 &\le
 (1-\theta)\R(\x^*)
 +\frac{\theta}{T}
  \sum_{t=1}^T\R(\v_t).
\end{align*}
Combining the preceding bounds, whenever
$\v_\theta\in\mathcal L_0$, gives
\begin{align*}
 F(\v_\theta)-F^*
 &=
 f(\v_\theta)-f(\y_\theta)
 +f(\y_\theta)+\R(\v_\theta)-F^*\\
 &\le
 \theta\Bigg[
  \frac1{TK}\sum_{t=1}^T\sum_{s=1}^K f(\y_{t,s})
  +\frac1T\sum_{t=1}^T\R(\v_t)-F^*
  +\frac{G}{\eta T}\norm{\q_{T+1}}
  +\frac{2GD}{TK}
 \Bigg].
\end{align*}
Applying \eqref{eq:qg-stage-potential-bound} and using
\[
 \frac{G}{\eta T}\norm{\q_{T+1}}
 -\frac1{2\eta T}\norm{\q_{T+1}}^2
 =
 \frac{G^2}{2\eta T}
 -\frac1{2\eta T}
  \left(\norm{\q_{T+1}}-G\right)^2
 \le
 \frac{G^2}{2\eta T},
\]
we have that whenever $\v_\theta\in\mathcal L_0$,
\begin{align}
 F(\v_\theta)-F^*
 \le
 \theta\left(
  \frac{\overline G_0^2}{2\rho K}
  +\frac{\rho D^2}{2T}
  +2\eta D^2
  +\frac{G^2}{2\eta T}
  +\frac{2GD}{TK}
 \right).
 \label{eq:qg-segment-bound}
\end{align}

For the parameters in Algorithm~\ref{alg:qg-sliding-stage},
\[
 \frac{\overline G_0^2}{2\rho K}
 +\frac{\rho D^2}{2T}
 =
 \frac{\overline G_0D}{\sqrt{TK}}
 \le
 \frac{GD}{\sqrt T},
 \qquad
 2\eta D^2
 +\frac{G^2}{2\eta T}
 =
 \frac{5GD}{2\sqrt T}.
\]
Moreover, $\frac{2GD}{TK}
 \le
 \frac{2GD}{\sqrt T}$.
Thus the RHS of
\eqref{eq:qg-segment-bound} is at most
\begin{align}\label{eq:nonsmoothUB:1}
 \theta\frac{11GD}{2\sqrt T}
 <
 \theta\frac{6GD}{\sqrt T}
 <
 \theta\Delta_0.
\end{align}

Now suppose by way of contradiction that
$\overline\v_T\notin\mathcal L_0$. Since
$\x^*\in\mX^*\subseteq\mathcal L_0$ and $F$ is closed and convex, there is
a $\theta_0\in(0,1)$ such that
$F(\v_{\theta_0})-F^*=\Delta_0$.
However, applying \eqref{eq:qg-segment-bound} at $\theta_0$, together with \eqref{eq:nonsmoothUB:1}, then leads to a contradiction. Hence $\overline\v_T\in\mathcal L_0$. We may therefore
take $\theta=1$ in \eqref{eq:qg-segment-bound} which,  together with \eqref{eq:nonsmoothUB:1}, proves
\eqref{eq:qg-one-stage-simplified-bound}.
\end{proof}

Algorithm \ref{alg:qg-nonsmooth-wpo} given below  iteratively invokes the one-stage Algorithm \ref{alg:qg-sliding-stage} with a standard shrinking mechanism.

\begin{algorithm}[t]
\caption{Nonsmooth weak-proximal method under quadratic growth}
\label{alg:qg-nonsmooth-wpo}
\begin{algorithmic}[1]
\Require $\x_{\rm in}\in\dom(\R)$, $\Delta_0$, $\alpha$, target accuracy $\epsilon>0$, local subgradient bounds $G, \overline G_0$, weak proximal oracle $\mA$
\State $\x\gets\x_{\rm in}$, $\Delta\gets\Delta_0$
\State $K\gets\left\lceil \overline G_0^2/G^2\right\rceil$
\While{$\Delta>\epsilon$}
  \State $D\gets\sqrt{2\Delta/\alpha}$
  \State $T\gets\left\lceil288G^2/(\alpha\Delta)\right\rceil$
\State $\x\gets$  output of Algorithm~\ref{alg:qg-sliding-stage} with center $\x$, radius $D$, constants $G,\overline G_0$, and integers $T,K$
  \State $\Delta\gets6G\sqrt{2\Delta/(\alpha T)}$
\EndWhile
\State \Return $\x$
\end{algorithmic}
\end{algorithm}

\begin{theorem}%
\label{thm:qg-nonsmooth-wpo-upper}
Algorithm~\ref{alg:qg-nonsmooth-wpo} returns
$\widehat\x\in\dom(\R)$ with $F(\widehat\x)-F^*\le\epsilon$ using
\begin{align}\label{eq:qg-nonsmooth-wpo-complexity}
 O\left(
  1+\frac{G^2}{\alpha\epsilon}
  +\log\left(
    1+\log\left(1+\frac{\alpha\Delta_0}{G^2}\right)
  \right)
 \right)
\end{align}
weak proximal oracle calls and
\begin{align}\label{eq:qg-nonsmooth-fo-complexity}
 O\left(
  \frac{\overline G_0^2}{\alpha\epsilon}
  +\frac{\overline G_0^2}{G^2}\left({1+
   \log\left(
    1+\log\left(1+\frac{\alpha\Delta_0}{G^2}\right)
   \right)}\right)
 \right)
\end{align}
subgradient calls to $f$.  
\end{theorem}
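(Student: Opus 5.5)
The plan is to prove by induction over the while-loop iterations (stages) that the invariant $\x\in\mathcal L_0$ and $F(\x)-F^*\le\Delta$ holds at the start of every stage. We then invoke Lemma~\ref{lem:qg-nonsmooth-sliding-stage} once per stage and sum the resulting oracle counts.

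\emph{Invariant and per-stage application.} Initially $\x=\x_{\rm in}\in\mathcal L_0$ and $F(\x)-F^*\le\Delta_0$. Suppose the invariant holds with current value $\Delta\le\Delta_0$; note that $\Delta$ only decreases, as shown below. Quadratic growth \eqref{eq:qg-condition} gives a closest $\x_\c^*\in\mX^*$ with $\norm{\x-\x_\c^*}\le\sqrt{2\Delta/\alpha}=D$. Every point of $\mY=\{\y:\norm{\y-\x}\le D\}$ lies within $D\le D_0$ of $\x\in\mathcal L_0$, so $\mY\subseteq\mathcal U_0$ and subgradients there are bounded by $\overline G_0$. The choice $K=\lceil\overline G_0^2/G^2\rceil$ satisfies $K\ge\overline G_0^2/G^2$. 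With $T=\lceil 288G^2/(\alpha\Delta)\rceil$ we get
\[
6GD/\sqrt T=6G\sqrt{2\Delta/(\alpha T)}\le 6G\sqrt{2\Delta^2/288G^2}=\Delta/2<\Delta_0.
\]
Lemma~\ref{lem:qg-nonsmooth-sliding-stage} therefore applies. It gives a new $\x\in\mathcal L_0$ with $F(\x)-F^*\le 6GD/\sqrt T$, which is exactly the updated $\Delta$. Hence the invariant persists, with $\Delta_{\rm new}\le\Delta/2$. On exit, $\Delta\le\epsilon$ yields the accuracy claim.

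\emph{Counting.} A finer recurrence is needed than mere halving. Since $T\ge 288G^2/(\alpha\Delta)$, we have
\[
\Delta_{\rm new}=6G\sqrt{2\Delta/(\alpha T)}\le\min\Bigl\{\Delta/2,\ \sqrt{72G^2/\alpha}\,\sqrt{\Delta/T}\Bigr\}.
\]
While $\Delta\gg G^2/\alpha$, we have $T=1$, so $\Delta_{\rm new}=6G\sqrt{2\Delta/\alpha}$. This is a square-root contraction $\Delta/(G^2/\alpha)\mapsto O(\sqrt{\Delta/(G^2/\alpha)})$. It brings $\Delta$ down to $O(G^2/\alpha)$ within $O(\log(1+\log(1+\alpha\Delta_0/G^2)))$ stages, each using $O(1)$ WPO calls and $K$ subgradient calls. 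This produces the doubly logarithmic terms in \eqref{eq:qg-nonsmooth-wpo-complexity}–\eqref{eq:qg-nonsmooth-fo-complexity}.

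Once $\Delta=O(G^2/\alpha)$, we have $T=\Theta(G^2/(\alpha\Delta))$ and $\Delta$ at least halves each stage. The stage costs $\sum T\le\sum(1+288G^2/(\alpha\Delta_i))$ then form a geometric series dominated by the final stage. Every stage that is executed has $\Delta_i>\epsilon$, and the values grow at least geometrically backwards, so the sum is $O(G^2/(\alpha\epsilon))$ plus the number of stages. The number of these stages is $O(1+\log(G^2/(\alpha\epsilon)))$, which is absorbed into the $G^2/(\alpha\epsilon)$ term; when $\epsilon\gtrsim G^2/\alpha$ it is absorbed into the additive $1$. Multiplying by $K=O(1+\overline G_0^2/G^2)$ gives the subgradient bound, using $\overline G_0\ge G$ to write $K\,G^2/(\alpha\epsilon)=O(\overline G_0^2/(\alpha\epsilon))$.

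\emph{Main obstacle.} The main obstacle is the bookkeeping in the counting step. Specifically, we must cleanly separate the $T=1$ regime, where the square-root recurrence gives doubly-logarithmic convergence, from the geometric regime, and confirm that the stage count in the latter is absorbed. I would handle this by defining $u_i:=\alpha\Delta_i/G^2$ and bounding $u_{i+1}\le\min\{u_i/2,\,c\sqrt{u_i}\}$ directly.
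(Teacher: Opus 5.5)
Your proposal is correct and follows essentially the same route as the paper. It uses the same induction invariant ($\x\in\mathcal L_0$, $F(\x)-F^*\le\Delta$) to verify the hypotheses of Lemma~\ref{lem:qg-nonsmooth-sliding-stage} and obtain $\Delta^+=6GD/\sqrt T\le\Delta/2$. It makes the same split of the counting at $\Delta\approx 288G^2/\alpha$: a $T=1$ phase with recurrence $u^+=6\sqrt{2u}$, which gives the doubly logarithmic term, and a geometric phase where $\sum_j 1/\Delta_j'<2/\epsilon$ bounds the WPO calls. The subgradient count is then obtained by multiplying by $K$. Your explicit note that $\Delta$ never exceeds $\Delta_0$ (hence $D\le D_0$ and $\mY\subseteq\mathcal U_0$) makes precise a step the paper leaves implicit.
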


\begin{proof}
We first establish by induction that every invocation of
Algorithm~\ref{alg:qg-sliding-stage} satisfies all the assumptions of
Lemma~\ref{lem:qg-nonsmooth-sliding-stage}. In fact, it suffices to show that at the beginning of each iteration of the while loop we have that $\x\in\mathcal{L}_0$ and that there exists $\x^*\in\mX^*$ such that $\Vert{\x-\x^*}\Vert \leq D$: the definitions of $K,T,D, \Delta$ in the algorithm imply the remaining conditions: $\mY\subseteq\mathcal{U}_0$ (which implies the upper-bound on the subgradients), $K \geq \overline{G}_0^2/G^2$, and $6GD/\sqrt{T} < \Delta_0$.  As for the condition $\Vert{\x-\x^*}\Vert \leq D$, note that if we have $F(\x) - F^* \leq \Delta$, then quadratic growth implies that indeed there is $\x^*\in\mX^*$ such that $\Vert{\x-\x^*}\Vert \leq \sqrt{2\Delta/\alpha}=D$. Thus, it suffices to prove that at the beginning of each iteration $\x\in\mathcal{L}_0$ and $F(\x) - F^* \leq \Delta$. This holds by assumption for the first iteration. Now suppose these conditions hold at the beginning of some iteration and so the assumptions of Lemma~\ref{lem:qg-nonsmooth-sliding-stage} hold, and let $\x^+, \Delta^+$ denote the updates to $\x,\Delta$ after the call to Algorithm~\ref{alg:qg-sliding-stage}. By Lemma~\ref{lem:qg-nonsmooth-sliding-stage} we have that $\x^+\in\mathcal{L}_0$, and  
\begin{align}\label{eq:qg-unified-recurrence}
F(\x^+) - F^* \leq \frac{6GD}{\sqrt{T}} = \Delta^+ \leq \frac{\Delta}{2},
\end{align}
where the equality and the last inequality follow from the definitions of $D,T,\Delta$.
 
Thus, the assumptions of  Lemma~\ref{lem:qg-nonsmooth-sliding-stage} continue to hold for the next iteration, and the induction holds.
This also proves that Algorithm~\ref{alg:qg-nonsmooth-wpo} terminates with $F(\widehat\x)-F^*\le\epsilon$.

It remains to count oracle calls.  It is convenient to distinguish between stages according to
whether $\Delta>288G^2/\alpha$.  Consider first a stage for which $\Delta>288G^2/\alpha$, then $T=1$.
Let $\Delta^+$ denote the value assigned to the maintained bound at the end
of this stage. By the update in the algorithm, $\Delta^+
 = 6G\sqrt{2\Delta/\alpha}$.
Define the corresponding normalized bounds $q:=\alpha\Delta/G^2$, $q^+:=\alpha\Delta^+/G^2$.
The recurrence  is
therefore
\[
 q^+
 =
 \frac{\alpha}{G^2}
 6G\sqrt{\frac{2\Delta}{\alpha}}
 =
 6\sqrt{\frac{2\alpha\Delta}{G^2}}
 =
 6\sqrt{2q}.
\]
Consequently, the number $R$ of stages for which
$\Delta>288G^2/\alpha$ satisfies
\begin{align}\label{eq:qg-coarse-count}
 R
 =O\left(
  1+\log\left(
    1+\log\left(1+\frac{\alpha\Delta_0}{G^2}\right)
  \right)
 \right).
\end{align}
Each such stage uses one weak proximal call and $K$ subgradient calls.

Consider now the remaining stages, for which
$\Delta\le288G^2/\alpha$.  Let their successive maintained bounds be
$\Delta_0',\Delta_1',\ldots,\Delta_{J-1}'$, where each one is larger than
$\epsilon$.  By~\eqref{eq:qg-unified-recurrence},
$\Delta_{j+1}'\le\Delta_j'/2$, and hence
\[
 \frac1{\Delta_j'}
 \le
 2^{-(J-1-j)}\frac1{\Delta_{J-1}'}.
\]
Since $\Delta_{J-1}'>\epsilon$,
\begin{align}\label{eq:qg-unified-reciprocal-sum}
 \sum_{j=0}^{J-1}\frac1{\Delta_j'}
 \le\frac{2}{\Delta_{J-1}'}
 <\frac2\epsilon.
\end{align}
For these stages, $T_j 
 \le
 1+(288G^2)/(\alpha\Delta_j')$.
Also, because the maintained bound decreases by at least a factor two (see \eqref{eq:qg-unified-recurrence}) and
starts this regime at most $288G^2/\alpha$,
\[
 J
 =
 O\left(
  1+\log\left(1+\frac{G^2}{\alpha\epsilon}\right)
 \right)
 =
 O\left(1+\frac{G^2}{\alpha\epsilon}\right).
\]
Together with~\eqref{eq:qg-unified-reciprocal-sum}, this gives
\[
 \sum_{j=0}^{J-1}T_j \leq \sum_{j=0}^{J-1}\left({1 + \frac{288G^2}{\alpha\Delta_j'}}\right)
 =
 O\left(1+\frac{G^2}{\alpha\epsilon}\right).
\]
Combining this bound with~\eqref{eq:qg-coarse-count}, the total number
 of weak proximal oracle calls indeed satisfies \eqref{eq:qg-nonsmooth-wpo-complexity}.

Since every outer iteration of Algorithm~\ref{alg:qg-sliding-stage} uses one
weak proximal oracle call and exactly $K$ subgradient calls, the
total number of subgradient calls is $K$ times that of the WPO, which proves \eqref{eq:qg-nonsmooth-fo-complexity}.
\end{proof}

\begin{remark}[The constrained case]
Suppose $\R=I_{\mK}$, where $\mK$ is convex and compact, and suppose
$\mK$ is enclosed by a Euclidean ball $\mB$ such that $\norm{\g}\le G$ for any
$\g\in\partial f(\u), \u\in\mB$. This matches the assumptions of the projection-based MOPES algorithm \cite{MOPES}.
Then, the additional localized bound $\overline G_0$ is not needed.
Indeed, every weak proximal output belongs to $\mK$, and hence so does
the average returned by each invocation of
Algorithm~\ref{alg:qg-sliding-stage}.  Thus every center $\c$ used by
Algorithm~\ref{alg:qg-nonsmooth-wpo} belongs to $\mK\subseteq\mB$.
For the auxiliary updates, replace the set
$\{\y|\norm{\y-\c}\le D\}$ in
Algorithm~\ref{alg:qg-sliding-stage} with
$\mY:=\{\y|\norm{\y-\c}\le D\}\cap\mB$.
The proof of Lemma~\ref{lem:qg-nonsmooth-sliding-stage} is unchanged.  Moreover,
all auxiliary first-order queries belong to $\mB$, while the points
$\v_\theta$ used in its proof belong to $\mK\subseteq\mB$.  
Therefore Algorithm~\ref{alg:qg-sliding-stage} may be run with
$\overline G_0=G$ and $K=1$.
Consequently, both the weak-proximal and subgradient complexities are
\[
 O\left(
  1+\frac{G^2}{\alpha\epsilon}
  +\log\left(
    1+\log\left(1+\frac{\alpha\Delta_0}{G^2}\right)
  \right)
 \right).
\]
\end{remark}

\section{Acknowledgement}
This work was funded by the European Union (ERC,  ProFreeOpt, 101170791). Views and opinions expressed are however those of the author(s) only and do not necessarily reflect those of the European Union or the European Research Council Executive Agency. Neither the European Union nor the granting authority can be held responsible for them.

\appendix

\section{Lower-Bound for Frank-Wolfe-type Methods}
The argument below is closely related to Lan's lower bound for nonsmooth
linear-optimization-based convex programming~\cite{lan2013complexity}, with the modification that it considers a strongly convex objective function.
\begin{theorem}%
\label{thm:lmo-nonsmooth-simplex-lb}
Fix $0<\alpha<G$ and $n\ge2$, and let $\simplex_n:=\left\{\x\in\reals_+^n:\ \sum_{i=1}^n x_i=1\right\}$.
Consider the following linear optimization model over $\simplex_n$: at every
linear-oracle call, on input $\c\in\reals^n$, the oracle returns some $\v\in\argmin_{\x\in\{\e_1,\dots,\e_n\}}\ip{\c}{\x}$,
with any fixed deterministic tie-breaking rule, and after $T$ such calls the
method must return a point
$\widehat\x\in\conv\{\v_1,\ldots,\v_T\}$. %
There exists a nonsmooth $\alpha$-strongly convex objective $f:\reals^n\to\reals$ with unique minimizer $\x^*\in\simplex_n$, such that every
subgradient of $f$ has norm at most $G$ over $\simplex_n$, and
for every $1\le T\le n$, every output of such a method satisfies
\begin{align}\label{eq:lmo-simplex-gap}
 f(\widehat\x)-f(\x^*)
 \ge (G-\alpha)\sqrt{\frac1T-\frac1n}.
\end{align}
Consequently, any such method guaranteeing error at most $\epsilon$ must use
at least $ T\ge \frac12\min\left\{n, (G-\alpha)^2/\epsilon^2\right\}$
linear optimization oracle calls.  %
\end{theorem}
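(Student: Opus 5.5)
The plan is to use a single fixed, non-adaptive hard objective centred at the barycenter of the simplex. Because the output is restricted to the convex hull of at most $T$ vertices, it is necessarily sparse, and a sparse point of the simplex is far from the barycenter whatever the method or tie-breaking rule. Concretely, with $\bar\x:=\frac1n\mathbf{1}$, I would take
\begin{align*}
 f(\x):=(G-\alpha)\norm{\x-\bar\x}+\frac{\alpha}{2}\norm{\x-\bar\x}^2 .
\end{align*}

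First, I would verify the required properties of $f$.
\begin{itemize}
\item \emph{Strong convexity.} $f$ is a nonnegative multiple of a norm plus an $\alpha$-strongly convex quadratic, so it is $\alpha$-strongly convex.
\item \emph{Unique minimizer.} Its unique minimizer over $\reals^n$ is $\bar\x\in\simplex_n$, so $\x^*=\bar\x$ and $f(\x^*)=0$.
\item \emph{Subgradient bound.} Every subgradient has the form $(G-\alpha)\u+\alpha(\x-\bar\x)$ with $\norm{\u}\le1$. For $\x\in\simplex_n$ we have $\norm{\x-\bar\x}^2=\norm{\x}^2-\frac1n\le1-\frac1n<1$, so every such subgradient has norm at most $(G-\alpha)+\alpha=G$.
\end{itemize}

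Second, I would bound the distance of any admissible output from $\bar\x$. After $T$ calls the returned vertices $\v_1,\ldots,\v_T$ are standard basis vectors, at most $T$ of them distinct, whatever the adaptivity or tie-breaking. Hence $\widehat\x\in\conv\{\v_1,\ldots,\v_T\}$ lies in $\simplex_n$ and is supported on at most $T$ coordinates. By Cauchy--Schwarz on its support, $\norm{\widehat\x}^2\ge\frac1T$. Using $\ip{\widehat\x}{\bar\x}=\frac1n$, this gives
\begin{align*}
\norm{\widehat\x-\bar\x}^2=\norm{\widehat\x}^2-\frac1n\ge\frac1T-\frac1n .
\end{align*}
Dropping the nonnegative quadratic term in $f$ then yields $f(\widehat\x)-f(\x^*)\ge(G-\alpha)\sqrt{1/T-1/n}$, which is exactly \eqref{eq:lmo-simplex-gap}.

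Third, for the oracle-complexity consequence, I would argue by cases. Suppose $T<\frac12\min\{n,(G-\alpha)^2/\epsilon^2\}$. Then $T<n/2$ gives $\frac1T-\frac1n\ge\frac1{2T}$, so the error is at least $(G-\alpha)/\sqrt{2T}$. Since $T<(G-\alpha)^2/(2\epsilon^2)$, this exceeds $\epsilon$, a contradiction.

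None of these steps is a real obstacle. The only points needing care are that the subgradient bound must hold on all of $\simplex_n$, which is why I use $\norm{\x-\bar\x}<1$ there, and the elementary identity $\norm{\x-\bar\x}^2=\norm{\x}^2-1/n$ for points of the simplex. The construction does not depend on the method, so no resisting-oracle argument is needed.
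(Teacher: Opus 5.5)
Your proposal is correct and follows the paper's proof essentially step for step. It uses the same objective $(G-\alpha)\norm{\x-\bar\x}+\frac{\alpha}{2}\norm{\x-\bar\x}^2$ centred at the barycenter, the same subgradient bound via $\norm{\x-\bar\x}<1$ on $\simplex_n$, and the same support-size/Cauchy--Schwarz argument giving $\norm{\widehat\x-\bar\x}^2\ge\frac1T-\frac1n$. Your case argument for the oracle-complexity consequence, using $T<n/2\Rightarrow\frac1T-\frac1n>\frac1{2T}$, is a correct filling-in of a step the paper leaves implicit.
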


\begin{proof}
Let
\[
 \u:=\frac1n\mathbf{1}
 \qquad\text{and}\qquad
 f(\x):=(G-\alpha)\norm{\x-\u}
       +\frac\alpha2\norm{\x-\u}^2.
\]
Note $f$ is indeed $\alpha$-strongly convex and  its unique minimizer is
$\x^*=\u\in\simplex_n$ with $f(\x^*)=0$.

We first verify the bounded-subgradient condition on $\simplex_n$.  For every
$\x\in\simplex_n$,
\[
 \norm{\x-\u}^2=\norm\x^2-\frac1n\le1-\frac1n<1.
\]
If $\x\ne\u$, then
\[
 \partial f(\x)
 =\left\{(G-\alpha)\frac{\x-\u}{\norm{\x-\u}}
                +\alpha(\x-\u)\right\},
\]
and hence every $\g\in\partial f(\x)$ satisfies
\[
 \norm\g
 =(G-\alpha)+\alpha\norm{\x-\u}
 \le G.
\]
At $\x=\u$ we have
$\partial f(\u)=(G-\alpha)\{\g:\norm\g\le1\}$, so the same bound holds.

Since every extreme point of $\simplex_n$ is a standard basis vector, after
$T$ linear-oracle calls every point in
$\conv\{\v_1,\ldots,\v_T\}$ has at most $T$ nonzero coordinates.  Let
$q:=|\supp(\widehat\x)|\le T$.  Since $\widehat\x\in\simplex_n$, the
Cauchy--Schwarz inequality gives
\[
 1=\left(\sum_{i\in\supp(\widehat\x)}\widehat x_i\right)^2
 \le q\sum_{i=1}^n\widehat x_i^2,
\]
and therefore $\norm{\widehat\x}^2\ge1/q\ge1/T$.  Using
$\ip{\widehat\x}{\u}=1/n=\norm\u^2$, we obtain
\[
 \norm{\widehat\x-\u}^2
 =\norm{\widehat\x}^2-\frac1n
 \ge\frac1T-\frac1n.
\]
Consequently,
\[
 f(\widehat\x)-f(\x^*)
 \ge (G-\alpha)\sqrt{\frac1T-\frac1n},
\]
which proves the theorem.

\end{proof}

\section{Proofs of Restricted Proximal Oracle Implementations}

\subsection{Proof of Lemma \ref{lem:symmetric-sparse-projection}}
We first restate the lemma and then prove it.

\begin{lemma}%
Let $\R$ satisfy the symmetry conditions above, and let $s\le r\le n$.
Fix $\y\in\reals^n$ and $\lambda>0$.

If $\dom(\R)\subseteq\reals_+^n$, let $T$ contain the indices of the $r$
largest entries of $\y$.  If $\R$ is invariant under coordinate-wise sign
changes, let $T$ contain the indices of the $r$ largest entries of $\y$ in
absolute value.  Let
\[
 \widehat{\u}
 \in
 \argmin_{\u\in\reals^r}
 \left\{
 \lambda\norm{\u-\y_T}^2+\R(\I_T\u)
 \right\},
 \qquad
 \widehat{\x}:=\I_T\widehat{\u}.
\]
Then
\[
 \widehat{\x}
 \in
 \argmin_{\x\in\mC_r}
 \left\{
 \lambda\norm{\x-\y}^2+\R(\x)
 \right\}.
\]
Consequently, if every $\x^*\in\mX^*$ is $s$-sparse, then for every
$r\ge s$ this construction implements a weak proximal oracle.
\end{lemma}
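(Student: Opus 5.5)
The plan is an exchange argument. Fix any $\x\in\mC_r$ with $\R(\x)<\infty$, and let $S:=\supp(\x)$, so that $|S|\le r$. We will exhibit a point $\x'\in\dom(\R)$ with $\supp(\x')\subseteq T$, $\R(\x')=\R(\x)$ and $\norm{\x'-\y}\le\norm{\x-\y}$. Once this is done, we have
\[
\lambda\norm{\x-\y}^2+\R(\x)\ge\lambda\norm{\x'_T-\y_T}^2+\lambda\norm{\y_{T^c}}^2+\R(\I_T\x'_T).
\]
By optimality of $\widehat\u$, the right-hand side is at least $\lambda\norm{\widehat\u-\y_T}^2+\lambda\norm{\y_{T^c}}^2+\R(\widehat\x)=\lambda\norm{\widehat\x-\y}^2+\R(\widehat\x)$. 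It also follows that $\R(\widehat\x)\le\lambda\norm{\x-\y}^2+\R(\x)-\lambda\norm{\widehat\x-\y}^2<\infty$. Hence $\widehat\x\in\dom(\R)$, and it is $r$-sparse, so $\widehat\x\in\mC_r$. (Existence of $\widehat\u$ is assumed in the statement.) The final consequence is then immediate from Definition~\ref{def:rpo}, since every $\x^*\in\mX^*$ is $s$-sparse with $s\le r$ and therefore lies in $\mC_r$.

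\textbf{Step 1: construct $\x'$.} The point $\x'$ is obtained from $\x$ by a coordinate permutation, composed with sign changes in the second case. In either case $\R$ is invariant under these operations, so $\R(\x')=\R(\x)$.

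Since $|S|\le r=|T|$, choose a bijection $\pi$ that maps $S\setminus T$ into $T\setminus S$ and fixes $S\cap T$, and extend it to a permutation of $[n]$. Move each entry $x_i$ with $i\in S\setminus T$ to position $\pi(i)\in T\setminus S$. In the sign-invariant case, also flip its sign so that it agrees with $\operatorname{sign}(y_{\pi(i)})$.

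Coordinates in $S\cap T$ are untouched. In the sign-invariant case we additionally flip each $x_i$, $i\in S\cap T$, so that $x_i y_i\ge0$. This does not increase $(x_i-y_i)^2$, since that quantity equals $x_i^2+y_i^2-2x_iy_i$.

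\textbf{Step 2: compare distances.} Write $\norm{\x-\y}^2=\norm{\x}^2+\norm{\y}^2-2\ip{\x}{\y}$. Norms are preserved by the construction, so it suffices to show that $\ip{\x'}{\y}\ge\ip{\x}{\y}$. This reduces to a per-coordinate inequality for each moved index $i\in S\setminus T$, with $j=\pi(i)\in T$.

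In the nonnegative case, $x_i\ge0$ because $\x\in\dom(\R)\subseteq\reals_+^n$. Moreover $y_j\ge y_i$ by the choice of $T$ as the $r$ largest entries (ties do not matter). Hence $x_iy_j\ge x_iy_i$.

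In the sign case, after the flip we have $\operatorname{sign}(x_i)=\operatorname{sign}(y_j)$, so the new contribution is $|x_i||y_j|$. Since $|y_j|\ge|y_i|$, this gives $|x_i||y_j|\ge|x_i||y_i|\ge x_iy_i$.

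Summing over the moved and flipped coordinates yields the claim.

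\textbf{Expected main obstacle.} The only delicate point is bookkeeping. We must make sure the target positions $T\setminus S$ are initially zero in $\x$, which is true since they lie outside $S$, so that moving entries there does not collide with existing ones. We must also make sure the permutation keeps $\x'\in\dom(\R)$, which follows from the invariance of $\R$.

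Beyond this, the argument is elementary. It uses the rearrangement-type inequality above together with the separability of the squared norm across $T$ and $T^c$.
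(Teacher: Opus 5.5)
Your proof is correct and follows essentially the same route as the paper. Both arguments use permutation (and sign) invariance of $\R$ to move the support of any $\x\in\mC_r$ into $T$ without increasing $\lambda\norm{\x-\y}^2+\R(\x)$, and then use separability of the squared norm across $T$ and $T^c$. The differences are minor: you only relocate the off-$T$ entries into vacant slots of $T$, which needs just $y_j\ge y_i$ (resp. $|y_j|\ge|y_i|$) for $j\in T$, $i\notin T$, instead of the full sorted rearrangement; and you make explicit both the sign bookkeeping at the target positions and the fact that $\widehat\x\in\dom(\R)$, hence $\widehat\x\in\mC_r$, which the paper leaves implicit.
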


\begin{proof}
Fix any $\z\in\mC_r$. Consider first the case $\dom(\R)\subseteq\reals_+^n$.  By permuting the
coordinates of $\z$, we may place its nonzero entries on the coordinates
in $T$, pairing its largest entries with the largest entries of $\y$.
Denote the resulting point by $\widetilde{\z}$.  By permutation invariance,
$\R(\widetilde{\z})=\R(\z)$, $\norm{\widetilde{\z}}=\norm{\z}$.
Moreover, the rearrangement inequality gives
$\ip{\widetilde{\z}}{\y}
 \ge
 \ip{\z}{\y}$.
Hence,
\[
 \lambda\norm{\widetilde{\z}-\y}^2+\R(\widetilde{\z})
 \le
 \lambda\norm{\z-\y}^2+\R(\z).
\]

If $\R$ is invariant under coordinate-wise sign changes, first change the
signs of the nonzero entries of $\z$ to agree with the corresponding signs
of $\y$, and then permute their absolute values so that the largest ones are
paired with the $r$ largest entries of $\y$ in absolute value.  Signed
permutation invariance again preserves both $\R(\z)$ and $\norm{\z}$, while
the inner product with $\y$ cannot decrease.  The same inequality therefore
holds.

Thus, in either case, for every $\z\in\mC_r$ there is a point with support
contained in $T$ whose proximal objective is no larger.  It remains only to
minimize over points supported on $T$.  For $\x=\I_T\u$,
\[
 \norm{\x-\y}^2
 =
 \norm{\u-\y_T}^2+\norm{\y_{T^c}}^2,
\]
where the second term is independent of $\u$.  Hence the minimizer is
precisely $\widehat{\x}=\I_T\widehat{\u}$ defined above.

Finally, since $r\ge s$, every $\x^*\in\mX^*$ belongs to $\mC_r$.
Therefore, we indeed have that for every $\x^*\in\mX^*$, 
\[
 \lambda\norm{\widehat{\x}-\y}^2+\R(\widehat{\x})
 \le
 \lambda\norm{\x^*-\y}^2+\R(\x^*).
\]
\end{proof}

\subsection{Proof of Lemma \ref{lem:spectral-sparse-projection}}
We first restate the lemma and then prove it.

\begin{lemma}%
In the rectangular-matrix setting, suppose
$\R(\X)=\rho\bigl(\boldsymbol{\sigma}(\X)\bigr)$,
where $\rho:\reals_+^q\rightarrow\reals\cup\{+\infty\}$ is permutation
invariant.  Fix $\Y\in\reals^{d_1\times d_2}$ and $\lambda>0$, and let
$\Y
 =
 \U\operatorname{Diag}(\boldsymbol{\sigma})\V^\top$,
 $\sigma_1\ge\cdots\ge\sigma_q\ge0$, be a SVD, where $\U$ and $\V$ have $q$ orthonormal
columns. Let $\U_r$ and $\V_r$ contain the
first $r$ columns of $\U$ and $\V$, respectively.  Let
\[
 \widehat{\u}
 \in
 \argmin_{\u\in\reals_+^r}
 \left\{
 \lambda\norm{\u-\boldsymbol{\sigma}_{1:r}}^2
 +
 \rho\!\left((\u,\mathbf{0}_{q-r})\right)
 \right\},
\]
and set
$\widehat{\X}
 :=
 \U_r\operatorname{Diag}(\widehat{\u})\V_r^\top$.

In the symmetric-matrix setting, suppose
$\R(\X)=\rho\bigl(\boldsymbol{\lambda}(\X)\bigr)$,
where $\rho:\reals^d\rightarrow\reals\cup\{+\infty\}$ is permutation
invariant and $\dom(\rho)\subseteq\reals_+^d$.  Fix
$\Y\in\mbS^d$ and $\lambda>0$, and let
$ \Y
 =
 \U\operatorname{Diag}(\boldsymbol{\gamma})\U^\top$,
 $\gamma_1\ge\cdots\ge\gamma_d$,
be an eigendecomposition.  Let $\U_r$ contain the first $r$ columns of
$\U$.  Let
\[
 \widehat{\u}
 \in
 \argmin_{\u\in\reals_+^r}
 \left\{
 \lambda\norm{\u-\boldsymbol{\gamma}_{1:r}}^2
 +
 \rho\!\left((\u,\mathbf{0}_{d-r})\right)
 \right\},
\]
and set
$\widehat{\X}
 :=
 \U_r\operatorname{Diag}(\widehat{\u})\U_r^\top$.

In either setting,
\[
 \widehat{\X}
 \in
 \argmin_{\X\in\mC_r}
 \left\{
 \lambda\norm{\X-\Y}_F^2+\R(\X)
 \right\}.
\]
Consequently, if every $\X^*\in\mX^*$ has rank at most $r$, the
corresponding construction implements a weak proximal oracle.
\end{lemma}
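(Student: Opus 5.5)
The plan is to mirror the proof of Lemma~\ref{lem:symmetric-sparse-projection}, with von Neumann's trace inequality (rectangular case) and the Ky Fan / eigenvalue trace inequality (symmetric case) playing the role of the rearrangement inequality. Fix any $\Z\in\mC_r$ with $\R(\Z)<\infty$ and write $\norm{\Z-\Y}_F^2=\norm{\Z}_F^2-2\ip{\Z}{\Y}+\norm{\Y}_F^2$. Both $\norm{\Z}_F$ and $\R(\Z)$ depend only on the spectrum of $\Z$. It therefore suffices to show that the objective restricted to $\mC_r$ is minimized by a matrix sharing singular (eigen) vectors with $\Y$, supported on the top $r$ directions.

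\emph{Rectangular case.} Let $\boldsymbol{\sigma}(\Z)=(\z_{1:r},0,\dots,0)$ with $z_1\ge\dots\ge z_r\ge0$, since $\rank(\Z)\le r$. Von Neumann gives
\[
\ip{\Z}{\Y}\le\sum_{i=1}^r z_i\sigma_i .
\]
Set $\widetilde{\Z}:=\U_r\operatorname{Diag}(\z_{1:r})\V_r^\top$. Then $\widetilde{\Z}$ has the same singular values as $\Z$, so $\R(\widetilde{\Z})=\R(\Z)$ by permutation invariance of $\rho$, and $\norm{\widetilde{\Z}}_F=\norm{\Z}_F$. Moreover $\ip{\widetilde{\Z}}{\Y}=\sum_{i\le r}z_i\sigma_i\ge\ip{\Z}{\Y}$, so $\widetilde{\Z}$ has no larger objective. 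Hence we may minimize only over matrices $\U_r\operatorname{Diag}(\u)\V_r^\top$ with $\u\in\reals_+^r$. For these matrices,
\[
\norm{\U_r\operatorname{Diag}(\u)\V_r^\top-\Y}_F^2=\norm{\u-\boldsymbol{\sigma}_{1:r}}^2+\sum_{i>r}\sigma_i^2 ,
\]
using orthonormality of the columns of $\U,\V$. Also $\R(\U_r\operatorname{Diag}(\u)\V_r^\top)=\rho((\u,\mathbf 0_{q-r}))$, where the order of the entries of $\u$ does not matter by permutation invariance. The reduced problem is exactly the one defining $\widehat{\u}$, so $\widehat{\X}$ is a minimizer over $\mC_r$.

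\emph{Symmetric case.} Since $\dom(\rho)\subseteq\reals_+^d$, any feasible $\Z\in\mC_r$ is PSD with eigenvalues $z_1\ge\dots\ge z_r\ge0$ and zeros elsewhere. The trace inequality gives
\[
\ip{\Z}{\Y}\le\sum_{i=1}^d\lambda_i(\Z)\gamma_i=\sum_{i\le r}z_i\gamma_i .
\]
Take $\widetilde{\Z}=\U_r\operatorname{Diag}(\z_{1:r})\U_r^\top$; this attains equality and preserves $\R$ and the Frobenius norm. The reduction to $\norm{\u-\boldsymbol{\gamma}_{1:r}}^2$ plus a constant then follows identically. Note that the $\gamma_i$ may be negative, but $\u\ge0$ is enforced both by the domain and in the reduced problem, so the argument is unaffected.

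\emph{Main obstacle and conclusion.} The main step needing care is the application of the trace inequalities with rank-deficient $\Z$ and the correct pairing of orderings. In the symmetric case I would invoke $\ip{\Z}{\Y}\le\sum_i\lambda_i^{\downarrow}(\Z)\lambda_i^{\downarrow}(\Y)$ and use $\lambda_i(\Z)=0$ for $i>r$; this needs $\Z\succeq0$, which holds by the domain assumption. Finally, if every $\X^*\in\mX^*$ has rank at most $r$, then $\mX^*\subseteq\mC_r$. Hence $\widehat{\X}$ satisfies \eqref{eq:wpo-def}, exactly as at the end of the proof of Lemma~\ref{lem:symmetric-sparse-projection}.
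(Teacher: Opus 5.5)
Your proof is correct and follows essentially the same route as the paper: von Neumann's inequality (resp.\ the eigenvalue trace inequality, with $\Y$'s eigenvalues possibly negative and positive semidefiniteness coming from the domain of $\rho$) lets you replace any rank-$r$ candidate by one aligned with the top-$r$ singular (eigen) vectors of $\Y$, after which the problem reduces to the $r$-dimensional scalar problem. The only minor difference is that the paper adds a separate rearrangement-inequality step to show the nonincreasing ordering of the diagonal entries is redundant. You avoid that step by noting that $\U_r\operatorname{Diag}(\u)\V_r^\top$ is itself a rank-$r$ candidate for every $\u\in\reals_+^r$ and that $\rho$ is permutation invariant, which is equally valid and slightly more direct.
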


\begin{proof}
We first consider the rectangular-matrix setting.  Fix any
$\X\in\mC_r$, and let
$\tau_1\ge\cdots\ge\tau_r\ge0$ denote its nonzero singular values, padded
with zeros if $\rank(\X)<r$.  Define
$\widetilde{\X}
 :=
 \U_r\operatorname{Diag}(\boldsymbol{\tau}_{1:r})\V_r^\top$.
The matrices $\X$ and $\widetilde{\X}$ have the same singular values, and
therefore
$\R(\widetilde{\X})=\R(\X)$.
Moreover, von Neumann's trace inequality gives
$\langle{\X,\Y\rangle}
 \le
 \sum_{i=1}^r\tau_i\sigma_i
 =
 \langle{\widetilde{\X},\Y}\rangle$.
Since $\X$ and $\widetilde{\X}$ also have the same Frobenius norm,
$\Vert{\widetilde{\X}-\Y}\Vert_F^2
 \le
 \Vert{\X-\Y}\Vert_F^2$.
Thus, every rank-$r$ candidate can be replaced, without increasing the
proximal objective, by one whose singular vectors agree with the leading
singular vectors of $\Y$ and whose diagonal entries are nonincreasing.

It remains to observe that the ordering constraint on these diagonal entries
is redundant.  For any $\u\in\reals_+^r$, let $\u^\downarrow$ denote the
vector obtained by sorting its entries in nonincreasing order.  Permutation
invariance of $\rho$ gives
$\rho\!\left((\u^\downarrow,\mathbf{0}_{q-r})\right)
 =
 \rho\!\left((\u,\mathbf{0}_{q-r})\right)$.
Since $\boldsymbol{\sigma}_{1:r}$ is nonincreasing, the rearrangement
inequality gives
$\langle{\u^\downarrow, \boldsymbol{\sigma}_{1:r}}\rangle
 \ge
 \langle{\u, \boldsymbol{\sigma}_{1:r}}\rangle$,
and hence
$\norm{\u^\downarrow-\boldsymbol{\sigma}_{1:r}}^2
 \le
 \norm{\u-\boldsymbol{\sigma}_{1:r}}^2$.
Therefore, minimizing over all $\u\in\reals_+^r$ gives the same optimal value
as minimizing under the additional constraint
$u_1\ge\cdots\ge u_r\ge0$.

Finally, for every $\u\in\reals_+^r$, permutation invariance of $\rho$ and
orthogonality of the singular vectors give
\[
 \R\!\left(\U_r\operatorname{Diag}(\u)\V_r^\top\right)
 =
 \rho\!\left((\u,\mathbf{0}_{q-r})\right)
\]
and
\[
 \Vert{\U_r\operatorname{Diag}(\u)\V_r^\top-\Y}\Vert_F^2
 =
 \norm{\u-\boldsymbol{\sigma}_{1:r}}^2
 +
 \sum_{i=r+1}^q\sigma_i^2.
\]
The last sum is independent of $\u$.  Hence the scalar problem defining
$\widehat{\u}$ is exactly the remaining part of the rank-$r$ proximal
problem, and $\widehat{\X}$ is a minimizer.

We next consider the symmetric-matrix setting.  Fix any $\X\in\mC_r$.
Since $\dom(\R)\subseteq\mbS_+^d$, the matrix $\X$ is positive
semidefinite.  Let
$\tau_1\ge\cdots\ge\tau_r\ge0$ denote its nonzero eigenvalues, padded with
zeros if $\rank(\X)<r$, and define
$ \widetilde{\X}
 :=
 \U_r\operatorname{Diag}(\boldsymbol{\tau}_{1:r})\U_r^\top$.
The matrices $\X$ and $\widetilde{\X}$ have the same eigenvalues, and
therefore
$\R(\widetilde{\X})=\R(\X)$.
The trace inequality for symmetric matrices gives
$\langle{\X,\Y}\rangle
 \le
 \sum_{i=1}^r\tau_i\gamma_i
 =
 \langle{\widetilde{\X},\Y}\rangle$.
Since $\X$ and $\widetilde{\X}$ also have the same Frobenius norm,
$\Vert{\widetilde{\X}-\Y}\Vert_F^2
 \le
 \Vert{\X-\Y}\Vert_F^2$.
Thus, every rank-$r$ candidate can be replaced, without increasing the
proximal objective, by one whose eigenvectors agree with the leading
eigenvectors of $\Y$ and whose nonzero eigenvalues are nonincreasing.

As above, the ordering constraint is redundant.  For any
$\u\in\reals_+^r$, permutation invariance of $\rho$ and the rearrangement
inequality give
$\rho\!\left((\u^\downarrow,\mathbf{0}_{d-r})\right)
 =
 \rho\!\left((\u,\mathbf{0}_{d-r})\right)$
and
$ \norm{\u^\downarrow-\boldsymbol{\gamma}_{1:r}}^2
 \le
 \norm{\u-\boldsymbol{\gamma}_{1:r}}^2$.
Moreover,
\[
 \R\!\left(\U_r\operatorname{Diag}(\u)\U_r^\top\right)
 =
 \rho\!\left((\u,\mathbf{0}_{d-r})\right)
\]
and
\[
 \Vert{\U_r\operatorname{Diag}(\u)\U_r^\top-\Y}\Vert_F^2
 =
 \norm{\u-\boldsymbol{\gamma}_{1:r}}^2
 +
 \sum_{i=r+1}^d\gamma_i^2.
\]
The last sum is independent of $\u$.  Hence, the scalar problem defining
$\widehat{\u}$ is exactly the remaining part of the rank-$r$ proximal
problem, and $\widehat{\X}$ is again a minimizer.

Finally, if every $\X^*\in\mX^*$ has rank at most $r$, then
$\X^*\in\mC_r$, and therefore, in either setting,
\[
 \lambda\Vert{\widehat{\X}-\Y}\Vert_F^2+\R(\widehat{\X})
 \le
 \lambda\Vert{\X^*-\Y}\Vert_F^2+\R(\X^*).
\]
\end{proof}

\bibliography{bibs}
\bibliographystyle{plain}

\end{document}